\def\mc#1{\mathcal {#1}}
\def\A{\mc A}
\def\B{\mc B}
\def\C{\mc C}
\def\E{\mc E}
\def\I{\mc I}
\def\J{\mc J}
\def\M{\mc M}
\def\X{\mc X}
\def\hs#1{\hskip#1mm}
 \author{Walter Tholen and Leila Yeganeh}
\address{Department of Mathematics and Statistics\\ York University, Toronto, Canada}
 \title{The comprehensive factorization of Burroni's $\mathbb T$-functors}
 \keywords{$\mathbb T$-graph, $\mathbb T$-category, $\mathbb T$-functor, discrete cofibration, comprehensive factorization, multicategory, perfect map, wide pullback, small-topological functor}
\begin{document}

 \maketitle


 \bigskip

 \begin{abstract} erExpanding on the comprehensive factorization of functors internal to a category $\C$, under fairly mild conditions on a monad $\mathbb T$ on $\C$ we establish that this orthogonal factorization system exists even in Burroni's category ${\sf Cat}(\mathbb T)$ of (internal) $\mathbb T$-categories and their functors. This context provides for some expected applications and some unexpected connections. For example, it lets us deduce that the comprehensive factorization is also available for functors of Lambek's multicategories. In topology, it leads to the insight that the role of discrete cofibrations is played by perfect maps, with the comprehensive factorization of a continuous map of Tychonoff spaces given by its fibrewise compactification.
\end{abstract}

 \section{Introduction}
There are two important factorization systems in seemingly disjoint environments that may serve as a motivation for the general results of this paper.

 First, in category theory, as recorded in \cite{Gray1969}, it is due to Bill Lawvere to interpret the set-theoretic comprehension schema (whereby, roughly, given a property, there is a set consisting exactly of the elements having that property) as the adjunction
\begin{center}
$\xymatrix{{\bf Set}/B\ar@/^8pt/[rr] &\perp & 2^B\ar@/^6pt/[ll]
}$	
\end{center}
whose left adjoint associates with a $B$-valued map (the characteristic function of) its image. Lawvere's general categorical account of the comprehension schema appeared in \cite{Lawvere1970}. But already in his article, replacing the set $B$ above by a (small) category $\B$, John Gray established a categorification of the above adjunction,
\begin{center}
$\xymatrix{{\bf Cat}/\B \ar@/^8pt/[rr] &\perp & {\bf Cat}^{\B}\;,\ar@/^6pt/[ll]}$	
\end{center}
as follows. Its right adjoint embeds ${\bf Cat}^{\B}$ fully into ${\bf Cat}/\B$ by the dual Grothendieck construction, considering ${\bf Cat}$-valued functors on $\B$ equivalently as cofibred categories over $\B$. The left adjoint assigns to an object $F:\A\longrightarrow\B$
of ${\bf Cat}/\B$ the functor $\B\longrightarrow{\bf Cat}$ which maps a $\B$-object $B$ to the comma category $F/B$. It may equivalently be described as the left Kan extension along $F$ of the functor $\A\longrightarrow{\bf Cat}$ with constant value the terminal category $\sf 1$. The left adjoint $\pi_0$ of the ``discrete embedding" $\bf Set\hookrightarrow\bf Cat$ allows one to ``scale back" the codomain of the Gray adjunction to $\bf Set^{\B}$, in the form of the composite adjunction
\begin{center}
$\xymatrix{{\bf Cat}/\B \ar@/^8pt/[rr] &\perp & {\bf Cat}^{\B}\ar@/^6pt/[ll]\ar@/^8pt/[rr]^{\pi_0(-)} & \perp&{\bf Set}^{\B}\;,\ar@/^6pt/[ll]
}$
\end{center}
for which Gray credits Lawvere.
Its right adjoint presents the $\bf Set$-valued functors on $\B$ equivalently as {\em discrete (Grothendieck) cofibrations} over $\B$ (nowadays more often called discrete opfibrations over $\B$). Since the left-adjoint functor $\pi_0$ preserves the left Kan extension above, the left adjoint of the composite adjunction assigns to a $\B$-valued functor $F$ the left Kan extension along $F$ of the functor $\A\longrightarrow\bf Set$ with constant value the singleton set $1$. As a pointwise left Kan extension, it may be computed as
\begin{center}
$\xymatrix{\B\ar[rr]^{{\rm Yoneda}\quad} && {\bf Set}^{\B^{\rm op}}\ar[rr]^{F^{\rm op}(-)} && {\bf Set}^{\A^{\rm op}}\ar[rr]^{\rm colim} && {\bf Set}\;,
}$
\hfil
$B\mapsto {\rm colim}\,\B(F-,B)\,.$	
\end{center}
Considering its ``category of elements" $\E$ with its discrete cofibration $P:\E \longrightarrow \B$, one sees that the composite adjunction lets $F$ factor universally through $P$. The celebrated paper \cite{StreetWalters1973} shows that one obtains an orthogonal factorization system in $\bf Cat$.

 There are various generalized categorical contexts in which the Street-Walters comprehensive factorization has been established. We refer the reader particularly to the fairly recent paper \cite{BergerKaufmann2017}. The most important generalization with respect to this work is the fact that it may be established for functors internal to a category with pullbacks and reflexive coequalizers, these having to be stable under pullback: see \cite{Johnstone2002}.

 Here is our second motivating example of an important factorization system, this one appearing in general topology. It is well known (see \cite{Whyburn1966}) that a continuous map $f:X\longrightarrow Y$ of Tychonoff (= completely regular Hausdorff) spaces may be universally factored through a {\em perfect} map $p:E\longrightarrow Y$. For arbitrary spaces, perfect maps are best defined as stably closed (= proper \cite{Bourbaki1989}) and Hausdorff-separated (\cite{James1989}) continuous maps. In the Tychonoff context they are characterized as the maps for which their Stone-$\rm{\check{C}}$ech naturality squares are pullback diagrams (\cite{HenriksenIsbell1958}, \cite{Herrlich1972}). This characterization suggests to factor $f$ in the general style of \cite{Ringel1970} and \cite{CassidyHebertKelly1985}, by taking $p$ to be the pullback of $\beta f$ along the the Stone-$\rm{\check{C}}$ech compactification of $Y$, as shown in the diagram
\begin{center}
$\xymatrix{X\ar@/^12pt/[drrr]^f\ar@/_15pt/[ddr]_{\beta_X}\ar@{-->}[dr]^r & & &\\
& E=\beta X\times_{\beta Y}Y\ar[d]\ar[rr]^{\quad p} && Y\ar[d]^{\beta_Y}\\
& \beta X\ar[rr]^{\beta f} && \beta Y
}$	
\end{center}
The paper \cite{Tholen1999} provides a very general categorical context in which the universal factorization through a perfect morphism may be established in this fashion.

 In this paper we provide a common generalization for the notions of discrete cofibrations of categories and perfect maps of topological spaces, under the roof of Burroni's $\mathbb T$-categories and their functors (\cite{Burroni1971}; see also \cite{Hermida2000}, \cite{Zawadowski2011}). For $\mathbb T$ the identity monad of a category $\C$ with pullbacks, they generalize categories and functors internal to $\C$, but for an arbitrary monad $\mathbb T$ they reach far beyond that context. In particular, as observed already by Burroni, for $\mathbb T$ the ultrafilter monad on $\bf Set$, {\em ordered} $\mathbb T$-categories describe topological spaces in terms of ultrafilter convergence, as presented in \cite{Barr1970}, following Manes' equational presentation of compact Hausdorff spaces \cite{Manes1969}. Our principal result establishes a generalization of the comprehensive factorization for internal functors ($\mathbb T={\rm Id}_{\C}$) to $\mathbb T$-functors, under fairly mild conditions on a general monad $\mathbb T$ on $\C$. For a category $\C$ with pullbacks and coequalizers of reflexive pairs, these being stable under pullback, we require only that the endofunctor $T$ of $\C$ preserve the reflexive coequalizers. To our surprise, no preservation of pullbacks by $T$, or any further ``cartesianness" of $\mathbb T$, is needed for the establishment of the {\em Comprehensive Factorization Theorem (CFT)}, as in Theorem \ref{compfact}.

 Our proof of this theorem is quite intricate and long, as we skip only very few routine details in our argumentation. In fact, we start off with a very elaborate and detailed presentation of $\mathbb T$-categories, not assuming any familiarity by the reader with Burroni's work (Section 2), followed by the mentioning of the principal examples in Section 3 which, as another pillar, include Lambek's multicategories (\cite{Lambek1969}; see also \cite{Leinster2004}. In Theorem \ref{topologicity} we expand on Burroni's original presentation of the category ${\sf Cat}(\mathbb T)$ of $\mathbb T$-categories as a fibred category over $\C$ and, generalizing a result of \cite{Hoffmann1972} for $\C={\bf Set}$, show that the ``object-of-objects" functor to $\C$ is actually {\em small-topological} when $\C$ is complete. This theorem gives, in particular, a general guide on how to compute limits in ${\sf Cat}(\mathbb T)$.

 For the CFT, however, in the absence of the preservation of pullbacks in $\C$ by $T$, we need a presentation of pullbacks in ${\sf Cat}(\C)$ that is more explicit than the one provided by Theorem \ref{topologicity}, in order to be able to deduce easily from it the pullback stability of the class of perfect $\mathbb T$-functors (= discrete cofibrations). This presentation is provided in Proposition \ref{pbstability} and its long proof. Generally, the pullback stability is one of the three needed ingredients for a class $\M$ of morphisms in any category $\A$ to possess an orthogonal
factorization partner $\E$. The other two are the closure of $\M$ under composition and the existence of an adjunction
\begin{center}
$\xymatrix{\A/B\ar@/^8pt/[rr] & \perp &\M/B\ar@/^6pt/[ll]
}$	
\end{center}
for every object $B$, where the left adjoint is the obvious full embedding into the comma category $\A/B$; see Proposition \ref{factsystems}. For $\A={\sf Cat}(\mathbb T)$ and $\M=\{$perfect $\mathbb T$-functors$\}$, this leaves us with having to establish this adjunction, just as in the two guiding examples mentioned above, in order to complete the proof of the CFT. This is done in Theorem \ref{reflective}, which carries the main burden toward the proof of the CFT.

 Our presentation is strictly one-dimensional. For stepping into the next dimension and present ${\sf Cat}(\mathbb T)$ as a 2-category, one needs the preservation of pullbacks in $\C$ by the endofunctor $T$. The proof is included in the forthcoming paper \cite{TholenYeganeh2021}.

\medskip
{\em Acknowledgement} We thank Dirk Hofmann for providing the example recorded in Examples \ref{examplefacts}(3).

\section{Preliminaries}
{\em Throughout the paper, we work in a category $\C$ with pullbacks and let $\mathbb T$ be a monad on $\C$}.

 \medskip
Recall that a {\em monad} $\mathbb{T} = (T, \eta, \mu)$ on $\C$ is given by a functor $\xymatrix{T:\C\ar[r]&\C}$ and natural transformations $\xymatrix{\mu:TT\ar[r]&T}$ and $\xymatrix{\eta:{\rm Id}_\C\ar[r]&T}$, called the multiplication and the unit of $\mathbb T$ respectively, satisfying
	$$
	\mu\cdot T\eta=T=\mu\cdot \eta T \qquad\text{and} \qquad \mu\cdot T\mu=\mu \cdot \mu T.
	$$
\begin{center}
	$\xymatrix{T\ar[rrd]_{1_T}\ar[rr]^{\eta T}&&TT\ar[d]^{\mu}&&T\ar[ll]_{T\eta}\ar[lld]^{1_T}\\ &&T&&}\qquad \qquad \xymatrix{TTT\ar[d]_{T\mu}\ar[rr]^{\mu T}&&TT\ar[d]^{\mu}\\ T\ar[rr]_{\mu}&&T}$
\end{center}
A $\mathbb{T}$-{\em algebra} $(A, a)$ is an object $A$ of $\C$ equipped with a morphism
$\xymatrix{a:TA\ar[r]&A}$ satisfying
$$
1_{A}=a\cdot \eta_{A}\qquad \text{and} \qquad a\cdot Ta=a\cdot \mu_{A}.
$$
A $\mathbb{T}$-{\em homomorphism} $\xymatrix{h:(A, a)\ar[r]&(B, b)}$ is a $\C$-morphism $\xymatrix{f:A\ar[r]&B}$ such that
\begin{center}
$f\cdot a=b\cdot Tf$
\end{center}
The category ${\sf Alg}(\mathbb T)$ of $\mathbb{T}$-algebras and $\mathbb{T}$-homomorphisms, often denoted by $\C^{\mathbb{T}}$ is known as the {\em Eilenberg-Moore category of} $\mathbb{T}$ (\cite{Mac Lane}).

\medskip

 Fixing our notation, we first recall Burroni's fundamental definitions \cite{Burroni1971}.
\begin{itemize}
\item[1.] A $\mathbb T$-{\em graph} $A=(A_{0}, A_{1}, d_{0}^A, c_{0}^A)$ is simply a span
\begin{center}
$\xymatrix{ & A_1\ar[ld]_{d_0^A}\ar[rd]^{c_0^A} &\\
TA_0 & & A_0
}$	
\end{center}
of morphisms in $\C$. One calls $A_0$ the {\em object of objects} (or of {\em vertices}) of $A$ and $A_1$ the {\em object of morphisms} (or of {\em edges}) of $A$; $d_0=d_0^A$ is its {\em domain} (or {\em source}) {\em morphism} and $c_0=c_0^A$ its {\em codomain} (or {\em target}) {\em morphism}.

 A {\em morphism
$\xymatrix{f=(f_0, f_1):A\ar[r]&B}$
of $\mathbb T$-graphs} has an {\em object part} $f_0:A_0\longrightarrow B_0$ and a {\em morphism part} $f_1:A_1\longrightarrow B_1$ such that the diagram
\begin{center}
	$\xymatrix{&A_{1}\ar[ld]_{d_{0}^A}\ar[dd]_{f_1}\ar[rd]^{c_{0}^A}&\\
	TA_{0}\ar[dd]_{Tf_{0}}&&A_{0}\ar[dd]^{f_{0}}\\
	& B_1\ar[ld]_{d_{0}^B}\ar[rd]^{c_{0}^B}	 & \\
	TB_{0}&&B_{0}\\
	 }$
\end{center}
commutes in $\C$. Composed as in $\C$, the morphisms give us the category
$${\sf Gph}(\mathbb T)$$ of $\mathbb T$-graphs in $\C$, which comes with the (not necessarily faithful) {\em object-of-objects functor}
$(-)_0: {\sf Gph}(\mathbb T)\longrightarrow \C$.
\item[2.] Trivially, every object $X$ in $\C$ gives rise to the {\em discrete} $\mathbb T$-graph
$DX=(X,X,\eta_{X},1_{X}),$
and every $\C$-morphism $h:X\longrightarrow Y$ gives a morphism $Dh=(h,h):DX\longrightarrow DY$ of $\mathbb T$-graphs. 
For a $\mathbb T$-graph $A$, we write $$D_0A:=D(A_0)=(A_0,A_0,\eta_{A_0},1_{A_0}),$$ and likewise for morphisms.

 A {\em pointed $\mathbb T$-graph} $(A,i^A)$ is a $\mathbb T$-graph $A$ equipped
with a morphism $$\xymatrix{(1_{A_0},i^A):D_0A\ar[r]&A}$$ of $\mathbb T$-graphs. Hence, the {\em insertion of identities} $i=i^A$ is simply a $\C$-morphism $i:A_0\longrightarrow A_1$ satisfying
\begin{equation}
d_0\cdot i=\eta_{A_0}\qquad\text{and}\qquad c_0\cdot i=1_{A_0}.
\end{equation}
\begin{center}
	$\xymatrix{&A_{0}\ar[ld]_{\eta_{A_0}}\ar[dd]_{i}\ar@{=}[rd]&\\
	TA_{0}\ar@{=}[dd]&&A_{0}\ar@{=}[dd]\\
	& A_1\ar[ld]_{d_{0}}\ar[rd]^{c_{0}}	 & \\
	TA_{0}&&A_{0}\\
	 }$
\end{center}
A {\em morphism $f:(A,i^A)\longrightarrow(B,i^B)$ of pointed ${\mathbb T}$-graphs} is a morphism $f:A\longrightarrow B$ of $\mathbb T$-graphs such that
\begin{center}
	$\xymatrix{D_0A\ar[r]^{D_0f}
	\ar[d]_{(1_{A_0},i^A)}&D_0B\ar[d]^{(1_{B_0},i^B)}\\ A\ar[r]_{f}&B}$
\end{center}
commutes in ${\sf Gph}({\mathbb T})$; this simply means $f_1\cdot i^A=i^B\cdot f_0$ in $\C$. The pointed graphs and their morphisms form the category
$${\sf Gph}_{\bullet}(\mathbb T).$$ 

 Note that the discrete $\mathbb T$-graph $DX$ is pointed, by $1_X$, and that one has an adjunction
$$D\dashv(-)_0:{\sf Gph}_{\bullet}(\mathbb T)\longrightarrow\C;$$
in fact $D$ is left-adjoint right-inverse to $(-)_0$.
\item[3.]
To be able to consider a categorical composition on a pointed $\mathbb T$-graph $(A,i)$, one forms the object $A_2=TA_1\times_{TA_0}A_1$ of {\em composable pairs} and then the object $A_3=TA_2\times_{TA_1}A_2$ of {\em composable triples} of $A$, via the pullback diagrams
\begin{center}
$\xymatrix{TA_1\ar[d]_{Tc_0} & A_2\ar[d]^{c_1}\ar[l]_{d_1}\\
TA_0 & A_1\ar[l]^{d_0}\\
}$
\hfil
$\xymatrix{TA_2\ar[d]_{Tc_1} & A_3\ar[d]^{c_2}\ar[l]_{d_2}\\
TA_1 & A_2\ar[l]^{d_1}\\
}$	\end{center}
which also define the projections $d_{\nu+1},c_{\nu+1}$ satisfying $Tc_{\nu}\cdot d_{\nu+1}=d_{\nu}\cdot c_{\nu+1}\;(\nu=0,1)$. They induce the identity insertions $i_1=Ti\times 1_{A_1}$ and $i_2=\eta_{A_1}\times i$ rendering the diagrams
\begin{center}
$\xymatrix{&TA_{0}\ar@{=}[dd]\ar[ld]_{Ti}&&A_{1}\ar[ll]_{d_{0}}\ar[ld]_{i_{1}}\ar@{=}[dd]\\ TA_{1}\ar[dd]_{Tc_{_0}}&&A_{2}\ar[ll]_{\hs{12}d_{1}}\ar[dd]^{ c_{1}}&\\ &TA_{0}\ar@{=}[ld]&&A_{1}\ar[ll]_{\hs{-8}d_{0}}\ar@{=}[ld]\\ TA_{0}&&A_{1}\ar[ll]_{ d_{0}}&} \qquad
\xymatrix{&A_{1}\ar@{->}[dd]_{\hs{-6}c_{0}}\ar[ld]_{\eta_{A_{1}}}&&A_{1}\ar@{=}[ll]\ar[ld]_{i_{2}}\ar@{->}[dd]^{c_{0}}\\ TA_{1}\ar[dd]_{Tc_{_0}}&&A_{2}\ar[ll]_{\hs{12}d_{1}}\ar[dd]^{ c_{1}}&\\ &A_{0}\ar@{->}[ld]_{\eta_{A_{0}}}&&A_{0}\ar@{=}[ll]\ar@{->}[ld]^{i}\\ TA_{0}&&A_{1}\ar[ll]_{ d_{0}}&} $
\end{center}
commutative, so that
\begin{equation}
d_1\cdot i_1=Ti\cdot d_0,\quad c_1\cdot i_1=1_{A_1},\quad d_1\cdot i_2=\eta_{A_1},\quad c_1\cdot i_2=i\cdot c_0.
\end{equation}
With the $\mathbb T$-graph $$D_2A:=(A_0,\,A_2,\,\mu_{A_0}\cdot Td_0\cdot d_1,\, c_0\cdot c_1)$$ one obtains the morphisms
$$(1_{A_0},i_{\nu}):A\longrightarrow D_2A\quad(\nu=1,2)$$
of $\mathbb T$-graphs. In fact, one has the
derived equality $i_1\cdot i=i_2\cdot i$, and this composite morphism makes the $\mathbb T$-graph $D_2A$ pointed since the diagram
\begin{center}
	$\xymatrix{&A_{0}\ar[ld]_{\eta_{A_0}}\ar[dd]_{i_1\cdot i=}^{i_2\cdot i}\ar@{=}[rd]&\\
	TA_{0}\ar@{=}[dd]&&A_{0}\ar@{=}[dd]\\
	& A_2\ar[ld]^{\mu_{A_0}\cdot Td_0\cdot d_1}\ar[rd]^{c_{0}}	 & \\
	TA_{0}&&A_{0}\\
	 }$
\end{center}
commutes; furthermore, the morphisms $(1_{A_0}, i_{\nu})$ live in ${\sf Gph}_{\bullet}(\mathbb T)$.
\item[4.]
A {\em $\mathbb T$-category} $(A,i^A,m^A)$ is a pointed ${\mathbb T}$-graph $(A,i^A)$ which comes with a morphism $$(1_{A_0},m^A):D_2A\longrightarrow A$$ of $\mathbb T$-graphs so that the {\em composition morphism} $m=m^A:A_2\longrightarrow A_1$ satisfies the neutrality and associativity laws listed as (4) and (7) below. Asking $(1,m)$ to be a morphism of graphs amounts to asking $m$ to satisfy the equalities
\begin{equation}
d_0\cdot m=\mu_{A_{0}}\cdot Td_{0}\cdot d_{1} \quad\text{and}\quad c_0\cdot m= c_0\cdot c_1.
\end{equation}

 \begin{center}
	$\xymatrix{&A_{2}\ar[ld]_{d_{1}}\ar[dd]_{m}\ar[rd]^{c_{1}}&\\
	TA_{1}\ar[dd]_{\mu_{A_0}\cdot Td_{0}}&&A_{0}\ar[dd]^{c_{0}}\\
	& A_1\ar[ld]_{d_{0}}\ar[rd]^{c_{0}}	 & \\
	TA_{0}&&A _{0}\\
	 }$
\end{center}
The {\em neutrality law }says that $(1_{A_0},m)$ must be a common retraction to $(1_{A_0}, i_1)$ and $(1_{A_0},i_2)$ in ${\sf Gra}(\mathbb T)$, that is: one must have
\begin{equation}
m\cdot i_{1}=1_{A_{1}}=m\cdot i_{2}.
\end{equation}
in $\C$.
Note that, in fact, $(1_{A_0},m)$ lives, like the morphisms $(1_{A_0},i_{\nu})$, in ${\sf Gph}_{\bullet}(\mathbb T)$.

To formulate the associativity law, one forms the morphisms
$m_1=Tm\times c_1$ and $ m_2=(\mu_{A_1}\cdot Td_1)\times m$, as uniquely defined by the commutative diagrams
\begin{center}

 	$\hs{-7}\xymatrix{&TA_{2}\ar@{->}[dd]_{Tc_{1}}\ar[ld]_{Tm}&&A_{3}\ar[ll]_{d_{2}}\ar[ld]_{m_{1}}\ar@{->}[dd]^{c_{2}}\\
	TA_{1}\ar[dd]_{Tc_{_0}}&&A_{2}\ar[ll]_{\hs{12}d_{1}}\ar[dd]^{ c_{1}}&\\
	 &TA_{1}\ar@{->}[ld]_{Tc_{0}}&&A_{2}\ar[ll]_{d_{1}\quad\qquad}\ar@{->}[ld]^{c_{1}}\\
	TA_{0}&&A_{1}\ar[ll]_{ d_{0}}&}$\qquad\quad
$\hs{-7}\xymatrix{&TA_{2}\ar@{->}[dd]_{Tc_{1}}\ar[ld]_{\mu_{A_1}\cdot Td_1}&&A_{3}\ar[ll]_{d_{2}}\ar[ld]_{m_{2}}\ar@{->}[dd]^{c_{2}}\\
	TA_{1}\ar[dd]_{Tc_{_0}}&&A_{2}\ar[ll]_{\hs{12}d_{1}}\ar[dd]^{ c_{1}}&\\
	 &TA_{1}\ar@{->}[ld]^{\mu_{A_0}\cdot Td_{0}}&&A_{2}\ar[ll]_{d_{1}\qquad\quad}\ar@{->}[ld]^{m}\\
	TA_{0}&&A_{1}\ar[ll]_{\qquad\qquad d_{0}}&}$
\end{center}
Hence, their defining conditions are
\begin{equation}
d_1\cdot m_1=Tm\cdot d_2,\,c_1\cdot m_1=c_1\cdot c_2\quad\text{and}\quad d_1\cdot m_2=\mu_{A_1}\cdot Td_1\cdot d_2,\, c_1\cdot m_2=m\cdot c_2\,.	
\end{equation}
Like for $i_1,i_2$, the morphisms $m_1, m_2$ may be seen as belonging to the morphisms
$$(1_{A_0}, m_{\nu}):D_3A\longrightarrow D_2A\quad(\nu=1,2)$$
of $\mathbb T$-graphs, with
$$ D_3A:= (A_0,\,A_3,\,\mu_{A_0}\cdot\mu_{TA_0}\cdot TTd_0\cdot Td_1\cdot d_2,\, c_0\cdot c_1\cdot c_2).$$
We note in passing that $D_3A$ is, like $D_2A$, pointed, by the morphism $\overline{i}: A_0\longrightarrow A_3$ that is determined by the conditions
\begin{equation}
	d_2\cdot\overline{i}=\eta_{A_2}\cdot i_1\cdot i\quad\text{and}\quad c_2\cdot\overline{i}=i_1\cdot i\; (= i_2\cdot i)\,.
\end{equation}
It makes the morphisms $(1_{A_0}, m_1),\,(1_{A_0},m_2)$ live in ${\sf Gph}_{\bullet}(\mathbb T)$. The {\em associativity law} says that these morphisms are invariant under post-composition with $(1_{A_0},m)$, which simply means
\begin{equation}
m\cdot m_1	=m\cdot m_2\;.
\end{equation}
Briefly then, a $\mathbb T$-category $(A,i,m)$ must satisfy conditions (1--7) (of which (6) may be considered redundant).

 We usually write just $A$ for a $\mathbb T$-category $(A,i^A,m^A)$.
\item[5.] A morphism $f:A\longrightarrow B$ of $\mathbb T$-categories, or a {\em $\mathbb T$-functor}, is a morphism of pointed ${\mathbb T}$-graphs preserving the composition, as in (9) below. To specify this preservation condition one considers the morphism $f_2=Tf_1\times f_1$, and for later use forms also $f_3=Tf_2\times f_2$, determined by the commutative diagrams
\begin{center}
	$\xymatrix{&TA_{1}\ar@{->}[dd]_{Tc_{0}}\ar[ld]_{Tf_1}&&A_{2}\ar[ll]_{d_{1}}\ar[ld]_{f_{2}}\ar@{->}[dd]^{c_{1}}\\ TB_{1}\ar[dd]_{Tc_{_0}}&&B_{2}\ar[ll]_{\hs{8}d_{1}}\ar[dd]^{ c_{1}}&\\ &TA_{0}\ar@{->}[ld]_{Tf_{0}}&&A_{1}\ar[ll]_{\hs{-6}d_{0}}\ar@{->}[ld]^{f_{1}}\\ TB_{0}&&B_{1}\ar[ll]_{ d_{0}}&}\qquad \xymatrix{&TA_{2}\ar@{->}[dd]_{Tc_{1}}\ar[ld]_{Tf_2}&&A_{3}\ar[ll]_{d_{2}}\ar[ld]_{f_{3}}\ar@{->}[dd]^{c_{2}}\\ TB_{2}\ar[dd]_{Tc_{1}}&&B_{3}\ar[ll]_{\hs{8}d_{2}}\ar[dd]^{ c_{2}}&\\ &TA_{1}\ar@{->}[ld]_{Tf_{1}}&&A_{2}\ar[ll]_{\hs{-6}d_{1}}\ar@{->}[ld]^{f_{2}}\\ TB_{1}&&B_{2}\ar[ll]_{ d_{1}}&} $
\end{center}
so that, together with the condition that $f$ be a morphism of $\mathbb T$-graphs, one has
\begin{equation}
	d_{\nu}^B\cdot f_{\nu+1}=Tf_{\nu}\cdot d_{\nu}^A,\;c_{\nu}^B\cdot f_{\nu+1}=f_{\nu}\cdot c_{\nu}^A\quad(\nu=0,1,2).
\end{equation}
We note in passing that also for $\nu=1,2$ one obtains morphisms $$D_{\nu}f:=(f_0,f_{\nu}):D_{\nu}A\longrightarrow D_{\nu}B$$ of pointed $\mathbb T$-graphs. In addition to the preservation of the pointing, as shown by the commutative square on the left below, the preservation of the composition says that the square on the right must commute as well:
\begin{center}
	$\xymatrix{D_0A\ar[r]^{D_0f}
	\ar[d]_{(1_{_0},i^A)}&D_0B\ar[d]^{(1_{B_0},i^B)}\\ A\ar[r]_{f}&B}$
	\hfil
	$\xymatrix{D_2A\ar[r]^{D_2f}\ar[d]_{(1_{A_0},m^A)} & D_2B\ar[d]^{(1_{B_0},m^B)}\\
	A\ar[r]_f & B\\
	}$
\end{center}
This simply means that we must have the equalities
\begin{equation}
f_1\cdot i^A=i^B\cdot f_0\quad\text{and}\quad f_1\cdot m^A=m^B\cdot f_2.	
\end{equation}
For later use we note that these conditions imply the identities
\begin{equation}
f_2\cdot i_1^A=i_1^B\cdot f_1,\;f_2\cdot i_2^A=	i_2^B\cdot f_1,\;f_2\cdot m_1^A=m_1^B\cdot f_3,\;f_2\cdot m_2^A=m_2^B\cdot f_3\;.
\end{equation}

 \medskip

 With the composition of morphisms of $\mathbb T$-graphs we obtain the category
$${\sf Cat}(\mathbb T)$$
of $\mathbb T$-categories and $\mathbb T$-functors.
\item[6.] A $\mathbb T$-category $A$ is {\em ordered} if its span $(d_0,c_0)$ is monic, so that
$d_0\cdot x=d_0\cdot y$ and $c_0\cdot x=c_0\cdot y$ implies $x=y$, for all parallel morphisms $x,y$ with codomain $A_1$.
Trivially then, by (1) and (3), a $\mathbb T$-category structure $i, m$ of $A$ is uniquely determined by its $\mathbb T$-graph structure $d_0,c_0$. Hence, for a $\mathbb T$-graph $A$ with $(d_0,c_0)$ monic to become a $\mathbb T$-category is merely a property, with no choice of how to add the $\mathbb T$-category structure. Likewise, for a $\mathbb T$-functor $f:A\longrightarrow B$, by (8) (for $\nu=0$), the object part $f_0$ determines the morphism part $f_1$ whenever the $\mathbb T$-category $B$ is ordered.
We denote by
$${\sf Ord}(\mathbb T)$$
the category of ordered $\mathbb T$-categories and their $\mathbb T$-functors.
\end{itemize}

\begin{proposition}\label{chain}
The forgetful functor of the Eilenberg-Moore category of $\mathbb T$ to $\C$ decomposes as
\begin{center}
$\xymatrix{{\sf Alg}({\mathbb T})\ar@{^(->}[r] & {\sf Ord}(\mathbb T)\ar@{^(->}[r] & {\sf Cat}(\mathbb T)\ar[r] & {\sf Gph}_{\bullet}(\mathbb T)\ar[r] & {\sf Gph}(\mathbb T)\ar[r]^{\quad(-)_0} & \C\;,
}$	
\end{center}
where $\hookrightarrow$	denotes a full embedding, and where all functors but $(-)_0$ are faithful.
\end{proposition}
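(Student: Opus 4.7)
The plan is to treat the six functors separately. The three rightmost arrows $${\sf Cat}(\mathbb T)\longrightarrow{\sf Gph}_{\bullet}(\mathbb T)\longrightarrow{\sf Gph}(\mathbb T)\longrightarrow\C$$ are the evident forgetful functors, discarding in turn the composition $m$, the insertion $i$, and the object of arrows with its source/target pair. Faithfulness of the first two holds for free: a $\mathbb T$-functor is by definition a morphism of pointed $\mathbb T$-graphs satisfying an extra equation, and a morphism of pointed $\mathbb T$-graphs is a morphism of $\mathbb T$-graphs satisfying an extra equation; in both cases the underlying pair $(f_0,f_1)$ is retained. The inclusion ${\sf Ord}(\mathbb T)\hookrightarrow{\sf Cat}(\mathbb T)$ is the inclusion of the full subcategory cut out by the property ``$(d_0,c_0)$ is monic,'' hence trivially a full embedding. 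Failure of faithfulness of $(-)_0$ is witnessed by any non-ordered $\mathbb T$-graph carrying distinct parallel arrows sharing an object part.

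The substantive step, and the only one with real content, is the leftmost embedding $J\colon{\sf Alg}(\mathbb T)\hookrightarrow{\sf Ord}(\mathbb T)$. I would assign to a $\mathbb T$-algebra $(A,a)$ the $\mathbb T$-graph $A^{\natural}:=(A,TA,1_{TA},a)$; its span is monic, as the first leg already is, so $A^{\natural}$ admits at most one $\mathbb T$-category structure. Equation (1) together with the algebra's unit law $a\cdot\eta_A=1_A$ forces $i=\eta_A$. Because $d_0=1_{TA}$ is split monic, the pullback $A_2=TA_1\times_{TA_0}A_1$ collapses canonically to $TTA$ with $d_1=1_{TTA}$ and $c_1=Ta$; the $d_0$-part of (3) then forces $m=\mu_A$, while the $c_0$-part, $a\cdot\mu_A=a\cdot Ta$, is precisely the algebra's associativity law. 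Axiom (4) becomes the two unit triangles of the monad, $\mu_A\cdot T\eta_A=1_{TA}=\mu_A\cdot\eta_{TA}$, and axiom (7) becomes monad associativity $\mu_A\cdot T\mu_A=\mu_A\cdot\mu_{TA}$, so $J(A,a)$ is a well-defined ordered $\mathbb T$-category.

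For functoriality and fullness: a $\mathbb T$-homomorphism $f\colon(A,a)\to(B,b)$ yields $Jf:=(f,Tf)\colon A^{\natural}\to B^{\natural}$, whose $\mathbb T$-graph condition is just $f\cdot a=b\cdot Tf$ and whose preservation of $i$ and $m$ reduce to the naturality of $\eta$ and $\mu$. Conversely, for any $\mathbb T$-functor $g\colon J(A,a)\to J(B,b)$, the ordering of the codomain recovers $g_1$ from $g_0$ via (8) at $\nu=0$: the equations $g_1=1_{TB}\cdot g_1=Tg_0$ and $b\cdot g_1=g_0\cdot a$ together force $g_1=Tg_0$ and simultaneously assert that $g_0$ is a $\mathbb T$-homomorphism. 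Hence $J$ is bijective on hom-sets. The main (and only) obstacle worth flagging is the bookkeeping around the collapse $A_2\cong TTA$ and the line-by-line matching of the $\mathbb T$-category axioms with the monad/algebra identities; everything else is formal.
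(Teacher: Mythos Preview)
Your proposal is correct and follows the same approach as the paper: the embedding $J$ sends $(A,a)$ to the $\mathbb T$-graph $(A,TA,1_{TA},a)$ with $i=\eta_A$ and $m=\mu_A$, and a homomorphism $f$ to $(f,Tf)$. The paper's proof merely states this definition and leaves all verifications implicit, whereas you have filled in the details (the collapse $A_2\cong TTA$, the identification of axioms (4) and (7) with the monad laws, and the fullness argument via $d_0^B=1_{TB}$); so your argument is a faithful elaboration of the paper's sketch rather than a different route.
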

\begin{proof}
Only the full embedding of the Eilenberg-Moore category of $\mathbb T$ into ${\sf Ord}(\mathbb T)$ needs specification: it considers a $\mathbb T$-algebra $(A,a)$ as a $\mathbb T$-graph $(A,TA, 1_{TA}, a)$ and provides it with the $\mathbb T$-category structure $i=\eta_A,\,m=\mu_A$. A ${\mathbb T}$-homomorphism $f:	(A,a)\longrightarrow (B,b)$ then becomes a $\mathbb T$-functor $(f,Tf)$.
\end{proof}

 \begin{remark}
In the case that $\mathbb T$ is the identity monad on $\C$, $\mathbb T$-categories are precisely monoids in the bicategory of spans in $\C$, ${\sf Span}(\mathbb T)$. But then, unfortunately, $\mathbb T$-functors are not captured by their morphisms. That is why \cite{DawsonParePronk2010} proposed (still in the case of the identity monad) that one should set up a double category $\mathbb S{\sf pan}(\mathbb T)$ and consider $\mathbb T$-categories as lax morphisms $\mathbf 1\longrightarrow \mathbb S{\sf pan}(\mathbb T)$ defined on the terminal double category; then the vertical transformations of these lax morphisms are precisely $\mathbb T$-functors.

 In fact, for arbitrary $\mathbb T$, the bicategory ${\sf Span}(\mathbb T)$ and the presentation of $\mathbb T$-categories as their monoids appears already in \cite{Burroni1971}, and it is quite easy to establish also the double category $\mathbb S{\sf pan}(\mathbb T)$ and the above interpretation of $\mathbb T$-functors for any monad $\mathbb T$. Parts of the emerging structure are used extensively in \cite{Zawadowski2011} who, however, works in a fibrational, rather than a double-categorical, setting, as is suggested by Proposition \ref{fibred} below.
\end{remark}


 \section{Special cases, examples, and remarks}
The groups of examples listed under 1--3 below, which appear already in \cite{Burroni1971}, provide a strong motivation for his work. Under items 4 and 5 below we compare this paper's $\mathbb T$-category setting with the $(\mathbb T,\sf V)$-categories as introduced in \cite{ClementinoTholen2003}, emphasizing that the latter structures are known to generalize the former ones only when $\mathbb T$ is a $\bf Set$-monad and the monoidal-closed category $\sf V$ is the 2-chain. An even broader environment to house these and many other structures is given in \cite{CruttwellShulman2010}.
\begin{itemize}
\item[1.] {\em Internal categories}. When $\mathbb T=\mathbb I\rm d_{\C}$ is the identity monad on $\C$, a $\mathbb T$-category is simply an {\em internal category} of $\C$, and a $\mathbb T$-functor is an {\em internal functor}. Ordered $\mathbb T$-categories are also known as {\em internal preorders} of $\C$. In particular, for $\C={\bf Set}$, the category ${\sf Cat}({\mathbb I}{\rm d}_{\C})$ is the ordinary category $\bf Cat$ of small categories, and ${\sf Ord}({\mathbb I}{\rm d}_{\C})$ is the category ${\bf Ord}$ of (pre)ordered sets and their monotone functions. For general $\C$, as prominent examples we mention that crossed modules are internal categories of the category of groups, and that (small strict) double categories are internal categories of ${\bf Cat}$. For further examples, we refer to the extensive literature on internal category theory that was introduced early on by the Ehresmann school (see in particular \cite {BastianiEhresmann1969} and the references given in there) and developed further for its applications in topos theory (see in particular \cite{Johnstone2002}).

 \item[2.] {\em Multicategories}. For the free-monoid (or {\em list}) monad $\mathbb L$ of $\bf Set$, an $\mathbb L$-category is a small {\em multicategory} in the sense of \cite{Lambek1969}, and it is ordered when it is a multi-ordered set, as considered in \cite{ClementinoTholen2003}. We write $\bf MulCat$ and $\bf MulOrd$ for the categories $\sf Cat(\mathbb L)$ and $\sf Ord(\mathbb L)$, respectively.

	\item[3.] {\em Relational $\mathbb T$-algebras}. If $\mathbb T$ is an arbitrary monad on $\C=\bf Set$, ordered $\mathbb T$-categories are equivalently described as Barr's {\em relational $\mathbb T$-algebras} \cite{Barr1970}, as follows. For $A\in{\sf Ord}(\mathbb T)$, we can assume $A_1\subseteqq TA_0\times A_0$, with projections
$d_0,\, c_0$. Then $A_2$ may be taken to be the subset of $TA_1\times A_0$ containing all pairs $({\mathfrak a},x)$ with $((Tc_0)({\mathfrak a}),x)\in A_1$, and the existence of a $\mathbb T$-category structure $i^A, m^A$ now reads as a reflexivity and a (hidden) transitivity condition:
\begin{itemize}
\item[($\tilde{\rm R}$)] for all $x\in A_0$: \; $(\eta_{A_0}(x),x)\in A_1$;
\item[($\tilde{\rm T}$)] for all $\mathfrak a\in TA_1,\, z\in A_0$:\; if $((Tc_0)(\mathfrak a), z)\in A_1$, then $(\mu_{A_0}((Td_0)(\mathfrak a)), z)\in A_1$.
\end{itemize}
In order to make the transitivity in ($\tilde{\rm T}$) more apparent, with
\begin{itemize}
\item[($*$)]$\mathfrak y\to z:\iff(\mathfrak y, z)\in A_1,$
\item[($**$)]$\mathfrak X\hat{\to} \mathfrak y:\iff\exists \mathfrak a\in TA_1: (Td_0)(\mathfrak a)=\mathfrak X\text{ and }(Tc_0)(\mathfrak a)=\mathfrak y,$
\end{itemize}
we may transcribe ($\tilde{\rm R}$) and ($\tilde{\rm T}$) equivalently as
\begin{itemize}
\item[(R)] for all $x\in A_0$: \; $\eta_{A_0}(x)\to x;$
\item[(T)] for all $\mathfrak X\in TTA_0,\, \mathfrak y\in TA_0, z\in A_0:$ if $\mathfrak X\hat{\to}\mathfrak y$ and $\mathfrak y\to z$, then $\mu_{A_0}(\mathfrak X)\to z$.
\end{itemize}
Indeed, assuming ($\tilde{\rm T}$), given $\mathfrak X\to\mathfrak y,\;\mathfrak y\to z$ as in (T) we obtain $\mathfrak a$ as in $(**)$, which then satisfies the hypothesis of ($\tilde{\rm T}$) and gives $\mu_{A_0}(\mathfrak X)\to z$. Hence, $(\tilde{\rm T})$ implies (T), and the converse implication follows similarly.

 In this notation, a $\mathbb T$-functor $f:A\to B$ is simply a map $f_0:A_0\to B_0$ satisfying the monotonicity condition

 (M)\; for all $\mathfrak x\in TA_0,\,y\in A_0$\,:\, if $\mathfrak x\to y$, then $(Tf_0)(\mathfrak x)\to f_0(y)$.

 In this way one sees that Burroni's category ${\sf Ord}(\mathbb T)$ is equivalent to Barr's category of relational algebras $(A_0,\to)$, with the relation $\to$ from $TA_0$ to $A_0$ satisfying the conditions (R) and (T). Indeed, if one writes the relation $\to$ as an arrow $a:TA_0\longrightarrow A_0$ in the 2-category $\bf Rel$ of sets and relations (with relational composition, functions considered as relations via their graphs, and 2-cells given by inclusion), then the conditions (R), (T), (M) are equivalently expressed by the lax-commutative diagrams defining lax Eilenberg-Moore algebras:

 \begin{center}
$\xymatrix{A_0\ar[r]^{\eta_{A_0}}\ar[rd]_{1_{A_0}}& TA_0\ar[d]^{a\quad\;\;\leq}_{\leq\;} & TTA_0\ar[l]_{\hat{T}a}\ar[d]^{\mu_{A_0}} && TA_0
\ar[r]^{Tf_0}\ar[d]_a^{\quad\;\;\leq} & TB_0\ar[d]^b\\
& A_0& TA_0\ar[l]^a && A_0\ar[r]_{f_0} & B_0\\
}$	
\end{center}
Here $\hat{T}a$ is the arrow notation for the relation $\hat{\to}$ defined by $(**)$, and $b$ is the relational arrow given by $B_1$.

 Of primary interest is the case when $\mathbb T$ is the ultrafilter monad $\mathbb U$ on $\bf Set$, whose endofunctor $U=\beta$ assigns to a set $X$ the underlying set of the Stone-$\check{C}$ech-compactification of the discrete space $X$. Then the relation $\to$ is to be read as ``converges to'', and one obtains that ${\sf Alg}(\mathbb U)\cong {\bf KHaus}$ is (isomorphic to) the category of compact Hausdorff spaces \cite{Manes1969}, and ${\sf Ord}(\mathbb U)\simeq{\bf Top}$ is (equivalent to) the category of all topological spaces \cite{Barr1970}; see \cite{HST} for an elaborate proof.

 \item[4.] {\em $(\mathbb T, \mathsf V)$-categories}. For $\C ={\bf Set}$, in {\em monoidal topology} (as presented in \cite{HST}) one generalizes Barr's relational algebras and, thus, Burroni's ordered $\mathbb T$-categories, as follows. A relation from a set $X$ to a set $Y$ is equivalently described by a map $X\times Y\to{\sf 2}$ to the ordered set ${\sf 2}=\{0<1\}$. For the notion of (small) $({\mathbb T}, {\sf V})$-{\em category} one replaces ${\sf 2}$ and its Boolean operation $\wedge$ by a quantale $\sf V$, forms the category ${\sf V}$-$\bf Rel$ of sets and $\sf V$-valued relations, and assumes that the ${\bf Set}$-monad $\mathbb T$ admits a specified {\em lax extension} $\hat{\mathbb T}$ to ${\sf V}$-$\bf Rel$. A $(\mathbb T,\sf V)${\em -category} is then nothing but a lax $\hat{\mathbb T}$-algebra in $\sf V$-$\bf Rel$, given by the left part of the above diagram; its right part describes a $(\mathbb T,\sf V)$-{\em functor}. This then defines the category
$$(\mathbb T,\sf V)\text{-}\bf Cat$$
which, for $\sf V=\sf 2$ and $\mathbb T$ extended to $\sf V$-$\bf Rel$ via $(**)$ of Example 3, reproduces Barr's category of relational $\mathbb T$-algebras. For $\sf V=\sf 1$, the terminal quantale, $(\mathbb T,\sf V)\text{-}\bf Cat$ just reproduces the category $\bf Set$.

 For quantales $\sf V$ other than $\sf 1$ or $\sf 2$, monoidal topology diverges from the realm of categories captured by Burroni's setting, even when $\mathbb T$ is the identity monad (identically extended to $\sf V\text{-}{\bf Rel}$). Indeed, in that case, $(\mathbb T,\sf V)\text{-}\bf Cat$ is just the category $\sf V$-$\sf Cat$ of small categories enriched in $\sf V$. For $\sf V$ the extended real half-line $[0,\infty]$, ordered by the natural $\geq$ and structured by $+$ as its tensor product, $\sf V$-$\bf Cat$ is Lawvere's category $\bf Met$ of (generalized) metric spaces \cite{Lawvere1973}. For the same quantale, but with $\mathbb T$ the ultrafilter monad and a suitable extension to $\sf V$-$\bf Rel$, one obtains Lowen's category $\bf App$ of approach spaces \cite{Lowen1997}, as first proved by \cite{ClementinoHofmann2002}; for an elaborate proof, see also \cite{HST}.

 A principal result of \cite{HST} says that, under mild hypotheses on the parameters $\mathbb T$ and $\sf V$, which are satisfied in all the cases mentioned thus far, the category $(\mathbb T,\sf V)\text{-}\bf Cat$ may be equivalently presented as $(\Pi,\sf 2)$-$\sf Cat$, for some monad $\Pi$ of $\bf Set$ which encodes both parameters, $\mathbb T$ and $\sf V$. However, the lax extension of $\Pi$ to be considered here is not of the type described by $(**)$ of 3. and may therefore {\em not} be assumed to be covered by the Barr-Burroni setting.

 \item[5.] For the ultrafilter monad $\mathbb U$ of $\bf Set$, we {\em conjecture} that $\mathbb U$-categories are precisely small {\em ultracategories}, as considered in \cite{ClementinoTholen2003}, but have not been able yet to complete a proof. Hence, here we must formulate as an {\bf open problem} the question whether ${\sf Cat}(\mathbb U)$ is equivalent to the category ${\bf UltCat}$.
\end{itemize}

 \section{When the object-of-objects functor is small-topological}

 In this section we strengthen Burroni's statement that, with the exception of ${\sf Alg}({\mathbb T})$, the categories occurring in Proposition \ref{chain} are all fibred over $\C$. In fact, we show that their object functors are not only fibrations, but are {\em small-topological} (in the sense of \cite{Hoffmann1972, GiuliTholen2007}; see Definition \ref{small-topological} below), provided that $\C$ has all small limits. To make our generalization more transparent, we start by sketching the proof of Burroni's original result.

 \begin{proposition}\label{fibred}{\em \cite{Burroni1971}} The object functor $$(-)_0:\A\longrightarrow \C$$ is a Grothendieck fibration, for $\A$ any of the categories of the chain
\begin{center}
$\xymatrix{{\sf Ord}(\mathbb T)\ar@{^(->}[r] & {\sf Cat}(\mathbb T)\ar[r] & {\sf Gph}_{\bullet}(\mathbb T)\ar[r] & {\sf Gph}(\mathbb T)\\
}$. 	
\end{center}
Each link preserves $(-)_0$-cartesian morphisms. The functor $(-)_0:{\sf Gph}(\mathbb T)\longrightarrow \C$ is even a bifibration.\\
\end{proposition}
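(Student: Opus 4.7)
My plan is to work along the chain from right to left, constructing cartesian lifts in ${\sf Gph}(\mathbb T)$ by a pullback in $\C$ and then showing that this lift inherits the additional structure (pointing, composition, being ordered) at each successive stage. Given $B\in{\sf Gph}(\mathbb T)$ and $h:X\longrightarrow B_0$ in $\C$, I would set $A_0:=X$ and form $A_1$ as the pullback
\begin{center}
$\xymatrix{A_1 \ar[r]^{\bar h_1} \ar[d]_{(d_0^A,\,c_0^A)} & B_1 \ar[d]^{(d_0^B,\,c_0^B)} \\ TX \times X \ar[r]_{Th \times h} & TB_0 \times B_0}$
\end{center}
in $\C$. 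The resulting morphism $(h,\bar h_1):A\longrightarrow B$ is $(-)_0$-cartesian by the pullback universal property: any $f:A'\longrightarrow B$ whose object part factors as $f_0=h\cdot g$ produces a compatible cone $(Tg\cdot d_0^{A'},\,g\cdot c_0^{A'},\,f_1)$ into $A_1$ and hence a unique morphism-part lift. The opcartesian lift in ${\sf Gph}(\mathbb T)$ is more transparent: for $A$ and $h:A_0\longrightarrow Y$, take the $\mathbb T$-graph $(Y,\,A_1,\,Th\cdot d_0^A,\,h\cdot c_0^A)$ together with $(h,1_{A_1})$, which is opcartesian essentially by definition.

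Next, I would lift the cartesian construction along the three embeddings. If $B$ is pointed by $i^B$, the triple $(\eta_X,\,1_X,\,i^B\cdot h)$ is compatible with the pullback cone: naturality of $\eta$ gives $Th\cdot\eta_X=\eta_{B_0}\cdot h=d_0^B\cdot i^B\cdot h$, while the pointing equation (1) yields $c_0^B\cdot i^B\cdot h = h = h\cdot 1_X$. This induces a unique $i^A:X\longrightarrow A_1$ making $A$ pointed and $\bar h$ a morphism in ${\sf Gph}_\bullet(\mathbb T)$. If in addition $B$ carries a composition $m^B$, the canonical $f_2:A_2\longrightarrow B_2$ induced by $\bar h_1$ fits into a cone $(\mu_X\cdot Td_0^A\cdot d_1^A,\; c_0^A\cdot c_1^A,\; m^B\cdot f_2)$ at $A_1$, by virtue of equations (3) holding in $B$ together with naturality of $\mu$; this induces $m^A:A_2\longrightarrow A_1$. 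Finally, since monomorphisms are stable under pullback, $(d_0^A,c_0^A)$ is monic whenever $(d_0^B,c_0^B)$ is, so the cartesian lift of an ordered $\mathbb T$-category is ordered.

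The main technical step, on which I expect most of the work to fall, is the verification of the neutrality and associativity laws (4) and (7) for the induced $m^A$. The key observation that unlocks them is that, since $A_1$ is a limit, the system of projections $(d_0^A,\,c_0^A,\,\bar h_1)$ is jointly monic, so any identity among parallel arrows into $A_1$ can be tested componentwise. For each axiom, two of the three component equalities reduce to monad identities and naturality, while the third reduces to the corresponding axiom for $m^B$ via the relation $\bar h_1\cdot m^A = m^B\cdot f_2$ produced by the pullback definition. Preservation of cartesian morphisms along the chain is then immediate, because at each stage the underlying $\mathbb T$-graph of the cartesian lift coincides, by construction, with the one produced at the previous stage; in particular the chain of embeddings transports the cartesian lift without modification.
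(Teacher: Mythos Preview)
Your approach is essentially the paper's: build the cartesian lift in ${\sf Gph}(\mathbb T)$ as a limit, then show that this lift inherits pointing, composition, and the ordered property, with the axioms (4) and (7) verified by testing against the jointly monic family of limit projections; the opcartesian lift and the preservation of cartesianness along the chain are handled the same way.

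The one point to watch is that the paper's standing hypothesis on $\C$ is only the existence of pullbacks, not binary products, so your presentation of $A_1$ as the pullback of $(d_0^B,c_0^B)$ along $Th\times h$ presupposes the products $TX\times X$ and $TB_0\times B_0$. The paper instead forms $A_1$ as the limit of the W-shaped diagram
\[
TA_0\xrightarrow{\,T\alpha_0\,} TC_0\xleftarrow{\,d_0^C\,} C_1\xrightarrow{\,c_0^C\,} C_0\xleftarrow{\,\alpha_0\,} A_0,
\]
which is computable from three ordinary pullbacks as $(TA_0\times_{TC_0}C_1)\times_{C_1}(C_1\times_{C_0}A_0)$, and explicitly notes that your product formulation is an equivalent description when $\C$ happens to have binary products. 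With that small adjustment your sketch coincides with the paper's proof.
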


 \begin{proof}
Considering the case $\A={\sf Gph}(\mathbb T)$, for a $\mathbb T$-graph $C$ and a morphism $\alpha_0:A_0\longrightarrow C_0$	 in $\C$ one constructs a $(-)_0$-cartesian lifting $\alpha=(\alpha_0,\alpha_1):A=(A_0, A_1, d_0^A, c_0^A)\longrightarrow C$ by forming the limit $A_1$ of the diagram
\begin{center}
$\xymatrix{TA_0\ar[d]_{T\alpha_0} & & A_0\ar[d]^{\alpha_0}\\
TC_0 & C_1\ar[l]_{d_o^C}\ar[r]^{c_0^C} & C_0\\
}$	
\end{center}
in $\C$, with limit projections $d_0^A:A_1\longrightarrow TA_0, \, \alpha_1:A_1\longrightarrow C_1, \, c_0^A:A_1\longrightarrow A_0$. Hence, $A_1$ may be constructed with the help of three pullbacks, as
$$A_1 \cong (TA_0\times_{TC_0}C_1)\times_{C_1}(C_1\times_{C_0}A_0);$$
equivalently, when $\C$ has binary products, $\alpha_1$ may be taken to be the pullback of $T\alpha_0\times\alpha_0$ along $\langle d_0^C,c_0^C\rangle:C_1\longrightarrow TC_0\times C_0$.
Clearly then, $\alpha:A\longrightarrow C$ is a morphism of $\mathbb T$-graphs, and its $(-)_0$-cartesianess may be confirmed easily.

 When $C$ is pointed, also $A$ is pointed, via the unique $\C$-morphism $i^A:A_0\longrightarrow A_1$ that makes $\alpha$ a morphism of pointed $\mathbb T$-graphs:
\begin{center}
$\xymatrix{
& TA_0\ar[dd] & & A_0\ar[ll]_{\eta_{A_0}}\ar@{=}[rr]\ar[dd]\ar@{-->}[ld]_{i^A} & & A_0\ar[dd]^{\alpha_0}\\
TA_0\ar[dd]_{T\alpha_0}\ar@{=}[ru] && A_1\ar[ll]_{d_0^A\qquad}\ar[rr]^{\qquad c_0^A}\ar[dd]_{\alpha_1} && A_0\ar@{=}[ru]\ar[dd] &\\
& TC_0 && C_0\ar[ll]_{\quad\qquad \eta_{C_0}}\ar@{=}[rr]\ar[ld]^{i^C} && C_0\\
TC_0\ar@{=}[ru] && C_1\ar[ll]^{d_0^C}\ar[rr]_{c_0^C} && C_0\ar@{=}[ru] &\\
}$	
\end{center}

 Similarly, when $C$ is a ${\mathbb T}$-category, there is a unique $\C$-morphism $m^A:A_2\longrightarrow A_1$ rendering the diagram

 \begin{center}
$\xymatrix{
& TA_1\ar[ld]_{\mu_{A_0}\cdot Td_0^A}\ar[dd]^{T\alpha_1} & & A_2\ar[ll]_{d_1^A}\ar[rr]^{c_1^A}\ar[dd]_{\alpha_2}\ar@{-->}[ld]_{m^A} & & A_1\ar[dd]^{\alpha_1}\ar[ld]_{c_0^A}\\
TA_0\ar[dd]_{T\alpha_0} && A_1\ar[ll]_{d_0^A\qquad}\ar[rr]^{\qquad c_0^A}\ar[dd]_{\alpha_1} && A_0\ar[dd]^{\alpha_o} &\\
& TC_1\ar[ld]^{\mu_{C_0}\cdot Td_0} && C_2\ar[ll]_{\quad\qquad d_1^C}\ar[rr]^{c_1^C\qquad\quad}\ar[ld]^{m^C} && C_1\ar[ld]^{c_0^C}\\
TC_0 && C_1\ar[ll]^{d_0^C}\ar[rr]_{c_0^C} && C_0 &\\
}$	
\end{center}
commutative. A quite elaborate diagram chase confirms that the neutrality and associativity laws (4) and (7) hold for $(A,i^A,m^A)$, and that, by design, $m^A$ is unique with the property of satisfying (3) and making $\alpha:A\longrightarrow C$ a $\mathbb T$-functor.

 Finally, when $C$ is ordered, so that the pair $(d_0^C,c_0^C)$ is monic, also $(d_0^A, c_0^A)$ is monic, whence $A$ is ordered as well.

 The $(-)_0$-cocartesian liftings in case $\A={\sf Gph}(\mathbb T)$ are trivial since one may use the same object of morphisms for the codomain of the lifting as for its given domain.
\end{proof}

 \begin{remark} For any of the categories $\A$ of Proposition \ref{fibred}, a morphism $f:A\longrightarrow B$ is $(-)_0$-cartesian if, and only if, the diagram

 \begin{equation}\label{f.f 2 pb sq}
	\xymatrix{&A_{1}\ar@{->}[ld]_{d^{A}_{0}}\ar@{->}[dd]_{f_{1}}\ar@{->}[rd]^{c^{A}_{0}}&\\ TA_{0}\ar[dd]_{Tf_{0}}&&A_{0}\ar[dd]^{f_{0}}\\&B_{1}\ar[ld]_{d^{B}_{0}}\ar[rd]^{c^{B}_{0}}&\\ TB_{0}&&B_{0} }
\end{equation}
is a bi-pullback in $\C$; in the presence of binary products, that is, if

 \begin{equation}\label{T-f.f}
	\xymatrix{A_{1}\ar[d]_{(d^{A}_{0}, c^{A}_{0})}\ar[rr]^{f_{1}}&&B_{1}\ar[d]^{(d^{B}_{0}, c^{B}_{0})}\\ TA_{0}\times A_{0}\ar[rr]_{Tf_{0}\times f_{0}}&&TB_{0}\times B_{0}}
\end{equation}
is a pullback diagram in $\C$. The case $\A={\sf Cat}(\bf Set)={\bf Cat}$ suggests to call $(-)_0$-cartesian morphisms {\em fully faithful} also in the general case.
\end{remark}

 In any fibred category $\A$ over $\C$ with fibration $P$ one has the orthogonal factorization system $(P^{-1}({\rm Iso}\,\C),\{P\text{-cartesian}\})$. For $\A$ as in Proposition \ref{fibred}, calling a morphism $f$ a 0-{\em isomorphism} when $f_0$ is an isomorphism, we obtain in particular:

 \begin{corollary}
Any of the categories $\A$ of Proposition {\em \ref{fibred}} has an orthogonal\\ $(0\text{-isomorphism},\text{fully faithful})$-factorization system.	
\end{corollary}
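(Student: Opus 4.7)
The plan is to invoke the general fact stated immediately above the corollary: any fibration $P:\A\longrightarrow\C$ induces an orthogonal factorization system $(P^{-1}(\mathrm{Iso}\,\C),\{P\text{-cartesian}\})$. By Proposition \ref{fibred}, for $\A$ any of the four listed categories, $(-)_0:\A\longrightarrow\C$ is a Grothendieck fibration, so this general principle applies. The work then reduces to identifying the two classes in our specific setting.

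First I would translate the two classes. By definition, $(-)_0^{-1}(\mathrm{Iso}\,\C)$ consists precisely of those $f$ whose object part $f_0$ is invertible, i.e. of the 0-isomorphisms. By the Remark preceding the corollary, the $(-)_0$-cartesian morphisms are precisely the fully faithful ones (i.e. those making diagram (\ref{f.f 2 pb sq}) a bi-pullback, equivalently (\ref{T-f.f}) a pullback). Hence the two classes of the abstract factorization system match the two classes named in the corollary.

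Next I would sketch the concrete factorization. Given $f:A\longrightarrow B$ in $\A$, apply the construction in the proof of Proposition \ref{fibred} to produce a $(-)_0$-cartesian lift $\bar f:\bar A\longrightarrow B$ of $f_0:A_0\longrightarrow B_0$; thus $\bar f_0=f_0$. Since $\bar f$ is cartesian, the morphism $f$ factors uniquely as $f=\bar f\cdot e$ through a morphism $e:A\longrightarrow\bar A$ whose object part must be $1_{A_0}$, so $e$ is a 0-isomorphism. For the orthogonality condition, consider a commutative square in $\A$ whose left edge is a 0-isomorphism $u$ and whose right edge is a cartesian morphism $v$; the universal property of the cartesian lift $v$, applied to the outer composite and using that $u_0$ is invertible in $\C$, supplies the unique diagonal filler.

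I do not anticipate a serious obstacle: the corollary is a formal consequence of Proposition \ref{fibred} together with the general observation that every fibration produces an orthogonal factorization system whose right class is the cartesian morphisms. The only point that deserves care is to verify that the diagonal supplied by the cartesian universal property is genuinely unique (not merely existent), which is automatic since cartesian lifts are unique up to unique vertical isomorphism and the vertical part of the diagonal is forced by $u_0^{-1}$.
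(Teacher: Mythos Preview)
Your proposal is correct and follows exactly the approach the paper takes: the corollary is stated as an immediate consequence of the general fact (recorded just before it) that any fibration $P$ yields the orthogonal factorization system $(P^{-1}(\mathrm{Iso}\,\C),\{P\text{-cartesian}\})$, together with the identification of $(-)_0$-cartesian morphisms as the fully faithful ones. Your additional sketch of the explicit factorization and diagonal filler is more detail than the paper provides, but entirely in line with its reasoning.
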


 For our generalization of Proposition \ref{fibred} we use the notions of $\J$-topological functor and of wide pullback of a $\J$-indexed cocone, which we present first.

 \begin{definition}\label{small-topological}
{\em (1)} Let $P:\A\longrightarrow	\C$ be a functor, $X$ an object in $\C$ and $H:\J\longrightarrow\A$ a diagram in $\A$. Then a natural transformation $\xi:\Delta X\longrightarrow PH$ is also called a {\em $P$-structured cone} in $\C$; it is {\em indexed by $\J$}. A {\em lifting (along $P$)} of such a $P$-structured cone is a cone $\alpha:\Delta A\longrightarrow H$ in $\A$ with $PA=X$ and $P\alpha=\xi$, and $\alpha$ is $P$-{\em cartesian }if for every cone $\beta:\Delta B\longrightarrow H$ in $\A$ and every morphism $g:PB\longrightarrow PA=X$ in $\C$ with $\xi\cdot\Delta g=P\beta$ one has a unique morphism $f:B\longrightarrow A$ in $\A$ with $\alpha \cdot\Delta f=\beta$ and $Pf=g$.

 \begin{equation*}
\xymatrix@R=5ex@C=4em{
{\Delta B}\ar@{-->}[r]_-{\Delta f}\ar@/^3ex/[rr]^{\beta} &
	{\Delta A}\ar[r]_-{\alpha} & {H} &
	\A \ar[d]^{P} \\
{\Delta PB}\ar[r]_-{\Delta g}\ar@/^3ex/[rr]^{P\beta} &
	{\Delta X}\ar[r]_-{\xi} & {PH} & \C
}
\end{equation*}

 {\em (2)} For a category $\J$, the functor $P:\A\longrightarrow \C$ is {\em $\J$-topological} if every $\J$-indexed $P$-structured cone has a $P$-cartesian lifting. $P$ is {\em (finite-; small-)topological} if it is $\J$-topological for all (finite; small) categories $\J$.

 {\em (3)} $P$ is {\em $(\J\text{-; finite-; small-}$) cotopological} if $P^{\rm op}:\A^{\rm op}\longrightarrow\C^{\rm op}$ is $(\J\text{-; finite-; small-}$) topological.
\end{definition}

 \begin{remark}\label{topremark}
(1) The functor $P$ is $\emptyset$-topological if, and only if, $P$ admits a right-adjoint right-inverse functor, and $P$ is ${\sf 1}$-topological (for the terminal category ${\sf 1}$) if, and only if, $P$ is a (Grothendieck) fibration.

 (2) $P$ is topological if, and only if, $P$ is $\J$-topological for all discrete categories ${\J}$; equivalently, $P$ is $\J$-cotopological for all discrete categories $\J$. A topological functor is faithful and both, a fibration and a cofibration, such that the fibres have all (discretely indexed) limits (or colimits); and these properties characterize the topologicity of $P$ (see \cite{Tholen1979, HST} for details).
\end{remark}

 The following well-known property describes the interaction of the notions of limit and cartesianess (as stated more generally in \cite{Tholen1979}). Its proof is straightforward.

 \begin{lemma}\label{toplemma}
Let $\alpha:\Delta A\longrightarrow H$ be a cone in $\A$ transformed by $P:\A\to\X$ into a limit cone. Then $\alpha$ is a limit cone itself if, and only if, $\alpha$ is $P$-cartesian.	Consequently, if $P$ is $\J$-topological and $\C$ $\J$-complete, so is $\A$, with $\J$-indexed limits being preserved by $P$.
\end{lemma}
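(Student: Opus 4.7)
The plan is to prove the biconditional by transferring cone factorizations in $\A$ to cone factorizations in the base along $P$, using that $P\alpha$ is already known to be a limit cone; the ``consequently'' clause then follows by combining the biconditional with the topological lifting of a limit cone chosen below.

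For the ``if'' direction, suppose $\alpha$ is $P$-cartesian and let $\beta:\Delta B\longrightarrow H$ be any cone in $\A$. Applying $P$ gives a cone $P\beta:\Delta PB\longrightarrow PH$ in the base, so since $P\alpha$ is a limit cone there is a unique morphism $g:PB\longrightarrow PA$ with $P\alpha\cdot\Delta g=P\beta$. Feeding $\beta$ and $g$ into the $P$-cartesian property of $\alpha$ produces a unique $f:B\longrightarrow A$ with $\alpha\cdot\Delta f=\beta$ and $Pf=g$. For uniqueness of $f$ as a limit factorization (without prescribing its image under $P$), observe that any $f':B\longrightarrow A$ with $\alpha\cdot\Delta f'=\beta$ satisfies $P\alpha\cdot\Delta Pf'=P\beta$, forcing $Pf'=g$ by the limit uniqueness for $P\alpha$, and then $f'=f$ by the cartesianness of $\alpha$.

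For the ``only if'' direction, suppose $\alpha$ is a limit cone and let $\beta:\Delta B\longrightarrow H$ be a cone in $\A$ together with $g:PB\longrightarrow PA$ such that $P\alpha\cdot\Delta g=P\beta$. The limit property of $\alpha$ provides a unique $f:B\longrightarrow A$ with $\alpha\cdot\Delta f=\beta$. Applying $P$ gives $P\alpha\cdot\Delta Pf=P\beta=P\alpha\cdot\Delta g$, whence $Pf=g$ by limit uniqueness in the base. Since uniqueness of $f$ subject to $\alpha\cdot\Delta f=\beta$ and $Pf=g$ is already implied by uniqueness in the limit, $\alpha$ is $P$-cartesian.

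For the final clause, assume $P$ is $\J$-topological and $\C$ is $\J$-complete, and let $H:\J\longrightarrow\A$ be any diagram. Then $PH$ admits a limit cone $\xi:\Delta X\longrightarrow PH$ in $\C$, which is a $\J$-indexed $P$-structured cone and therefore lifts to a $P$-cartesian cone $\alpha:\Delta A\longrightarrow H$ in $\A$ with $P\alpha=\xi$. Since $P\alpha=\xi$ is a limit cone, the biconditional just proved shows that $\alpha$ itself is a limit cone of $H$ in $\A$; and $P$ preserves this limit by construction. There is no real obstacle here: the only subtle point is to separate the two different uniqueness assertions in the ``if'' direction (the one coming from the cartesian property and the one coming from the limit below), which is handled as above.
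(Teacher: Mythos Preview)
Your proof is correct and is precisely the standard argument one would expect here. The paper does not actually supply a proof of this lemma, noting only that it is straightforward (and citing a more general version); your write-up fills in exactly that routine verification, including the careful separation of the two uniqueness assertions in the ``if'' direction.
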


 Here is the second notion that we find convenient to use in generalizing Proposition \ref{fibred}. It just generalizes the standard notion of wide pullback (or intersection) of a sink of morphisms from the discrete to the non-discrete level.

 \begin{definition}
{\em (1)} A {\em	wide pullback} of a cocone $\beta:H\longrightarrow\Delta B$ with $H:\J\longrightarrow\C$ in a category $\C$ is a cone $\alpha:\Delta A\longrightarrow H$ together with a morphism $p:A\longrightarrow B$ such that $\beta\cdot\alpha=\Delta p$ and the obvious universal property holds: for every cone $\gamma:\Delta C\longrightarrow H$ and morphism $q:C\longrightarrow B$ with $\beta\cdot\gamma=\Delta q$ one has factorizations $\gamma=\alpha\cdot\Delta f$ and $q=p\cdot f$, for a unique morphism $f:C\longrightarrow A$.

 {\em (2)} We say that the category $\C$ {\em has $\J$-indexed wide pullbacks} if every $\J$-indexed cocone in $\C$ has a wide pullback.
\end{definition}

 \begin{remark}\label{widepbsremarks}
(1) Trivially, $\emptyset$-indexed and ${\sf 1}$-indexed wide pullbacks	exist in every category. A wide pullback of $\beta: H\longrightarrow \Delta B$ is just a limit of $H$ when $B=1$ is terminal in $\C$.

 (2) $\J$-indexed wide pullbacks in $\C$ may be constructed from $\J$-indexed limits in $\C$ when $\C$ has also equalizers of $\J_0$-indexed families of parallel arrows, where $\J_0$ is the class (or discrete category) of objects of $\J$. Indeed, in the notation of the Definition, just form the limit $D$ of $H$ with projections $\delta_j:D\longrightarrow Hj$ and then equalize the family of morphisms $\beta_j\cdot\delta_j,\; j\in\J_0$.

 (3) Since $\C$ is assumed to have pullbacks, note that the equalizer of a $\J_0$-indexed family of morphisms $f_j:A\longrightarrow B$ exists in $\C$ when $\C$ has $\J_0$-indexed products, as the equalizer may be constructed as the pullback of the diagonal morphism $B\longrightarrow B^{\J_0}$ along $\prod_{j\in\J_0}f_j:A^{\J_0}\longrightarrow B^{\J_0}$.
\end{remark}

We can now expand on Burroni's proposition and comprehensively state the following theorem.
	
\begin{theorem}\label{topologicity}
In addition to pullbacks, let $\C$ have binary products and $\J$-indexed wide pullbacks, for some category $\J$. Then, for $\A$ any of the categories of the chain
\begin{center}
$\xymatrix{{\sf Ord}(\mathbb T)\ar@{^(->}[r] & {\sf Cat}(\mathbb T)\ar[r] & {\sf Gph}_{\bullet}(\mathbb T)\ar[r] & {\sf Gph}(\mathbb T)\,,\\
}$ 	
\end{center}

 {\em (1) }
the object functor $(-)_0:\A\longrightarrow \C$ is $\J$-topological, has a right-adjoint right-inverse functor and, except for $\A={\sf Gph}({\mathbb T})$, also a left-adjoint right-inverse functor. Furthermore,

 {\em (2)}
every category of the chain has all $\J$-indexed limits if $\C$ has them, and they are preserved by its object functor, as well as by the link of the chain departing from it which, in fact, preserves the $(-)_0$-cartesianess of $\J$-indexed cones; and,

 {\em (3)} irrespective of the above hypotheses on $\C$, also ${\sf Alg}(\mathbb T))$ has all $\J$-indexed limits if $\C$ has them, and they are preserved by the full embedding $$\xymatrix{{\sf Alg}(\mathbb T)\ar@{^(->}[r]
& {\sf Ord}(\mathbb T)}\;.$$
\end{theorem}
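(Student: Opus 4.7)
The plan is to upgrade Proposition \ref{fibred} (which handles $\J={\sf 1}$) to arbitrary $\J$ by replacing its single-pullback construction with a wide-pullback construction in $\C$, and then to read off parts (2) and (3) as near-immediate consequences.

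For part (1) the main work lies in the case $\A={\sf Gph}(\mathbb T)$. Writing $Hj=C^{(j)}$ for a diagram $H:\J\longrightarrow{\sf Gph}(\mathbb T)$ and $\xi_j:X\longrightarrow C^{(j)}_0$ for the components of a $(-)_0$-structured cone $\xi$, I would build a cartesian lifting $A=(X,A_1,d_0^A,c_0^A)$ by constructing $A_1$ in two stages. First, using binary products and pullbacks in $\C$, form for each $j\in\J$ the pullback $P_j$ of $\langle d_0^{C^{(j)}},c_0^{C^{(j)}}\rangle:C^{(j)}_1\longrightarrow TC^{(j)}_0\times C^{(j)}_0$ along $T\xi_j\times\xi_j:TX\times X\longrightarrow TC^{(j)}_0\times C^{(j)}_0$. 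The universal property of pullbacks makes $j\mapsto P_j$ functorial in $\J$, equipped with a natural cocone $\beta_j:P_j\longrightarrow TX\times X$. Second, a $\J$-indexed wide pullback of $\beta$ yields $A_1$ with a common morphism $\langle d_0^A,c_0^A\rangle:A_1\longrightarrow TX\times X$ and compatible projections $\alpha_{j,1}:A_1\longrightarrow C^{(j)}_1$; the combined universal properties then deliver the $(-)_0$-cartesianess of the resulting lifting.

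Propagation up the chain to ${\sf Gph}_{\bullet}(\mathbb T)$, ${\sf Cat}(\mathbb T)$, and ${\sf Ord}(\mathbb T)$ then proceeds exactly as in Proposition \ref{fibred}: the universal property of the lifting induces unique morphisms $i^A$ and $m^A$ satisfying the pointing and composition conditions, while the neutrality, associativity, and order properties of $A$ are inherited from the family $(C^{(j)})$ by joint-monicity arguments. The right-adjoint right-inverse functor is obtained from the same construction specialized to the trivial case $\J=\emptyset$ (for which a wide pullback exists automatically by Remark \ref{widepbsremarks}(1)), yielding the indiscrete object above each $X\in\C$; the left-adjoint right-inverse for $\A\neq{\sf Gph}(\mathbb T)$ is the discrete-graph functor $D$ of Section 2.

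Part (2) drops out from part (1) via Lemma \ref{toplemma}: $\J$-topologicity plus $\J$-completeness of $\C$ yields $\J$-completeness of $\A$ with $(-)_0$-preservation, and preservation of $(-)_0$-cartesianess by the links is the cone-valued extension of the corresponding statement in Proposition \ref{fibred}. For part (3), the forgetful functor ${\sf Alg}(\mathbb T)\longrightarrow\C$ standardly creates limits, so ${\sf Alg}(\mathbb T)$ inherits all $\J$-indexed limits from $\C$; preservation by the embedding into ${\sf Ord}(\mathbb T)$ is checked by verifying that, for a diagram of $\mathbb T$-algebras with $\C$-limit $(L,\ell)$, the ordered $\mathbb T$-category $(L,TL,1_{TL},\ell)$ satisfies the wide-pullback universal property of part (1). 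The principal obstacle I anticipate is the detailed diagram chase confirming that the morphism $m^A$ on the cartesian lifting fulfils the neutrality and associativity laws, now propagated through a wide pullback rather than a single pullback; the crucial lever will be the joint monicity of the family $\{\alpha_{j,1}\}_{j\in\J}$ together with $\langle d_0^A,c_0^A\rangle$, which reduces each axiom to its known validity in each $C^{(j)}$.
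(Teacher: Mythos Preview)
Your proposal is correct and follows essentially the same route as the paper: the paper also constructs $A_1$ by first forming, for each $j$, the pullback of $\langle d_0^j,c_0^j\rangle$ along $T\alpha_0^j\times\alpha_0^j$, and then taking the $\J$-indexed wide pullback of the resulting cocone into $TA_0\times A_0$; it likewise propagates the structure up the chain as in Proposition~\ref{fibred}, obtains the right adjoint from the case $\J=\emptyset$ and the left adjoint from $D$, and derives (2) and (3) via Lemma~\ref{toplemma} and the standard creation of limits in ${\sf Alg}(\mathbb T)$.
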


 \begin{proof}
(1) In order to show the $\J$-topologicity of $(-)_0:\A\longrightarrow\C$ for the given choices of $\A$, we consider a diagram $H:\J\longrightarrow\A$ and a cone $\alpha_0:\Delta A_o\longrightarrow (-)_0H$ in $\C$. With $Hj=(C_0^j,C_1^j,d_0^j,c_0^j)$ for all $j\in\J_0$, these data give us for all $\sigma:j\longrightarrow k$ in $\J$ the diagrams
\begin{center}
$\xymatrix{ TA_0\ar[dd]_{T\alpha_o^j}\ar[dddr]^{T\alpha_0^k} & & & & A_0\ar[dd]_{\alpha_0^j}\ar[dddr]^{\alpha_0^k} &\\
&&&&&\\
TC_0^j\ar[rd]_{T((H\sigma)_0)} & & C_1^j\ar[ll]^{\quad d_0^j}\ar[rr]_{c_0^j\quad}\ar[rd]_{(H\sigma)_1} & & C_0^j\ar[rd]_{(H\sigma)_0} &\\
& TC_0^k && C_1^k\ar[ll]^{d_0^k}\ar[rr]_{c_0^k} && C_0^k
}$	
\end{center}
of which, taken collectively, we would like to form the limit in $\C$, to be called $A_1$, with limit projections $$d:A_1\longrightarrow TA_0,\quad \alpha_1^j:A_1\longrightarrow C_1^j\quad(j\in\J_0),\quad c:A_1\longrightarrow A_0.$$
If $\J=\emptyset$, that limit is simply the product $TA_0\times A_0$. Also for $\J={\sf 1}$ the existence of the limit is guaranteed, as we are then just in the setting of Proposition \ref{fibred}. In fact, for any $\J\neq\emptyset$, under our hypotheses, the construction of the limit $A_1$ may be based on the case $\J={\sf 1}$, as follows. For every $j\in \J_0$, we form the pullback $\langle\delta^j,\gamma^j\rangle:A_1^j\longrightarrow TA_0\times A_0$ of $\langle d_0^j,c_0^j\rangle:C_1^j\longrightarrow TC_0^j\times C_0^j$ along $T\alpha_0^j\times\alpha_0^j: TA_0\times A_0\longrightarrow TC_0^j\times C_0^j$, with $\pi^j$ denoting the other pullback projection. Clearly, the formation of $A_1^j$ is functorial in $j$, making the morphisms $\langle \delta^j,\gamma^j\rangle$ form a $\J$-indexed cocone, of which one may take the wide pullback, given by morphisms $\rho^j:A_1\longrightarrow A_1^j$, such that $\langle\delta^j,\gamma^j\rangle\cdot \rho^j=\langle d_0,c_0\rangle$ is constant. Finally, one puts $\alpha_1^j:=\pi^j\cdot \rho^j$ to complete the limit construction.

 Without reference to its actual construction, just using the universal property of the limit, in complete analogy to the steps taken in the proof of Proposition \ref{fibred} one can now show that, with $A=(A_0,A_1,d_0,c_0)$ and $\alpha^j=(\alpha_0^j,\alpha_1^j)$, one obtains a $(-)_0$-cartesian lifting of $\alpha:\Delta A\longrightarrow H$ of $\alpha_0$. This fact remains true also in the case $\J=\emptyset$, so that one has a right-adjoint right-inverse to $(-)_0$, by Remark \ref{topremark}(1).

 Already in Section 2 we mentioned the trivial fact that the ``discrete'' functor $D$ is left-adjoint right-inverse to $(-)_0 :{\sf Gph}_{\bullet}\longrightarrow \C$, and it is easy to see that $D$ actually takes values in ${\sf Ord}(\mathbb T)$.

 (2) From the proof of (1) we conclude that the links of the chain preserve $\J$-cartesianness of cones. The remaining statements follow directly from Lemma \ref{toplemma}.

 (3) It is well known how to construct a $\J$-indexed limit in ${\sf Alg}(\mathbb T)$ when the limit of the underlying diagram in $\C$ exists---without any other hypotheses on $\C$. It is easy to see that, after embedding the limit cone obtained in this way into ${\sf Ord}(\mathbb T)$, it actually forms a limit of the diagram displayed in (1).
\end{proof}

 \begin{corollary}\label{summarytopological}
If $\C$ is finitely complete, so are the categories ${\sf Gph}_{\bullet}(\mathbb T), {\sf Cat}(\mathbb T)$ and ${\sf Ord}(\mathbb T)$, with their $\C$-valued	object functors $(-)_0$ being finite-topological and having both, a left-adjoint right-inverse and a right-adjoint right-inverse functor. These categories are complete if $\C$ is, with $(-)_0$ being small-topological. If, in addition, $\C$ is well-powered, the object functor $(-)_0:{\sf Ord}(\mathbb T)\longrightarrow \C$ is even topological, and then ${\sf Ord}(\mathbb T)$ is also cocomplete when $\C$ is cocomplete.
\end{corollary}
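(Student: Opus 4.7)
The plan is to deduce each clause of the corollary from Theorem \ref{topologicity} by a judicious choice of $\J$, and then handle the final statement about ${\sf Ord}(\mathbb T)$ separately by exploiting the monic nature of its spans. First I would treat the finite-completeness case: if $\C$ is finitely complete, then for every finite $\J$ the category $\C$ has $\J$-indexed wide pullbacks by Remark \ref{widepbsremarks}(2), since finite products and pullbacks yield the requisite equalizers. Hence all hypotheses of Theorem \ref{topologicity} are met for finite $\J$, giving finite-topologicity of $(-)_0:\A\longrightarrow\C$ on each of ${\sf Gph}_{\bullet}(\mathbb T),{\sf Cat}(\mathbb T),{\sf Ord}(\mathbb T)$. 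The right-adjoint right-inverse is produced by the $\emptyset$-topological case (Remark \ref{topremark}(1)), while the left-adjoint right-inverse is the discrete functor $D$ of Section 2, which by Theorem \ref{topologicity}(1) actually factors through ${\sf Ord}(\mathbb T)$ and hence through the other two categories along the chain. Finite completeness of $\A$ then follows from Lemma \ref{toplemma}.

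For the small-complete case one argues identically with $\J$ ranging over all small categories: completeness of $\C$ in particular gives all $\J$-indexed wide pullbacks (Remark \ref{widepbsremarks}(2)), so Theorem \ref{topologicity}(1) gives small-topologicity, and Lemma \ref{toplemma} transfers small completeness to $\A$ along $(-)_0$.

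For the assertion that $(-)_0:{\sf Ord}(\mathbb T)\longrightarrow\C$ is \emph{topological}, one must extend $\J$-topologicity to \emph{all} discrete categories $\J$, possibly of proper-class size. Here the ordered hypothesis becomes essential. Inspecting the limit construction in the proof of Theorem \ref{topologicity}(1), for each $j\in\J$ the pullback $\langle\delta^j,\gamma^j\rangle:A_1^j\longrightarrow TA_0\times A_0$ of $\langle d_0^j,c_0^j\rangle$ along $T\alpha_0^j\times\alpha_0^j$ is \emph{monic}, since its target arrow $\langle d_0^j,c_0^j\rangle$ is monic by ordering. The object of morphisms $A_1$ of the putative $(-)_0$-cartesian lifting is then the intersection of the class of subobjects $A_1^j\hookrightarrow TA_0\times A_0$. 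Well-poweredness of $\C$ ensures that, up to isomorphism, only a \emph{set} of such distinct subobjects occurs, so this intersection reduces to a small wide pullback which exists by completeness of $\C$. The resulting $A=(A_0,A_1,d_0,c_0)$ inherits ordering, and hence a unique $\mathbb T$-category structure, as in Theorem \ref{topologicity}(1). Thus $(-)_0$ is $\J$-topological for every discrete $\J$, i.e.\ topological (Remark \ref{topremark}(2)). Cocompleteness of ${\sf Ord}(\mathbb T)$ when $\C$ is cocomplete is then a standard consequence: every topological functor lifts colimits from its base (any cocone in $\A$ whose image becomes a colimit in $\C$ admits a cocartesian lifting, dual to Lemma \ref{toplemma}), so small colimits in $\C$ ascend to ${\sf Ord}(\mathbb T)$.

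The delicate point, and the one I expect to require the most care, is the third paragraph: one must check not only that the class of subobjects $\{A_1^j\}$ has a small representative set under well-poweredness, but also that the resulting wide pullback $A_1$ — a priori only a subobject of $TA_0\times A_0$ — retains the structural identities (the $\mathbb T$-category axioms and $(-)_0$-cartesianess) from the small-topological case. Fortunately, because $A$ is ordered, the $\mathbb T$-category structure is uniquely determined by the span $(d_0,c_0)$ whenever it exists, and its existence is enforced by the $(-)_0$-cartesianess relative to the maps into each $Hj$; so no genuinely new verification is needed beyond the one already carried out in the proof of Theorem \ref{topologicity}(1).
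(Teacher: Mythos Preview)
Your proposal is correct and follows essentially the same route as the paper: invoke Theorem \ref{topologicity} together with Remark \ref{widepbsremarks}(2),(3) for the first two clauses, and for the last clause observe that in ${\sf Ord}(\mathbb T)$ the maps $\langle\delta^j,\gamma^j\rangle$ are monic so that well-poweredness reduces the large wide pullback to a small one, with cocompleteness following since a topological functor is cotopological (Remark \ref{topremark}(2)). Your final paragraph worrying about transferring the $\mathbb T$-category axioms to the reduced limit is overly cautious: once the object $A_1$ with its projections is constructed, the argument of Theorem \ref{topologicity}(1) applies verbatim, as the paper implicitly assumes.
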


 \begin{proof}
All but the last statement follow from Theorem \ref{topologicity}, in conjunction with Remark \ref{widepbsremarks}(2),(3). For the last statement, note that the wide pullback of a large family of monomorphisms may be constructed as the wide pullback of a small (representative) family when $\C$ is well-powered, so that the needed limit in the proof of Theorem \ref{topologicity} exists even when $\J$ is large (and discrete). Since, by Remark \ref{topremark}(2), the topological functor
$(-)_0:{\sf Ord}(\mathbb T)\longrightarrow \C$ is also cotopological, $(-)_0$ ``lifts" not only the existence of limits, but also of colimits.
\end{proof}

 It is important to observe that Corollary \ref{summarytopological} requires no limit preservation by the functor $T$. But the computation of finite limits in ${\sf Cat}(\mathbb T)$ (and likewise in any of the categories $\A$ of Theorem \ref{topologicity}) in terms of terminal objects and pullbacks becomes much easier when the functor $T$ preserves pullbacks, as the following corollary shows. However, in Proposition \ref{pbstability} below we give an explicit construction of pullbacks in ${\sf Cat}(\mathbb T)$ {\em without} the hypothesis of pullback preservation by $T$.

 \begin{corollary}\label{Tpreservespbs}
{\em (1)} For a terminal object $1$ in $\C$, the $\mathbb T$-graph $1=(1, T1, 1_{T1}, !)$ is terminal in ${\sf Gph}(\mathbb T)$, and the $\mathbb T$-category $(1,\eta_1,\mu_1)$ is terminal in ${\sf Cat}(\mathbb T)$.

 {\em (2)} If $T:\C\longrightarrow\C$ preserves pullbacks, then ${\sf Cat}(\mathbb T)$ has pullbacks that are preserved by $(-)_0:{\sf Cat}(\mathbb T)\longrightarrow \C$.
\end{corollary}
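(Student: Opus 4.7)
For part (1), I would verify terminality in ${\sf Gph}(\mathbb T)$ directly. Given any $\mathbb T$-graph $A$, a morphism $A\to 1$ has $f_0 : A_0 \to 1$ uniquely determined by terminality of $1$ in $\C$; the domain-compatibility condition reads $1_{T1} \cdot f_1 = Tf_0 \cdot d_0^A$, which forces $f_1 = T(!_{A_0}) \cdot d_0^A$, while the codomain-compatibility condition $!\cdot f_1 = f_0\cdot c_0^A$ is automatic since both sides land in $1$. For the $\mathbb T$-category structure, the pullback defining $A_2$ reduces to $TT1$ (because $d_0 = 1_{T1}$), so $i = \eta_1$ and $m = \mu_1$ are well typed, and the $\mathbb T$-category axioms (1)--(7) collapse to the unit and associativity laws of $\mathbb T$. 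Preservation of $i$ and $m$ by the induced $\mathbb T$-functor out of any $\mathbb T$-category $A$ follows at once from naturality of $\eta,\mu$ together with (3) applied to $A$.

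For part (2), given $\mathbb T$-functors $f:A\to C$ and $g:B\to C$, I would build the pullback $P$ by lifting the pullback of object-parts. Let $P_0 = A_0 \times_{C_0} B_0$ in $\C$, with projections $p_0^A, p_0^B$; pullback preservation by $T$ gives $TP_0 \cong TA_0 \times_{TC_0} TB_0$. Form $P_1 = A_1 \times_{C_1} B_1$ with projections $p_1^A, p_1^B$. The equations (8) for $f$ and $g$ show that the pairs $(d_0^A p_1^A, d_0^B p_1^B)$ and $(c_0^A p_1^A, c_0^B p_1^B)$ are compatible, so they induce unique morphisms $d_0^P : P_1 \to TP_0$ and $c_0^P : P_1 \to P_0$, yielding a $\mathbb T$-graph $P$ on which the projections $p^A = (p_0^A, p_1^A)$ and $p^B$ are graph-morphisms.

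To install the $\mathbb T$-category structure, I would use the universal property of $P_1$ throughout. The pair $(i^A p_0^A, i^B p_0^B)$ agrees in $C_1$ by (9) for $f, g$, inducing $i^P$; verification of (1) follows from the uniqueness clause. For $m^P$, iterated pullback-preservation by $T$ yields $TP_1 \cong TA_1 \times_{TC_1} TB_1$, and then a routine pasting-of-pullbacks argument identifies $P_2 \cong A_2 \times_{C_2} B_2$ compatibly with the projections to $A_2, B_2, C_2$ induced by $f_2, g_2$. Consequently $(m^A p_2^A, m^B p_2^B)$ factors uniquely through $P_1$ as $m^P$, and the equations (3) for $P$ follow from the defining equations of $d_0^P, c_0^P$ together with those of the three given $\mathbb T$-categories. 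The neutrality and associativity axioms (4), (7) for $P$ transfer from those of $A, B$ via the uniqueness side of the universal property of $P_1$, since both sides of each identity become equal after post-composition with $p_1^A$ and with $p_1^B$. The universal property of $P$ in ${\sf Cat}(\mathbb T)$ is then immediate: a $\mathbb T$-functor cone $(h : D \to A, k : D \to B)$ with $f \cdot h = g \cdot k$ induces unique $\C$-morphisms $u_0 : D_0 \to P_0$ and $u_1 : D_1 \to P_1$, and the same uniqueness arguments show that $(u_0, u_1)$ is a $\mathbb T$-functor $D \to P$. That $(-)_0$ preserves this pullback is by construction.

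The main obstacle will be the identification $P_2 \cong A_2 \times_{C_2} B_2$ (and the analogous $P_3 \cong A_3 \times_{C_3} B_3$ if one wishes to verify associativity by direct computation rather than by pullback-uniqueness), for this is exactly the step that cannot be made without assuming $T$ preserves pullbacks. Everything else is a careful but standard diagram chase, essentially the pattern used in the construction of $(-)_0$-cartesian lifts in Theorem \ref{topologicity}, only simplified by the fact that pullbacks rather than wide pullbacks suffice here.
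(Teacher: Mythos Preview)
Your proposal is correct and follows the same underlying construction as the paper: for (1) a direct check, and for (2) taking $P_0=A_0\times_{C_0}B_0$, $P_1=A_1\times_{C_1}B_1$, using pullback-preservation by $T$ to obtain $d_0^P,c_0^P$, and then installing $i^P,m^P$ via the universal property of $P_1$.

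The one organisational difference is that the paper does not carry out your direct verification of the $\mathbb T$-category axioms for $P$. Instead, having exhibited $P_1$ (with projections $d_0^P,\,p_1^A,\,p_1^B,\,c_0^P$) as precisely the limit computed in the proof of Theorem~\ref{topologicity} for the cospan-shaped $\J$, it simply invokes that theorem: the $(-)_0$-cartesian lifting so obtained already carries a $\mathbb T$-category structure, and by Lemma~\ref{toplemma} it is then a pullback in ${\sf Cat}(\mathbb T)$. Your route unrolls exactly this argument by hand (including the identifications $P_2\cong A_2\times_{C_2}B_2$ etc.), which makes the proof self-contained but longer; the paper's route explains why the statement is literally a corollary of the general machinery. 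Either way the content is the same.
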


 \begin{proof} 
 (1) The assertion follows from the case $\J=\emptyset$ as considered in the proof of Theorem \ref{topologicity} since one trivially has the product $T1\times 1\simeq T1$ in $\C$.

 (2) Given $\mathbb T$-functors $f:A\longrightarrow C,\; g:B\longrightarrow C$, one forms the pullbacks $(g_0',f_0')$ of $(f_0, g_0)$ and $(g_1',f_1')$ of $(f_1,g_1)$ in $\C$ to obtain the solid- and dashed-arrow parts of the following diagram:	
\begin{center}
$\xymatrix{TP_0\ar[d]_{Tg_0'}\ar[rdd]^{Tf_0'} && P_1\ar@{-->}[d]_{g_1'}\ar@{-->}[rdd]^{f_1'}\ar@{..>}[ll]^{d_0^P}\ar@{..>}[rr]_{c_0^P} && P_0\ar[d]_{g_0'}\ar[rdd]^{f_0'} &\\
TA_0\ar[rdd]_{Tf_0} && A_1\ar[rdd]_{f_1}\ar[ll]^{\quad d_0^A}\ar[rr]_{\quad c_0^A} && A_0\ar[rdd]_{f_0} &\\
& TB_0\ar[d]^{Tg_0} && B_1\ar[d]^{g_1}\ar[ll]_{d_0^B\quad}\ar[rr]^{c_0^B\quad} && B_0\ar[d]^{g_0}\\
& TC_0 && C_1\ar[ll]_{d_0^C}\ar[rr]^{c_0^C} && C_0\\
}$	
\end{center}
As the left and right vertical panels are pullbacks (since $T$ preserves pullbacks), one obtains the induced morphisms $d_0^P,\,c_0^P$ completing this commutative diagram. It is now easy to verify that $P_1$, together with the dotted and dashed arrows serving as projections, is a limit of the solid-arrow part of diagram in $\C$. Therefore, the proof of Theorem \ref{topologicity} shows that the $\mathbb T$-graph $P$ defined by the above diagram can be augmented to become a $\mathbb T$-category, such that the pair $(g',f')$ serves as a pullback of $(f,g)$ in ${\sf Cat}(\mathbb T)$.
\end{proof}

\begin{examples}
(1) The object functor ${\bf Cat}\longrightarrow{\bf Set}$	
is small-topological \cite{Hoffmann1972} and has both, a fully faithful left adjoint and a fully faithful right adjoint, by an application of Corollary \ref{summarytopological} to the identity monad on $\bf Set$. But it is neither topological (since it is not even faithful), nor small-cotopological, not even a cofibration.

 By contrast, the forgetful functor ${\bf Ord}\longrightarrow{\bf Set}$ is (co)topological.

 (2) Trading the identity monad for the list monad $\mathbb L$ on $\bf Set$ one sees that, likewise, the object functor $\bf MulCat\to \bf Set$ is small-topological and has both adjoints, but is not a cofibration, whereas the forgetful functor ${\bf MulOrd}\longrightarrow\bf Set$ is (co)topological.

 (3) Considering the ultrafilter monad $\mathbb U$ of $\bf Set$, one obtains the non-trivial fact that the object functor ${\sf Cat}(\mathbb U)\longrightarrow\bf Set$ is small-topological and has both adjoints, but it is not a cofibration. When restricting it to ordered $\mathbb U$-categories, {\em i.e.}, to topological spaces, one reproduces the obvious fact that the forgetful functor $\bf Top\longrightarrow\bf Set$ is (co)topological.
\end{examples}

 \section{Factoring a T-functor universally through a perfect T-functor}

 The {\em comprehensive factorization }for functors of ordinary categories as given by \cite{StreetWalters1973} carries over to internal functors; see \cite{Johnstone2002}. Here we generalize this construction to $\mathbb T$-functors when the endofunctor $T$ of $\C$ preserves coequalizers of reflexive pairs, these having to be stable under pullback in $\C$. The construction establishes an orthogonal factorization system in ${\sf Cat}(\mathbb T)$, as we show in this and the next section.

 It is clear how to translate the relevant notion of discrete Grothendieck cofibration into Burroni's context:

 \begin{definition}
A $\mathbb T$-functor $f:A\to B$ is a {\em discrete cofibration (\cite{Grothendieck1961})} or a {\em discrete 0-fibration (\cite{Gray1969}, \cite{StreetWalters1973})}, nowadays more commonly known as a {\em discrete opfibration (\cite{Johnstone2002})}, and here, in view of the topological example below, simply referred to as a {\em perfect} $\mathbb T$-functor, if
\begin{center}
$\xymatrix{TA_0\ar[d]_{Tf_0}& A_1\ar[l]_{d_0^A}\ar[d]^{f_1}\\
TB_0&B_1\ar[l]^{d_0^B}\\
}$	
\end{center}
is a pullback diagram in $\C$. We denote by ${\sf Perf}(\mathbb T)$ the class of perfect morphisms in ${\sf Cat}(\mathbb T)$; those with fixed codomain $B$ are the objects of the full subcategory ${\sf Perf}(\mathbb T)/B$ of the slice category ${\sf Cat}(\mathbb T)/B$.
\end{definition}

 Before proceeding, let us indicate that the notion of perfect $\mathbb T$-functor leads to important types of morphisms beyond the realm of category theory.

 \begin{examples}
(1) For $\mathbb T$ the identity functor on ${\sf Set}$, we obtain the classical notion of discrete cofibration $f:A\longrightarrow B$ of small categories:	 for every object $x$ in $A$ and every morphism $\beta:fx=y\longrightarrow y'$ in $B$, one has a unique morphism $\alpha:x\longrightarrow x'$ in $A$ with $f\alpha=\beta$. This characterization extends to small multicategories, {\em i.e.}, to the case when $\mathbb T$ is the list monad on ${\sf Set}$, as graphically indicated by
\begin{center}
$\xymatrix{x\ar@{-}[d]\ar@{..>}[r]^{\alpha} & x'\ar@{..}[d] && (x_1, ..., x_n)\ar@{-}[d]\ar@{..>}[r]^{\qquad\alpha} & x'\ar@{..}[d] && A\ar[d]^f\\
y\ar[r]^{\beta} & y' && (y_1, ..., y_n)\ar[r]^{\qquad \beta} & y' && B
}$	
\end{center}

 (2) Considering now ${\sf Ord}(\mathbb T)$ instead of ${\sf Cat}(\mathbb T)$, where $\mathbb T={\rm Id}_{\sf Set}$, so that the arrows $\alpha$ and $\beta$ above are to be read as $\leq$, we see that a monotone map $f:X\longrightarrow Y$ of preordered sets is a discrete cofibration precisely when for all $y'\in Y$ one has 1. $f^{-1}(\downarrow y')	\subseteq \downarrow(f^{-1}y')$ and 2. $\forall x',x''\in f^{-1}(y'):\;(\downarrow x'\cap\downarrow x''\neq\emptyset\Rightarrow x'=x'')$. Likewise, taking now $\mathbb T$ to be the ultrafilter monad, we see that a continuous map $f:X\longrightarrow Y$ of topological spaces is a discrete cofibration precisely when for every ultrafilter $\mathfrak x$ on $X$ such that its image $f[\mathfrak x]$ under $f$ converges to $y'\in Y$, one has that $\mathfrak x$ converges in $X$ to a unique point $x'\in f^{-1}y'$.
\begin{center}
$\xymatrix{x\ar@{-}[d]\ar@{..>}[r]^{\leq} & x'\ar@{..}[d] &&& \mathfrak x\ar@{-}[d]\ar@{..>}[r]^{\rm conv} & x'\ar@{..}[d] &&& X\ar[d]^f\\
y\ar[r]^{\leq} & y' &&& \mathfrak y\ar[r]^{\rm conv} & y' &&& Y
}$	
\end{center}
Separating the existence and uniqueness assertions in this description, one readily proves that a discrete cofibration $f:X\longrightarrow Y$ in ${\sf Top}$ is characterized as being 1. {\em proper} \cite{Bourbaki1989}, so that every pullback of $f$ is a closed map or, equivalently, $f$ is a closed map with compact fibres, and 2. $f$ is {\em Hausdorff} \cite{James1989}, so that distinct point of the same fibre of $f$ may be separated by disjoint open sets in $X$. In the absence of any separation conditions on their domains and codomains, such maps have been called {\em perfect} in various generalized contexts (see, for example, \cite{Tholen1999} and \cite{HST}), which has led us to using that name also in the current generalized context of Burroni's $\mathbb T$-categories. Of course, this topological characterization returns the characterization of discrete cofibrations of ordered sets as given above when these are provided with the Alexandroff topology.
\end{examples}

 Recall that a pair of parallel morphisms in a category is {\em reflexive} if the two morphisms admit a common section in the category. Now, this paper's central result reads as follows:

 \begin{theorem}\label{reflective}
Assume that the category $\C$ with pullbacks also admits coequalizers of reflexive pairs and that these be stable under pullback and be preserved by $T$. Then,
for every $\mathbb T$-category $B$, ${\sf Perf}(\mathbb T)/B$ is reflective in ${\sf Cat}(\mathbb T)/B$.
\end{theorem}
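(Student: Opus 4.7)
The plan is to mimic, inside $\C$, the Grothendieck construction of the left Kan extension $\mathrm{Lan}_f(\mathbf 1)$: starting from a $\mathbb T$-functor $f:A\to B$, I construct a $\mathbb T$-category $E$ together with a perfect $\mathbb T$-functor $p:E\to B$ and a $\mathbb T$-functor $r:A\to E$ over $B$, and show that $(E,p,r)$ is the reflection unit. The object $E_0$ will be a reflexive coequalizer encoding ``equivalence of pairs $(\mathfrak a,\beta:Tf_0\mathfrak a\to b)$ under morphisms of $A$'', and then $E_1$ is forced to be the pullback that makes $p$ perfect.

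For $E_0$, form $P:=TA_0\times_{TB_0}B_1$ (via $Tf_0$ and $d_0^B$) and $Q:=TA_1\times_{TB_0}B_1$ (via $Tf_0\cdot Tc_0^A$ and $d_0^B$) in $\C$, and define $s,t:Q\rightrightarrows P$ by
\[t(\xi,\beta')=(Tc_0^A\,\xi,\,\beta'),\qquad s(\xi,\beta')=(\mu_{A_0}\cdot Td_0^A\,\xi,\;m^B(Tf_1\,\xi,\beta')).\]
The pointing of $A$, the unit law $\mu\cdot T\eta=1$, and the neutrality axiom of $B$ together imply that $Ti^A\times 1_{B_1}:P\to Q$ is a common section of $s$ and $t$, so the pair is reflexive. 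Let $q:P\twoheadrightarrow E_0:=\mathrm{coeq}(s,t)$, and let $p_0:E_0\to B_0$ be induced by $c_0^B\cdot\pi_{B_1}$, which equalizes $s,t$ thanks to the identity $c_0^B\cdot m^B=c_0^B\cdot c_1^B$ from $(3)$.

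Then set $E_1:=TE_0\times_{TB_0}B_1$ (pullback of $Tp_0$ along $d_0^B$), with projections $d_0^E:E_1\to TE_0$ and $p_1:E_1\to B_1$; by construction $p_1$ is perfect. The $\mathbb T$-category structure on $E$ is forced by requiring $p$ to be a $\mathbb T$-functor: take $i^E:=\langle\eta_{E_0},\,i^B\cdot p_0\rangle$ and $m^E:=\langle\mu_{E_0}\cdot Td_0^E\cdot d_1^E,\;m^B\cdot p_2\rangle$ into the pullback $E_1$, while $c_0^E:E_1\to E_0$ is obtained by descending the composition-style map $TP\times_{TB_0}B_1\to P$, $(\mathfrak x,\gamma)\mapsto(\mu_{A_0}\cdot T\pi_{TA_0}(\mathfrak x),\;m^B(T\pi_{B_1}(\mathfrak x),\gamma))$, through the quotient $Tq\times 1_{B_1}$. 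Define $r$ by $r_0:=q\circ\langle\eta_{A_0},\,i^B\cdot f_0\rangle$ and $r_1:=\langle Tr_0\cdot d_0^A,\,f_1\rangle$. The neutrality, associativity, and functoriality checks reduce, via the joint monicity of the pullback projections $(d_0^E,p_1)$, to the corresponding laws for $B$ together with basic monad identities.

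For the universal property, take a perfect $\mathbb T$-functor $\hat q:C\to B$ and a $\mathbb T$-functor $g:A\to C$ with $\hat q g=f$. The perfect-ness isomorphism $C_1\cong TC_0\times_{TB_0}B_1$ lifts each $(\mathfrak a,\beta)\in P$ to a unique $\widetilde\beta\in C_1$ with $d_0^C\widetilde\beta=Tg_0\,\mathfrak a$ and $\hat q_1\widetilde\beta=\beta$; set $h_0'(\mathfrak a,\beta):=c_0^C\widetilde\beta$. A computation using $Tg_0\cdot d_0^A=d_0^C\cdot g_1$ and the composition law of $C$ shows $h_0'\cdot s=h_0'\cdot t$, inducing $h_0:E_0\to C_0$; then $h_1:=Th_0\times 1_{B_1}:E_1\to C_1$ yields the unique $\mathbb T$-functor $h:E\to C$ with $h\cdot r=g$ and $\hat q\cdot h=p$. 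The decisive obstacle lies in the middle step: because $T$ is not assumed to preserve pullbacks, $TE_1$ has no direct description, and the descent defining $c_0^E$, the associativity of $m^E$, and the $\mathbb T$-functoriality of $r$ are all genuinely delicate. The standing hypotheses that $T$ preserve reflexive coequalizers and that they be pullback-stable in $\C$ are precisely what is needed: together they identify $E_1$ with $\mathrm{coeq}(TQ\times_{TB_0}B_1\rightrightarrows TP\times_{TB_0}B_1)$, so that data defined on $TP\times_{TB_0}B_1$ descend automatically to $E_1$, and all coherence checks reduce to manipulations at the level of $P$ and $B$.
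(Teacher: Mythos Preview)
Your proposal is correct and follows essentially the same route as the paper: your $P,Q,s,t,q,E_0,p_0,E_1,i^E,m^E,r$ are exactly the paper's $C,C',k,\ell,z,P_0,p_0,P_1,i^P,m^P,r$, and your descent argument for $c_0^E$ via the identification $E_1\cong\mathrm{coeq}(TQ\times_{TB_0}B_1\rightrightarrows TP\times_{TB_0}B_1)$ is precisely the paper's use of the pulled-back coequalizer $\overline z$ in Step~2. One small recalibration of emphasis: the associativity $m^E\cdot m_1^E=m^E\cdot m_2^E$ and the neutrality $m^E\cdot i_\nu^E=1$ really do reduce to the pullback projections $(d_0^E,p_1)$ and the laws in $B$ (the paper's Step~5 is short); the genuinely delicate verifications are $c_0^E\cdot i^E=1$, $c_0^E\cdot m^E=c_0^E\cdot c_1^E$, and $c_0^E\cdot r_1=r_0\cdot c_0^A$, each of which requires working through the epimorphism $\overline z$ (or its further pullback $\overline{\overline z}$) rather than through the pullback presentation of $E_1$.
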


 \begin{proof}
Given a $\mathbb T$-functor $f:A\longrightarrow B$, considered as an object of ${\sf Cat}(\mathbb T)/B$, we must find a perfect $\mathbb T$-functor $p:P\longrightarrow
B$ and a $\mathbb T$-functor $r:A\longrightarrow P$ with $f=p\cdot r$ which reflects $f$ into ${\sf Perf}(\mathbb T)/B$.

 {\em STEP 1: Defining $P_0$ and $p_0:B_0\longrightarrow P_0$\,. } \\ We start by building the back and front faces of the following commutative diagram by consecutive pullbacks in $\C$:
\begin{center}
$\xymatrix{ && TA_0\ar[d]^{Tf_0}\ar[lld]_{Ti^A}& TA_0\times_{TB_0}B_1\ar[l]_{\pi_1\quad}\ar[d]^{\pi_2}\ar@{..>}[lld]_{j\quad}\\
TA_1\ar[d]^{Tf_1}\ar@/_25pt/[dd]_{T(f_0\cdot c_0^A)} & TA_1\times_{TB_0}B_1\ar[l]^{\pi_1'\quad}\ar[d]\ar@/_25pt/[dd]^{\pi_2'} & TB_0\ar@{=}[d]\ar[lld]_{\quad\quad\quad\quad Ti^B} & B_1\ar[l]_{d_0^B}\ar@{=}[d]\ar[lld]_{i_1^B\quad}\\
TB_1 \ar[d]^{Tc_0^B}& B_2\ar[l]^{d_1^B\quad}\ar[d] & TB_0\ar@{=}[lld] & B_1\ar[l]_{d_0^B}\ar@{=}[lld]\\
TB_0 & B_1\ar[l]^{d_0^B}&&
}$	
\end{center}
here, the pullback projection $\pi_2'$ is factored as
\begin{center}$\xymatrix{TA_1\times_{TB_0}B_1\ar[rr]^{Tf_1\times 1_{B_1}\quad} & &B_2=TB_1\times_{TB_0}B_1\ar[r]^{\quad\quad\quad c_1^B} & B_1\\
}.$
\end{center}
It is then easy to see that the diagonal arrows $Ti^A$ and $i_1^B=Ti^B\times 1_{B_1} $ induce the dotted arrow $j=Ti^A\times 1_{B_1}$, determined by $\pi_1'\cdot j=Ti^A\cdot\pi_1$ and $(Tf_1\times 1_{B_1})\cdot j =i_1^B\cdot\pi_2$.

 Next, one defines the morphisms $$k=(\mu_{A_0}\cdot Td_0^A\cdot\pi_1',\;m^B\cdot (Tf_1\times 1_{B_1})),\quad\ell=(Tc_0^A\cdot\pi_1',\,c_1^B\cdot (Tf_1\times 1_{B_1}))=Tc_0^A\times 1_{B_1}$$ to render the diagram
\begin{center}
$\xymatrix{ && TA_1\ar[dd]^{Tf_1}\ar@<-0.5ex>[lld]_{\mu_{A_0}\cdot Td_0^A}\ar@<0.5ex>[lld]^{Tc_0^A} && TA_1\times_{TB_0} B_1\ar[ll]_{\pi_1'}\ar[dd]^{Tf_1\times 1_{B_1}}\ar@<-0.5ex>[ld]_{k}\ar@<0.5ex>[ld]^{\ell}\\
TA_0\ar[dd]_{Tf_0} &&& TA_0\times_{TB_0}B_1\ar[dd]_{\pi_2}\ar[lll]^{\pi_1\quad\quad\quad} &\\
& & TB_1\ar@<-0.5ex>[lld]_{\mu_{B_0}\cdot Td_0^B}\ar@<0.5ex>[lld]^{Tc_0^B} && B_2=TB_1\times_{TB_0}B_1\ar[ll]_{\quad d_1^B}\ar@<-0.5ex>[ld]_{m^B}\ar@<0.5ex>[ld]^{c_1^B}\\
TB_0 &&& B_1\ar[lll]^{d_0^B} &\\
}$	
\end{center}
commutative in the obvious sense. Now $j$ is easily seen to be a common section of the morphisms $k$ and
$\ell$.
We can therefore form the coequalizer $z:TA_0\times_{TB_0}B_1\longrightarrow P_0$ of the reflexive pair $k,\ell$. Then the solid-arrow part of the diagram
\begin{center}
$\xymatrix{TA_1\times_{TB_0}B_1\ar@<0.5ex>[r]^k\ar@<-0.5ex>[r]_{\ell}\ar[d]_{Tf_1\times 1_{B_1}} & TA_0\times_{TB_0}B_1\ar[r]^{\quad\quad z}\ar[d]^{\pi_2} & P_0\ar@{..>}[d]^{p_0}\\
B_2=TB_1\times_{TB_0}B_1\ar@<0.5ex>[r]^{\quad\quad\quad m^B}\ar@<-0.5ex>[r]_{\quad\quad\quad c_1^B} & B_1\ar[r]^{c_0^B} & B_0
}$	
\end{center}
commutes in the obvious sense and produces the morphism $p_0:P_0\longrightarrow B_0$ with $p_0\cdot z= c_0^B\cdot \pi_2$.\;\footnote{We note that, together with the sections $i^B:B_0\longrightarrow B_1$ and $i_2^B:B_1\longrightarrow B_2$, the lower row of the diagram constitutes a {\em contractible coequalizer} (as defined in \cite{BarrWells1985}); moreover, the pair $(m^B, c_1^B)$ is reflexive, as it has the common section $i_1^B:B_1\longrightarrow B_2$.}

 \medskip

 {\em STEP 2: Defining $P_1,\; p_1:P_1\longrightarrow B_1,\, d_0^P$ and $c_0^P$.}\\Aiming to make $p_0$ the object part of a perfect $\mathbb T$-functor $p:P\longrightarrow B$, one must obviously define $P_1, d_0^P$ and $p_1$ by the pullback diagram
\begin{center}
$\xymatrix{TP_0\ar[d]_{Tp_0}& P_1=TP_0\times_{TB_0} B_1\ar[l]^{d_0^P\quad\quad\quad}\ar[d]^{p_1} \\
TB_0&B_1\ar[l]_{d_0^B}\, .\\
}$	
\end{center}
However, establishing the codomain morphism $c_0^P$ turns out to be much harder. To this end, one first pulls back the $T$-image of the above coequalizer diagram along the morphism $d_0^P$. This, by hypothesis, gives us the coequalizer $\overline{z}$ of the pair $\overline{k},\overline{\ell}$, as displayed by the following commutative diagram, in which we also fix our notation for the relevant pullback projections:
\begin{center}
$\xymatrix{T(TA_1\times_{TB_0}B_1)\times_{TB_0}B_1\ar@<0.5ex>[r]^{\overline{k}}\ar@<-0.5ex>[r]_{\overline{\ell}}\ar@/^40pt/[rrr]_{\widetilde{\pi_2}'}\ar[d]_{\widetilde{\pi_1}'} & T(TA_0\times_{TB_0}B_1)\times_{TB_0}B_1\ar[r]_{\quad\quad\quad\quad \overline{z}}\ar[d]^{\widetilde{\pi_1}}\ar@/^15pt/[rr]^{\widetilde{\pi_2}} & P_1\ar[d]^{d_0^P}\ar[r]_{p_1} & B_1\ar[d]^{d_0^B}\\
T(TA_1\times_{TB_0}B_1)\ar@<0.5ex>[r]^{Tk}\ar@<-0.5ex>[r]_{T\ell}\ar@/_42pt/[rrr]^{T(c_0^B\cdot\pi_2')} & T(TA_0\times_{TB_0}B_1)\ar[r]^{\quad\quad\quad Tz}\ar@/_15pt/[rr]_{T(c_0^B\cdot \pi_2)} & TP_0\ar[r]^{Tp_0} & TB_0\\
}$	
\end{center}
Next, with the abbreviations
$$C=TA_0\times_{TB_0}B_1, \qquad D=TC\times_{TB_0}B_1,\qquad C'=TA_1\times_{TB_0}B_1,\qquad D'=TC'\times_{TB_0}B_1,$$
one has the morphisms $T\pi_2\times 1_{B_1}$ and $T\pi_2'\times 1_{B_1}$ of the commutative diagrams
\begin{center}
$\xymatrix{ & TC\ar[dd]^{T(c_0^B\cdot\pi_2)}\ar[ld]_{T\pi_2} && D\ar[ll]_{\widetilde{\pi_1}}\ar[ld]_{T\pi_2\times 1_{B_1}}\ar[dd]^{\widetilde{\pi_2}}\\
TB_1\ar[dd]_{Tc_{0}^B} && B_2\ar[ll]_{ d^B_1
\quad\quad}\ar[dd]^{c^B_1} & \\
& TB_{0}\ar@{=}[ld] && B_1\ar[ll]_{d_0^B\quad\quad\quad\quad}\ar@{=}[ld]\\
TB_{0}&& B_{1}\ar[ll]^{d^B_0} & \\
}$\quad
$\xymatrix{ & TC'\ar[dd]^{T(c_0^B\cdot\pi_2')}\ar[ld]_{T\pi_2'} && D'\ar[ll]_{\widetilde{\pi_1}'}\ar[ld]_{T\pi_2'\times 1_{B_1}}\ar[dd]^{\widetilde{\pi_2}'}\\
TB_1\ar[dd]_{Tc_{0}^B} && B_2\ar[ll]_{ d^B_1
\quad\quad}\ar[dd]^{c^B_1} & \\
& TB_{0}\ar@{=}[ld] && B_1\ar[ll]_{d_0^B\quad\quad\quad\quad}\ar@{=}[ld]\\
TB_{0}&& B_{1}\ar[ll]^{d^B_0} & \\
}$
\end{center}
Now one defines the morphisms $t,t'$, by fitting them into the commutative diagrams
\begin{center}
$\xymatrix{D\ar[rr]^{\quad T\pi_2\times 1_{B_1}}\ar[dd]_{\widetilde{\pi_1}}\ar@{..>}[rd]_t && B_2\ar[dd]_{d_1^B}\ar[rd]^{m^B} &\\
& C\ar[rr]_{\pi_2\quad\quad\quad\quad}\ar[dd]_{\pi_1} && B_1\ar[dd]_{d_0^B} \\
TC\ar[rr]^{T\pi_2\quad\quad\quad\quad\quad}\ar[rd]_{\mu_{A_0}\cdot T\pi_1} && TB_1\ar[rd]_{\mu_{B_0}\cdot Td_0^B} &\\
& TA_0\ar[rr]_{Tf_0\quad\quad\quad} && TB_0 \\
}$	\quad
$\xymatrix{D'\ar[rr]^{\quad T\pi_2'\times 1_{B_1}}\ar[dd]_{\widetilde{\pi_1'}}\ar@{..>}[rd]_{t'} && B_2\ar[dd]_{d_1^B}\ar[rd]^{m^B} &\\
& C'\ar[rr]_{\pi_2'\quad\quad\quad\quad}\ar[dd]_{\pi_1'} && B_1\ar[dd]_{d_0^B} \\
TC'\ar[rr]^{T\pi_2'\quad\quad\quad\quad\quad}\ar[rd]_{\mu_{A_0}\cdot T\pi_1'} && TB_1\ar[rd]_{\mu_{B_0}\cdot Td_0^B} &\\
& TA_1\ar[rr]_{T(f_0\cdot c_0^A)\quad\quad} && TB_0 \\
}$	
\end{center}

One routinely shows that the left part of the following diagram commutes in the obvious sense, so that we are now able to define the morphism $c_0^P$ to make also its right part commute:
\begin{center}
$\xymatrix{D'\ar@<0.5ex>[r]^{\overline{k}}\ar@<-0.5ex>[r]_{\overline{\ell}}\ar[d]_{t'} & D\ar[r]^{\overline{z}}\ar[d]^{t} & P_1\ar@{..>}[d]^{c_0^P}\\
C'\ar@<0.5ex>[r]^k\ar@<-0.5ex>[r]_{\ell} & C\ar[r]^{ z} & P_0\\
}$	
\end{center}
Finally then, the required identity $p_0\cdot c^P_0=c_0^B\cdot p_1$ follows from
$$ p_0\cdot c_0^P\cdot\overline{z}=c_0^B\cdot \pi_2\cdot t= c_0^B\cdot m^B\cdot (T\pi_2\!\times\! 1_{B_1})=c_0^B\cdot c_1^B\cdot(T\pi_2\!\times\! 1_{B_1})=c_0^B\cdot\widetilde{\pi_2}=c_0^B\cdot p_1\cdot\overline{z}.$$

 {\em STEP 3: Defining $i^P$ and $ m^P$ and showing $c_0^P\cdot i^P=1_{P_0}$.}\\
There is only one way of defining the insertion of identities $i^P$ and the composition morphism $m^P$, so that they cooperate correctly with the domain morphisms and make $p $ a $\mathbb T$-functor:
\begin{center}
$\xymatrix{
& P_{0}\ar[dd]_{p_{0}}\ar[ld]_{\eta_{P_{0}}} &&& P_{0}\ar@{=}[lll]\ar@{.>}[ld]_{i^{P}}\ar[dd]^{p_{0}}\\
TP_{0}\ar[dd]_{Tp_{0}}&&&P_{1}\ar[lll]_{\quad\quad d^{P}_{0}}\ar[dd]^{p_{1}}&\\
&B_{0}\ar@{->}[ld]^{\eta_{B_{0}}}&&&B_{0}\ar@{=}[lll]\ar@{->}[ld]^{i^{B}}\\
TB_{0}&&&B_{1}\ar[lll]^{d^{B}_{0}}}\quad
\xymatrix{&TP_{1}\ar[dd]_{Tp_{1}}\ar[ld]_{\mu_{P_{0}}\cdot Td^{P}_{0}}&&&P_{2}\ar@{->}[lll]_{d^P_{1}}\ar@{.>}[ld]_{m^{P}}\ar[dd]^{p_{2}}\\
TP_{0}\ar[dd]_{Tp_{0}}&&&P_{1}\ar[lll]_{\quad\quad d^{P}_{0}}\ar[dd]^{p_{1}}&\\
&TB_{1}\ar@{->}[ld]^{\hs{0}\mu_{B_{0}}\cdot Td^{B}_{0}}&&&B_{2}\ar@{->}[lll]_{d^B_{1}\quad}\ar@{->}[ld]^{m^{B}}\\
TB_{0}&&&B_{1}\ar[lll]^{d^{B}_{0}}
}$
\end{center}
But to see their cooperation with $c_0^P$ requires more effort. With $C=TA_0\times_{TB_0}B_1,$ we recall that the cube on the left in the diagram below shows that its top face is a pullback, so that one has $$D=TC\times_{TB_0}B_1\cong TC\times_{TB_1}B_2,$$ and this then gives the morphism $s=\eta_C\times i_2^B$ that makes the cube on the right commute:
\begin{center}
	$\xymatrix{ & TC\ar[dd]^{T(c_0^B\cdot\pi_2)}\ar[ld]_{T\pi_2} && D\ar[ll]_{\widetilde{\pi_1}}\ar[ld]_{T\pi_2\times 1_{B_1}}\ar[dd]^{\widetilde{\pi_2}}\\
TB_1\ar[dd]_{Tc_{0}^B} && B_2\ar[ll]_{ d^B_1
\quad\quad}\ar[dd]^{c^B_1} & \\
& TB_{0}\ar@{=}[ld] && B_1\ar[ll]_{d_0^B\quad\quad\quad\quad}\ar@{=}[ld]\\
TB_{0}&& B_{1}\ar[ll]^{d^B_0} & \\
}$\quad
$\xymatrix{& C\ar[dd]_{\pi_2}\ar[ld]_{\eta_{C}} && C\ar@{=}[ll]\ar@{.>}[ld]_{s}\ar[dd]^{\pi_2}\\
TC\ar[dd]_{T\pi_2}&& D\ar[ll]_{\quad\quad\quad \widetilde{\pi_1}}\ar[dd]^{T\pi_2\times 1_{B_1}}&\\
&B_{1}\ar@{->}[ld]^{\eta_{B_{1}}}&& B_{1}\ar@{=}[ll]\ar@{->}[ld]^{i_2^{B}}\\
TB_{1}&& B_{2}\ar[ll]^{d^{B}_{1}}}$
\end{center}
Since $$d_0^P\cdot\overline{z}\cdot s=Tz\cdot\widetilde{\pi_1}\cdot s=Tz\cdot\eta_C=d_0^P\cdot i^P\cdot z,$$
$$p_1\cdot\overline{z}\cdot s=\widetilde{\pi_2}\cdot s=c_1^B\cdot(T\pi_2\times1_{B_1})\cdot s=c_1^B\cdot i_2^B\cdot\pi_2=i^B\cdot c_0^B\pi_2=i^B\cdot p_0\cdot z=p_1\cdot i^P\cdot z,$$
the upper square of the next diagram commutes, and the lower square commutes by definition of $c_0^P$:
\begin{center}
	$\xymatrix{C\ar[r]^z\ar[d]_s & P_0\ar[d]^{i^P}\\
	D\ar[r]^{\overline{z}}\ar[d]_t & P_1\ar[d]^{c_0^P}\\
	C\ar[r]^z & P_0 \\
	}$
\end{center}
Consequently, from
$$\pi_1\cdot t\cdot s=\mu_{A_0}\cdot T\pi_1\cdot\widetilde{\pi_1}\cdot s=\mu_{A_0}\cdot T\pi_1\cdot\eta_C=\mu_{A_0}\cdot\eta_{TA_0}=\pi_1,$$
$$\pi_2\cdot t\cdot s=m^B\cdot (T\pi_2\times1_{B_1})\cdot s=m^B\cdot i_2^B\cdot \pi_2=\pi_2,$$
we obtain $t\cdot s=1_C$, so that the epimorphism $z$ lets us conclude $c_0^P\cdot i^P=1_{P_0}$, as desired.

 \medskip

 {\em STEP 4: Showing $c_0^P\cdot m^P=c^P_0\cdot c_1^P$}.\\
As a pullback of the coequalizer $Tz$ of the reflexive pair $Tk,T\ell$, the morphism $\overline{z}$ is a coequalizer of the reflexive pair $\overline{k},\overline{\ell}$, and $T$ preserves it. Hence, with $$p_2=Tp_1\times p_1:P_2=TP_1\times_{TP_0}P_1\longrightarrow B_2=TB_1\times_{TB_0}B_1,$$
one pulls back the coequalizer $T\overline{z}$ of the reflexive pair $T\overline{k},T\overline{\ell}$ to obtain the (regular) epimorphism $\overline{\overline{z}}$, as shown in the commutative diagram
\begin{center}
$\xymatrix{TD\times_{TB_1}B_2\ar[r]_{\quad\quad\overline{\overline{z}}}\ar@/^15pt/[rr]^{\widehat{\pi_2}}\ar[d]_{\widehat{\pi_1}} & P_2\ar[r]_{p_2}\ar[d]^{d_1^P} & B_2\ar[d]^{d_1^B} \\
TD\ar[r]^{T\overline{z}}\ar@/_15pt/[rr]_{T\widetilde{\pi_2}} & TP_1\ar[r]^{Tp_1} & TB_1\\
}$	
\end{center}
With $\pi^T_2=T\pi_2\times1_{B_1}$, the morphism $T\pi^T_2\times 1_{B_2}$ is described by the following cube on the left, and it gives us the morphism $v$ to make the cube on the right commute:

 \begin{center}
	$\xymatrix{ & TD\ar[dd]^{T\widetilde{\pi_2}}\ar[ld]_{T\pi_2^T} && TD\!\times_{TB_1}\!B_2\ar[ll]_{\widehat{\pi_1}}\ar[ld]_{T\pi_2^T\times 1_{B_2}}\ar[dd]^{\widehat{\pi_2}}\\
TB_2\ar[dd]_{Tc_{1}^B} && B_3\ar[ll]_{ d^B_2
\quad\quad}\ar[dd]^{c^B_2} & \\
& TB_{1}\ar@{=}[ld] && B_2\ar[ll]_{d_1^B\quad\quad\quad\quad}\ar@{=}[ld]\\
TB_{1}&& B_{2}\ar[ll]^{d^B_1} & \\
}$
$\xymatrix{& TD\ar[dd]_{T\pi_2^T}\ar[ld]_{\mu_C\cdot T\widetilde{\pi_1}} && TD\!\times_{TB_1}\!B_2\ar[ll]_{\widehat{\pi_1}}\ar@{.>}[ld]_{v}\ar[dd]_{T\pi_2^T\times1_{B_2}}\\
TC\ar[dd]_{T\pi_2}&& D\ar[ll]_{\quad\quad\quad\quad \widetilde{\pi_1}}\ar[dd]^{\pi_2^T}&\\
&TB_{2}\ar@{->}[ld]^{\mu_{B_{1}}\cdot Td_1^B}&& B_{3}\ar[ll]_{d_2^B\quad\quad\quad\quad}\ar@{->}[ld]^{m_2^{B}}\\
TB_{1}&& B_{2}\ar[ll]^{d^{B}_{1}}}$
\end{center}
Parallel to $v$ we have the morphism $u$ defined by the following cube:
\begin{center}
$\xymatrix{& TD\ar[dd]_{T\pi_2^T}\ar[ld]_{Tt} && TD\!\times_{TB_1}\!B_2\ar[ll]_{\widehat{\pi_1}}\ar@{.>}[ld]_{u}\ar[dd]_{T\pi_2^T\times1_{B_2}}\\
TC\ar[dd]_{T\pi_2}&& D\ar[ll]_{\quad\quad\quad\quad \widetilde{\pi_1}}\ar[dd]^{\pi_2^T}&\\
&TB_{2}\ar@{->}[ld]^{Tm^B}&& B_{3}\ar[ll]_{d_2^B\quad\quad\quad\quad}\ar@{->}[ld]^{m_1^{B}}\\
TB_{1}&& B_{2}\ar[ll]^{d^{B}_{1}}}$	
\end{center}
The right square of the diagram
\begin{center}
$\xymatrix{TD\times_{TB_1}B_2\ar@<0.5ex>[r]^{\quad v}\ar@<-0.5ex>[r]_{\quad u}\ar[d]_{\overline{\overline{z}}} & D\ar[r]^t \ar[d]^{\overline{z}}& C\ar[d]^z\\
P_2\ar@<0.5ex>[r]^{m^P}\ar@<-0.5ex>[r]_{c_1^P} & P_1\ar[r]^{c_0^P} & P_0\\
}$	
\end{center}
commutes by definition of $c_0^P$, and one routinely confirms the identities $m^P\cdot \overline{\overline{z}}=\overline {z}\cdot v$ and $c_1^P\cdot \overline{\overline{z}}=\overline{z}\cdot u$, by post-composing both sides with the pullback projections $d_0^P$ and $ p_1$ of $P_1$. Since $\overline{\overline{z}}$ is an epimorphism, it now suffices to show $t\cdot v=t\cdot u$, in order for us to conclude the desired identity $c_0^P\cdot m^P=c_0^P\cdot c_1^P$. For that, one invokes the associativity of both, the monad multiplication $\mu$ and the composition morphism $m^B$, as follows:
\begin{align*}\pi_1\cdot t\cdot u &= \mu_{A_0}\cdot T\pi_1\cdot \widetilde{\pi_1}\cdot u & \pi_2\cdot t\cdot u &= m^B\cdot \pi_2^T\cdot u\\
& =\mu_{A_0}\cdot T\pi_1\cdot Tt\cdot\widehat{\pi_1} && = m^B\cdot m_1^B\cdot (T\pi_2^T\times 1_{B_2}) \\
& = \mu_{A_0}\cdot T\mu_{A_0}\cdot TT\pi_1\cdot T\widetilde{\pi_1}\cdot\widehat{\pi_1} && = m^B\cdot m_2^B\cdot (T\pi_2^T\times 1_{B_2})\\
& = \mu_{A_0}\cdot \mu_{TA_0}\cdot TT\pi_1\cdot T\widetilde{\pi_1}\cdot\widehat{\pi_1} && =m^B\cdot\pi_2^T\cdot v\\
&=\mu_{A_0}\cdot T\pi_1\cdot \mu_C\cdot T\widetilde{\pi_1}\cdot\widehat{\pi_1} && =\pi_2\cdot t\cdot v\,.\\
&=\mu_{A_0}\cdot T\pi_1\cdot\widetilde{\pi_1}\cdot v\\
&=\pi_1\cdot t\cdot v\,,\\
\end{align*}

 {\em STEP 5: Finishing the proof that $P$ is a $\mathbb T$-category and $p:P\longrightarrow B$ a $\mathbb T$-functor.}\\ For that proof, only the verification of the three identities
$$m^P\cdot i_1^P=1_{P_0}=m^P\cdot i_2^P\;\text{ and }m^P\cdot m_1^P=m^P\cdot m_2^P$$
is still outstanding. Using the identities (10) for $p$, we confirm the last of these three identities, by post-composing both sides with the pullback projections of $P_1$:
\begin{align*}
d_0^P\cdot m^P\cdot m_2^P & = \mu_{P_0}\cdot Td_0^P\cdot d_1^P\cdot m_2^P & p_1\cdot m^P\cdot m_2^P & = m^B\cdot p_2\cdot m_2^P\\
& =\mu_{P_0}\cdot Td_0^P\cdot\mu_{P_1}\cdot Td_1^P\cdot d_2^P && =m^B\cdot m_2^B\cdot p_3\\
& =\mu_{P_0}\cdot\mu_{TP_0}\cdot TTd_0^P\cdot Td_1^P\cdot d_2^P && =m^B\cdot m_1^B\cdot p_3\\
& =\mu_{P_0}\cdot T(\mu_{P_0}\cdot Td_0^P\cdot d_1^P)\cdot d_2^P && =m^B\cdot p_2\cdot m_1^P\\\
& =\mu_{P_0}\cdot T(d_0^P\cdot m^P)\cdot d_2^P && =p_1\cdot m^P\cdot m_1^P\;.\\
& =\mu_{P_0}\cdot Td_0^P\cdot d_1^P\cdot m_1^P &&\\
& =d_0^P\cdot m^P\cdot m_1^P\;,&&\\
\end{align*}
The confirmation of the first two of the three claimed identities proceeds very similarly.

 \medskip

 {\em STEP 6: Establishing the factorization $f=r\cdot p$.}\\ With $e:=\eta_{A_0}\times i^B$ as in the cube below on the left, one defines
$$r_0:= z\cdot e\quad\text{and}\quad r_1:= Tr_0\times f_1,$$
where the defining commutativity conditions for $r_1$ are shown by the cube on the right:

 \begin{center}
$\xymatrix{& A_0\ar[dd]_{f_0}\ar[ld]_{\eta_{A_0}} && A_0\ar@{=}[ll]\ar@{.>}[ld]_{e}\ar[dd]^{f_0}\\
TA_0\ar[dd]_{Tf_0}&& C\ar[ll]_{\quad\quad\quad {\pi_1}}\ar[dd]^{\pi_2}&\\
&B_{0}\ar@{->}[ld]^{\eta_{B_{0}}}&& B_{0}\ar@{=}[ll]\ar@{->}[ld]^{i^{B}}\\
TB_{0}&& B_{1}\ar[ll]^{d^{B}_{0}}}$
\quad
$\xymatrix{& TA_0\ar@{=}[dd]\ar[ld]_{Tr_0} && A_1\ar[ll]_{d_0^A}\ar[ld]_{r_1}\ar@{=}[dd]_{}\\
TP_0\ar[dd]_{Tp_0}&& P_1\ar[ll]_{\qquad\quad d_0^P}\ar[dd]^{p_1}&\\
&TA_{0}\ar@{->}[ld]^{Tf_0}&& A_{1}\ar[ll]_{d_0^A\quad\quad}\ar@{->}[ld]^{f_1}\\
TB_{0}&& B_{1}\ar[ll]^{d^{B}_{0}}}$	
\end{center}
By definition, one has $p_1\cdot r_1=f_1$, and $p_0\cdot r_0=p_0\cdot z\cdot e=c_0^B\cdot\pi_2\cdot e=c_0^B\cdot i^B\cdot f_0= f_0$, as well as
$d_0^P\cdot r_1=Tr_0\cdot d_0^A$. In order to show $c_0^P\cdot r_1=r_0\cdot c_0^A$, we first factor
$$r_1=Tr_0\times f_1=(Tr_0\times 1_{B_1})\cdot (1_{TA_0}\times f_1)=(Tz\times 1_{B_1})\cdot(Te\times 1_{B_1})\cdot(1_{TA_0}\times f_1),$$
where $Tz\times 1_{B_1}=\overline{z}$, and where $1_{TA_0}\times f_1: A_1\longrightarrow C,\; Te\times 1_{B_1}:C\longrightarrow D$ satisfy
$$\pi_1\cdot(1_{TA_0}\times f_1)=d_0^A,\quad\pi_2\cdot(1_{TA_0}\times f_1)=f_1,$$
$$\widetilde{\pi_1}\cdot (Te\times 1_{B_1})=Te\cdot\pi_1,\quad(T\pi_2\times 1_{B_1})\cdot(Te\times 1_{B_1})=i_1^B\cdot\pi_2.$$
With the defining condition for $c_0^P$ of Step 2 we now obtain
$$c_0^P\cdot r_1= c_0^P\cdot\overline{z}\cdot(Te\times 1_{B_1})\cdot(1_{TA_0}\times f_1)=z\cdot t\cdot(Te\times 1_{B_1})\cdot(1_{TA_0}\times f_1).$$
But by post-composing $t\cdot (Te\times 1_{B_1}): C\longrightarrow C$ with the pullback projections $\pi_1,\pi_2$, one sees that this morphism must equal $1_C$, so that we have
$c_0^P\cdot r_1=z\cdot (1_{TA_0}\times f_1) $. Hence, the required compatibility of $(r_0,r_1)$ with the codomain morphisms will follow once we have shown $z\cdot (1_{TA_0}\times f_1)=r_0\cdot c_0^A$; that is: once we have confirmed the commutativity of the right square of the diagram
\begin{center}
$\xymatrix{A_2\ar@<0.5ex>[r]^{m^A}\ar@<-0.5ex>[r]_{c_1^A}\ar[d]_{1_{TA_1}\times f_1} & A_1\ar[r]^{c_0^A}\ar[d]^{1_{TA_0}\times f_1} & A_0\ar[d]^{r_0}\\
C'\ar@<0.5ex>[r]^k\ar@<-0.5ex>[r]_{\ell} & C\ar[r]^{ z} & P_0\\
}$	
\end{center}

 To this end one first establishes quite routinely that its left part commutes in the obvious sense. This then gives
$z\cdot (1_{TA_0}\times f_1)\cdot m^A= z\cdot(1_{TA_0}\times f_1)\cdot c_1^A$. In order to conclude the proof of the commutativity of the right square, we employ the contractibility of the coequalizer at the top of the diagram and must therefore just show the equality $z\cdot (1_{TA_0}\times f_1)\cdot i^A=r_0$.
But, as one easily confirms, one has $(1_{TA_0}\times f_1)\cdot i^A= e$, and the equality $z\cdot e=r_0$ holds by definition of $r_0$.

 \medskip

 {\em STEP 7: Confirming the $\mathbb T$-functoriality of $r$.}\\ In order to show $r_1\cdot i^A=i^P\cdot r_0$ and $r_1\cdot m^A=m^P\cdot r_2$, we take advantage of the fact that $p$ is perfect, so that it suffices to validate
these equalities after being post-composed by the pullback projections of $P_1=TP_0\times_{TB_0} B_1$, as follows:
\begin{align*}
d_0^P\cdot r_1\cdot i^A &=	Tr_0\cdot d_0^A\cdot i^A & p_1\cdot r_1\cdot i^A &=f_1\cdot i^A\\
&=Tr_0\cdot\eta_{A_0} & & =i^B\cdot f_0\\
&=\eta_{P_0}\cdot r_0 & &=i^B\cdot p_0\cdot r_0\\
&=d_0^P\cdot i^P\cdot r_0 &&=p_1\cdot i^P\cdot r_0\\
&&&\\
d_0^P\cdot r_1\cdot m^A &= Tr_0\cdot d_0^A\cdot m^A & p_1\cdot r_1\cdot m^A &=f_1\cdot m^A\\
&=Tr_0\cdot\mu_{A_0}\cdot Td_0^A\cdot d_1^A &&=m^B\cdot f_2\\
&=\mu_{P_0}\cdot Td_0^P\cdot Tr_1\cdot d_1^A &&=m^B\cdot p_2\cdot r_2\\
&=\mu_{P_0}\cdot Td_0^P\cdot d_1^P\cdot r_2 &&=p_1\cdot m^P\cdot r_2\\
&=d_0^P\cdot m^P\cdot d_1^P\cdot r_2 &&\\
\end{align*}

 \medskip
{\em STEP 8: Proving the universality---the existence part.}\\ For any perfect $\mathbb T$-functor $q:Q\longrightarrow B$, we need to show that morphisms $g:f\longrightarrow q$ in ${\sf Cat}(\mathbb T)/B$ correspond bijectively to morphisms $h:p\longrightarrow q$ in ${\sf Cat}(\mathbb T)/B$, via $g=h\cdot r$. To this end, given $g$, we obtain $h_0$ with the help of the diagram
\begin{center}
$\xymatrix{C'\ar@<0.5ex>[r]^k\ar@<-0.5ex>[r]_{\ell}\ar[d]_{\check{\check{g}}} & C\ar[r]^z\ar[d]^{\check{g}} & P_0\ar@{..>}[d]^{h_0}\\
Q_2\ar@<0.5ex>[r]^{m^Q}\ar@<-0.5ex>[r]_{c_1^Q} & Q_1\ar[r]_{c_0^Q} & Q_0\\
}$	
\end{center}
where $\check{g}=Tg_0\times 1_{B_1}$ and $\check{\check{g}}=Tg_1\times 1_{B_1}$; these morphisms exist since the perfect $\mathbb T$-functor $q$ makes the front faces of both diagrams below pullbacks:
\begin{center}
$\xymatrix{ & TA_0\ar[dd]^{Tf_0}\ar[ld]_{Tg_0} && C\ar[ll]_{{\pi_1}}\ar@{-->}[ld]_{\check{g}}\ar[dd]^{{\pi_2}}\\
TQ_0\ar[dd]_{Tq_0} && Q_1\ar[ll]_{ d_0^Q
\quad\quad}\ar[dd]^{q_1} & \\
& TB_{0}\ar@{=}[ld] && B_1\ar[ll]_{d_0^B\quad\quad}\ar@{=}[ld]\\
TB_{0}&& B_{1}\ar[ll]_{d^B_0} & \\
}$
$\xymatrix{ & TA_1\ar[ld]_{Tg_1}\ar[dd]^{Tf_1} && C'\ar[ll]_{\pi_1'}\ar@{-->}[ld]_{\check{\check{g}}}\ar[dd]^{Tf_1\!\times\!1_{B_1}}\\
TQ_1\ar[dd]_{Tc_0^Q} && Q_2\ar[ll]_{d_1^Q\qquad}\ar[dd]^{c_1^Q} &\\
& TB_1\ar[dd]^{Tc_0^B} && B_2\ar[ll]_{d_1^B\qquad}\ar[dd]^{c_1^B}\\
TQ_0\ar[dd]_{Tq_0} && Q_1
\ar[ll]_{d_0^Q\qquad}\ar[dd]^{q_1} &\\
& TB_0\ar@{=}[ld] && B_1\ar[ll]_{d_o^B\qquad}\ar@{=}[ld]\\
TB_0 && B_1\ar[ll]_{d_o^B} &\\
}$	
\end{center}
Checking the equalities $\check{g}\cdot k= m^Q\cdot \check{\check{g}}$ and $\check{g}\cdot\ell=c_1^Q\cdot\check{\check{g}}$ is a routine matter, and they then secure the existence of the morphism $h_0$ with $h_0\cdot z= c_0^Q\cdot\check{g}$. Since
$$q_0\cdot h_0\cdot z=q_0\cdot c_0^Q\cdot\check{g}=c_0^B\cdot q_1\cdot\check{g}=c_0^B\cdot\pi_2=p_0\cdot z,$$
$q_0\cdot h_0=p_0$ follows, so that $h_0:p_0\longrightarrow q_0$ is a morphism in $\C/B_0$. Moreover, with the easily established equality
$$\check{g}\cdot e= i^Q\cdot g_0$$
one sees that $g_0$ factors as desired: $h_0\cdot r_0=h_o\cdot z\cdot e=c_0^Q\cdot\check{g}\cdot e=c_0^Q\cdot i^Q\cdot g_0=g_0$.

 As $h_0$ should be part of a morphism $h:p\longrightarrow q$ in ${\sf Cat}(\mathbb T)/B$, we are forced to set $$h_1 = Th_0\times 1_{B_1},$$
as described by the following cube on the left:
\begin{center}
$\xymatrix{ & TP_0\ar[dd]^{Tp_0}\ar[ld]_{Th_0} && P_1\ar[ll]_{{d_0^P}}\ar@{-->}[ld]_{h_1}\ar[dd]^{p_1}\\
TQ_0\ar[dd]_{Tq_0} && Q_1\ar[ll]_{ d_0^Q
\quad\quad}\ar[dd]^{q_1} & \\
& TB_{0}\ar@{=}[ld] && B_1\ar[ll]_{d_0^B\quad\quad}\ar@{=}[ld]\\
TB_{0}&& B_{1}\ar[ll]_{d^B_0} & \\
}$
\quad
$\xymatrix{& TA_0\ar@{=}[dd]\ar[ld]_{Tg_0} && A_1\ar[ll]_{d_0^A}\ar[ld]_{g_1}\ar@{=}[dd]_{}\\
TQ_0\ar[dd]_{Tp_0}&& Q_1\ar[ll]_{\qquad\quad d_0^Q }\ar[dd]^{q_1}&\\
&TA_{0}\ar@{->}[ld]^{Tf_0}&& A_{1}\ar[ll]_{d_0^A\quad\quad}\ar@{->}[ld]^{f_1}\\
TB_{0}&& B_{1}\ar[ll]^{d^{B}_{0}}}$	

 \end{center}
By definition, one then has $h_1:p_1\longrightarrow q_1$ in $\C/B_1$ and, with the above cube on the right, $$h_1\cdot r_1=(Th_0\times 1_{b_1})\cdot(Tr_0\times f_1)=Tg_0\times f_1=g_1$$
follows.

 By definition, $h_0,h_1$ are compatible with the domain morphisms. The corresponding statement for the codomain morphisms may be shown as follows. First one defines $\hat{g}=T\check{g}\times 1_{B_2}$ to render the cube on the left commutative and then, using the discrete opfibration $q$, one shows that the two squares on the right commute:

 \begin{center}
$\xymatrix{& TC\ar[ld]_{T\check{g}}\ar[dd]^{T\pi_2} && D\ar[ll]_{\widetilde{\pi_1}}\ar@{-->}[ld]_{\hat{g}}\ar[dd]^{T\pi_2\times1_{B_1}=\pi_2^T} \\
TQ_1\ar[dd]_{Tq_1} && Q_2\ar[ll]_{d_1^Q\qquad}\ar[dd]^{q_2} &\\
& TB_1\ar@{=}[ld] && B_2\ar[ll]_{d_1^B\qquad}\ar@{=}[ld]\\
TB_1 && B_2\ar[ll]_{d_1^B}\\
}	
\hfil
\xymatrix{D\ar[d]_{\hat{g}}\ar[r]^{\overline{z}}& P_1\ar[d]^{h_1}\\
Q_2\ar[r]^{c_1^Q} & Q_1\\
D\ar[d]_{\hat{g}}\ar[r]^{t} & C\ar[d]^{\check{g}}\\
Q_2\ar[r]^{m^Q} & Q_1\\
}$
\end{center}
Now we have
$$c_0^Q\cdot h_1\cdot\overline{z}=c_0^Q\cdot c_1^Q\cdot\hat{g}=c_0^Q\cdot m^Q\cdot\hat{g}=c_0^Q\cdot \check{g}\cdot t=h_0\cdot z\cdot t=h_0\cdot c_0^P\cdot\overline{z}\;,$$
which implies the desired equality $c_0^Q\cdot h_1=h_0\cdot c_0^P$, since $\overline{z}$ is epic.

 Checking the $\mathbb T$-functoriality conditions for $h$ is a routine computation which, again, just uses the pullback $Q_1=TQ_0\times_{TB_0}B_1$. This completes the proof that every
$g:f\longrightarrow q$ in ${\sf Cat}(\mathbb T)/B$ gives a morphism $h:p\longrightarrow q$ in ${\sf Cat}(\mathbb T)/B$ with $g=h\cdot r$. It remains to be shown that such $h$ depends uniquely on $g$.
\medskip

 {\em STEP 9: Proving the universality---the uniqueness part.}\\
For any morphism $\tilde{h}:p\longrightarrow q$ in ${\sf Cat}(\mathbb T)/B$ with $g=\tilde{h}\cdot r$ one has $\tilde{h}_1=T\tilde{h}_0\times 1_{B_1}$. Therefore, it suffices to show $\tilde{h}_0=h_0$. To this end we first factor $\check{g}$ as
$$\tilde{h}_1\cdot (Tr_0\times1_{B_1})=(T\tilde{h}_0\times 1_{B_1})\cdot (Tr_0\times 1_{B_1})=Tg_0\times 1_{B_1}=\check{g}$$
and then, using $t\cdot (Te\times1_{B_1})=1_C$ as shown in Step 6, conclude
\begin{align*}
\tilde{h}_0\cdot z &=\tilde{h}_0\cdot z\cdot t\cdot (Te\times1_{B_1})\\
&=\tilde{h}_0\cdot c_0^P\cdot\overline{z}\cdot(Te\times1_{B_1})\\
&=c_0^Q\cdot\tilde{h}_1\cdot(Tz\times1_{B_1})\cdot(Te\times1_{B_1})\\
&=c_0^Q\cdot\tilde{h}_1\cdot(Tr_0\times1_{B_1})\\
&=c_0^Q\cdot\check{g}\\
&=h_0\cdot z\;.\\
\end{align*}
Indeed, since $z$ is epic, $\tilde{h}_0=h_0$ follows.
\end{proof}

\section{The comprehensive factorization system for the category of T-categories}

 We first state a general fact on (orthogonal) factorization systems (as defined in \cite{FreydKelly1972} and, less redundantly, in \cite{AHS}); for the elegant presentation of their strict cousins in terms of distributive laws, see \cite{RosebrughWood2002}. Variations of the following proposition appeared in various forms early on, in \cite{EhrbarWyler1968}, and then in \cite{Tholen1979} (Lemma 7.3), \cite{MacDonaldTholen1982} (Proposition 1.2), and \cite{DikranjanTholen1995} (Theorem 1.8); here we formulate and prove a concise and self-contained version of it, as follows:

 \begin{proposition}\label{factsystems} A class $\M$ of morphisms in a category $\A$ with pullbacks belongs to an orthogonal factorization system $(\E,\M)$ in $\A$ if, and only if,

 {\em 1.} $\M$ is closed under composition;

 {\em 2.} $\M$ is stable under pullback;

 {\em 3.} the full subcategory $\M/B$ of the slice category $\A/B$ is reflective, for every $B$ in $\A$. 	
\end{proposition}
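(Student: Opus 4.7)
The plan is to prove both implications separately. The ``only if'' direction is standard: conditions 1 and 2 are well-known properties of the right class of any orthogonal factorization system, and for condition 3, the reflection of $f:A\longrightarrow B$ into $\M/B$ is given by its $(\E,\M)$-factorization $f=m\cdot e$, with $m:M\to B$ serving as the reflection object and $e:A\to M$ as the unit; the universal property is just the diagonal-fill property $e\perp m'$ for $m':X\to B$ in $\M$, translated along the slice category $\A/B$.

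For the ``if'' direction I would define $\E:={}^\perp\M$ and show that every $f:A\longrightarrow B$ admits a factorization $f=m\cdot\eta$ with $m\in\M$ and $\eta\in\E$. By condition 3, let $\eta:f\to m$ be the reflection of $f$ into $\M/B$, so $m\eta=f$ with $m\in\M$. The main task is to verify $\eta\in\E$. Given a commutative square $m'u=v\eta$ with $m'\in\M$, I would form the pullback $P=X\times_YM$ with projections $\pi_X:P\to X$ and $\pi_M:P\to M$; condition 2 yields $\pi_M\in\M$, and the pullback property delivers a unique $u':A\to P$ with $\pi_Xu'=u$ and $\pi_Mu'=\eta$. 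Condition 1 then places $m\cdot\pi_M:P\to B$ in $\M/B$, and $u'$ becomes a morphism $f\to m\cdot\pi_M$ in $\A/B$. The universal property of the reflection $\eta$ produces a unique $h:M\to P$ in $\A/B$ with $h\eta=u'$ and $m\pi_M h=m$; the candidate diagonal will be $d:=\pi_X h$, which immediately satisfies $d\eta=\pi_X u'=u$.

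The principal obstacle is the remaining triangle $m'd=v$. Since $m'\pi_X=v\pi_M$, this reduces to proving $\pi_M h=1_M$, and this is the crux of the argument. Both $\pi_M h$ and $1_M$ are endomorphisms of $m$ in $\A/B$, and both precompose with $\eta$ to produce $\eta$ itself (for $\pi_M h$, using $\pi_M u'=\eta$). The uniqueness clause in the universal property of the reflection, applied to the canonical morphism $\eta:f\to m$ in $\A/B$, therefore forces $\pi_M h=1_M$. The uniqueness of the diagonal $d$ itself is then established symmetrically: any rival $d'$ produces $h':=\langle d',1_M\rangle:M\to P$ satisfying the same universal conditions as $h$, whence $h'=h$ and $d'=\pi_X h'=\pi_X h=d$.

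Finally, to conclude that $(\E,\M)$ is an orthogonal factorization system, essential uniqueness of $(\E,\M)$-factorizations follows by the routine two-sided argument: two factorizations $f=m_1e_1=m_2e_2$ yield, via $e_1\perp m_2$ and $e_2\perp m_1$, mutually inverse comparison morphisms between the middle objects, shown to be inverse to one another by a second application of the uniqueness of diagonal fills. Together with condition 1 and the definition of $\E$, this completes the proof.
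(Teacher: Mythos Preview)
Your proof is correct and follows essentially the same route as the paper's: both obtain the factorization from the reflection, pull back the given $\M$-morphism to land in $\M/B$, use the reflection's universal property to produce a section of the pulled-back projection, and deduce the fill-in (with uniqueness handled by the same $\langle d',1_M\rangle$ trick). The only cosmetic difference is that you spell out the final step---that factorizations with $\E={}^\perp\M$ and $\M$ yield an orthogonal factorization system---whereas the paper takes this as understood.
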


 \begin{proof}
The necessity of the three conditions for $\M$ to be part of an orthogonal factorization system is well known. For their sufficiency, let us first note that its pullback stability makes $\M$ closed under pre-composition with isomorphisms. Given a morphism $f:A\longrightarrow B$, considered as an object of $\A/B$, its reflection into $\M/B$ gives us a factorization $f=e\cdot m$ with $m\in\M$, and it now suffices to show that $e$ belongs to the left orthogonal complement of $\M$ in $\A$, here called $\E$, so that every solid-arrow commutative square
\begin{center}
$\xymatrix{A\ar[r]^u\ar[d]_e & D\ar[d]^n\\
C\ar[r]_v\ar@{-->}[ru]^w & E\\
}$	
\end{center}
with $n \in \M$ admits a unique fill-in arrow $w$. With the pullback $(n':P\rightarrow C,\, v':P\rightarrow D)$ of $(v,n)$, we have $m\cdot n'\in \M$ by conditions 1 and 2. The unique morphism $k:A\longrightarrow P$ with $n'\cdot k=e$ and $v'\cdot k= u$ constitutes a morphism $f\longrightarrow m\cdot n'$ in $\A/B$, so that the reflection property of condition 3 gives us a unique morphism $t: C\longrightarrow P$ with $t\cdot e= k$ and $m\cdot n'\cdot t=m$. In conjunction with $n'\cdot t\cdot e= n'\cdot k =e$ one concludes $n'\cdot t= 1_C$, which then shows that $w:=v'\cdot t$ makes the above diagram commute.

 For any other morphism $z:C\longrightarrow D$ with $z\cdot e=e$ and $n\cdot z=v$ one has the morphism $s:C\longrightarrow P$ with $n'\cdot s=1_C$ and $v'\cdot s= z$, and we see that $s\cdot e$ satisfies the defining conditions of $k$. So, since $s\cdot e = k$ and $m\cdot n'\cdot s=m$, we have $s=t$, which then shows $z=v'\cdot s=v'\cdot t= w$.
\end{proof}

 In order to employ Proposition \ref{factsystems} when $\A={\sf Cat}(\mathbb T)$ and $\M={\sf Perf}(\mathbb T)$, we note that $\M$ is trivially closed under composition, and that Theorem \ref{reflective} proves the reflectivity condition, provided that $\C$ has coequalizers of reflexive pairs that are stable under pullback and that are preserved by $T$. Hence, we are left with the task of having to show the existence of pullbacks in $\A$ and the pullback-stability of $\M$ --- a property that follows readily from Corollary \ref{Tpreservespbs} when $T$ preserves pullbacks, but that is much harder to establish when {\em we do {\em not} assume that $T$ preserves pullbacks}, as follows.

\begin{proposition}\label{pbstability} The category ${\sf Cat}(\mathbb T)$ has pullbacks, and the class of perfect $\mathbb T$-functors is stable under them.	
\end{proposition}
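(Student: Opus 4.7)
\emph{Plan.}  The existence of pullbacks in ${\sf Cat}(\mathbb T)$ is already guaranteed by Corollary \ref{summarytopological}, since $\C$ is finitely complete.  All the substance is in pullback stability of perfect $\mathbb T$-functors, and here the trick will be to avoid having to know anything about $T$ applied to a pullback.  My idea is, given $f:A\longrightarrow B$ and a perfect $p:P\longrightarrow B$, to construct the pullback $Q$ \emph{by hand} so that the projection $g:Q\longrightarrow A$ becomes perfect by fiat: the object of morphisms of $Q$ will be the pullback defining perfectness of $g$.

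The construction I propose is as follows.  Form, in $\C$, the two pullbacks
$$Q_0:=A_0\times_{B_0}P_0,\qquad Q_1:=TQ_0\times_{TA_0}A_1,$$
with projections $q_A,q_P$ from $Q_0$ and $d_0^Q,g_1$ from $Q_1$.  The second pullback, by construction, exhibits $g=(q_A,g_1)$ as a perfect $\mathbb T$-functor once a $\mathbb T$-category structure is in place.  Since $p$ is perfect, the comparison $\langle d_0^P,p_1\rangle:P_1\longrightarrow TP_0\times_{TB_0}B_1$ is an isomorphism, so there is a unique $f_1':Q_1\longrightarrow P_1$ with $d_0^P\cdot f_1'=Tq_P\cdot d_0^Q$ and $p_1\cdot f_1'=f_1\cdot g_1$; the compatibility condition $Tp_0\cdot Tq_P\cdot d_0^Q=d_0^B\cdot f_1\cdot g_1$ reduces to $p_0 q_P=f_0 q_A$ and the $\mathbb T$-functoriality of $f$.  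Define $c_0^Q:Q_1\longrightarrow Q_0$ by its two coordinates $c_0^A\cdot g_1$ and $c_0^P\cdot f_1'$ (compatible via $f_1 g_1=p_1 f_1'$ and the $\mathbb T$-functoriality of $f,p$).  The insertion $i^Q:Q_0\longrightarrow Q_1$ is forced via the pullback $Q_1$ by the data $\eta_{Q_0}$ and $i^A\cdot q_A$, compatible by naturality of $\eta$.  The composition $m^Q:Q_2\longrightarrow Q_1$ is likewise forced by the data $\mu_{Q_0}\cdot Td_0^Q\cdot d_1^Q$ and $m^A\cdot g_2$, with $g_2:=Tg_1\times g_1$, compatible by naturality of $\mu$.

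The verification that $(Q_0,Q_1,d_0^Q,c_0^Q,i^Q,m^Q)$ is a $\mathbb T$-category and that $g:Q\longrightarrow A$ and $f':Q\longrightarrow P$ are $\mathbb T$-functors with $fg=pf'$ now becomes a coordinate-wise exercise: each identity to be checked, after post-composition with the jointly monic pair $(d_0^Q,g_1)$, splits into a $TQ_0$-coordinate statement handled by the monad laws and an $A_1$-coordinate statement handled by the corresponding axiom in $A$; the matching identities on the $P$-side, after composition with $f_1'$, fall out automatically from perfectness of $p$, since everything reduces to assertions in the pullback $P_1\cong TP_0\times_{TB_0}B_1$.  For the universal property, given $a:X\longrightarrow A$ and $u:X\longrightarrow P$ with $fa=pu$, the pair $(a_0,u_0)$ factors uniquely through $Q_0$, and then the pair $(Ta_0\cdot d_0^X,\,a_1)$ factors uniquely through $Q_1=TQ_0\times_{TA_0}A_1$; that this assembles into a $\mathbb T$-functor is yet another coordinate-wise check.

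The main obstacle is therefore not conceptual but organizational: the many diagram chases needed to verify the neutrality, associativity, and $\mathbb T$-functoriality conditions are long but routine once one fixes the discipline of always reducing identities in $Q_1$ to their $d_0^Q$- and $g_1$-coordinates, and identities in $P_1$ to their $d_0^P$- and $p_1$-coordinates.  The essential \emph{idea} is that perfectness of $p$ presents $P_1$ entirely in terms of $TP_0$ and $B_1$, so that a $T$-image of a pullback is never called upon; in return, all the structure on $Q$ is expressed in terms of $TQ_0$ and $A_1$, which is precisely what makes the projection $g:Q\longrightarrow A$ perfect.
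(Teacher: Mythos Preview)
Your argument for pullback stability is correct and is in fact sleeker than the paper's. The paper constructs the pullback of an \emph{arbitrary} pair of $\mathbb T$-functors $f:A\to C,\ g:B\to C$ by first forming $P_0=A_0\times_{C_0}B_0$, $\widetilde{P_1}=A_1\times_{C_1}B_1$, $P_0^T=TA_0\times_{TC_0}TB_0$, and then \emph{correcting} for the failure of $T$ to preserve pullbacks by setting $P_1:=TP_0\times_{P_0^T}\widetilde{P_1}$ via the comparison $\ell_0:TP_0\to P_0^T$; seven further steps then supply the $\mathbb T$-category structure, and only at the very end is stability read off by pasting pullback squares. Your shortcut exploits that when one leg $p$ is perfect, $\widetilde{Q_1}=A_1\times_{B_1}P_1\cong A_1\times_{TB_0}TP_0$ and the $\ell_0$-correction collapses, so that the paper's $Q_1$ reduces exactly to your $TQ_0\times_{TA_0}A_1$. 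Building the pullback in this form makes perfectness of $g$ tautological and turns every axiom into the two-coordinate check you describe. This buys a much shorter argument for the part that actually matters downstream.

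There is, however, a small gap in your treatment of the \emph{first} clause. You invoke Corollary~\ref{summarytopological} for the existence of arbitrary pullbacks in ${\sf Cat}(\mathbb T)$, but that corollary (and Theorem~\ref{topologicity} on which it rests) assumes $\C$ is finitely complete---in particular that binary products exist---whereas Proposition~\ref{pbstability} is stated under the paper's standing hypothesis of pullbacks in $\C$ alone, and this is all that Theorem~\ref{compfact} assumes. The paper's explicit construction is precisely what fills this gap: it builds the pullback in ${\sf Cat}(\mathbb T)$ using only pullbacks in $\C$. Your approach can be repaired either by carrying out the general construction (reintroducing the $\ell_0$-step for the case where neither leg is perfect), or by observing that the particular limit needed in the proof of Theorem~\ref{topologicity} for $\J$ a cospan can in fact be assembled from iterated pullbacks without recourse to binary products; but as written, the citation does not cover the stated hypotheses.
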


 \begin{proof}
For $\mathbb T$-functors $f:A\longrightarrow C$ and $g:B\longrightarrow C$, initially following the construction given in Corollary \ref{Tpreservespbs}, we build the diagram below:

 \begin{center}
$\xymatrix{TP_0\ar@/_30pt/[dd]_{Tg_0'}\ar[d]_{\ell_0}\ar[rrddd]^{Tf_0'} &&& P_1\ar[lll]_{d_0^P}\ar@/_30pt/[dd]_{g_1'}\ar[d]_{\ell_0'}\ar[rrddd]^{f_1'}\ar[rrr]^{c_0^P} &&& P_0\ar@/_30pt/[dd]_{g_0'}\ar@{=}[d]\ar[rrddd]^{f_0'} &&\\
P_0^T\ar[d]_{g_0^T}\ar[rrddd]^{f_0^T} &&& \widetilde{P_1}\ar[lll]_{\widetilde{d_0}}\ar[d]_{g_1^*}\ar[rrddd]^{f_1^*}\ar[rrr]^{\widetilde{c_0}} &&& P_0 \ar[d]_{g_0'}\ar[rrddd]^{f_0'} &&\\
TA_0\ar[rrddd]_{Tf_0} &&& A_1\ar[lll]_{d_0^A}\ar[rrddd]_{f_1}\ar[rrr]^{c_0^A} &&& A_0\ar[rrddd]_{f_0} &&\\
&& TB_0\ar@{=}[d] &&& B_1\ar[lll]_{d_0^B\qquad}\ar@{=}[d]\ar[rrr]^{c_0^B\qquad} &&& B_0\ar@{=}[d]\\
&& TB_0\ar[d]^{Tg_0} &&& B_1\ar[lll]_{d_0^B\qquad}\ar[d]^{g_1}\ar[rrr]^{c_0^B\qquad} &&& B_0\ar[d]^{g_0}\\
&& TC_0 &&& C_1\ar[lll]_{d_0^C\qquad}\ar[rrr]^{c_0^C\qquad} &&& C_0\\
}$
\end{center}
As in Corollary	\ref{Tpreservespbs}, the vertical right, central and left panels of its lower storey are formed by pullbacks, so that $P_0:=A_0\times_{C_0}B_0,\; \widetilde{P_1}:=A_1\times_{C_1}B_1$ and $P_0^T:=TA_0\times_{TC_0}TB_0$, with induced morphisms $\widetilde{d_0}$ and $\widetilde{c_0}$. The only additional step to be taken now is that the comparison morphism
$$\ell_0 :TP_0=T(A_0\times_{C_0}B_0)\longrightarrow P_0^T=TA_0\times_{TC_0}TB_0$$
(which was assumed to be an isomorphism in Corollary \ref{Tpreservespbs}) needs to be pulled back along $\widetilde{d_0}$ and thereby defines $P_1:=TP_0\times_{P_0^T}\widetilde{P_1}$. The pullback comes with projections $d_0^P$ and $\ell_0'$ and gives the composite morphisms
$$c_0^P:=\widetilde{c_0}\cdot\ell_0',\quad f_1':=f_1^*\cdot\ell_0',\quad g_1':=g_1^*\cdot\ell_0'.$$
As outlined below, we will show in nine steps that the $\mathbb T$-graph $(P_0,P_1,d_0^P,c_0^P)$ carries a $\mathbb T$-category structure that makes
$f'=(f_0', f_1'):P\longrightarrow A$ and $g'=(g_0', g_1'):P\longrightarrow B$ $\mathbb T$-functors and, in fact, gives the desired pullback in ${\sf Cat}(\mathbb T)$. Then the pullback stability of ${\sf Perf}(\mathbb T)$ follows easily. Indeed, if $g$ is perfect, so that the lower left front panel of the diagram above is a pullback, also the lower left back panel is a pullback since the sides are pullbacks by design. But also the upper left back panel is a pullback, making the entire left back panel a pullback and, hence, showing that $g'$ is perfect.

 \medskip

 {\em STEP 1: Defining $i^P:P_0\longrightarrow P_1$.}\\
\begin{center}
$\xymatrix{ & A_0\ar[ld]_{i^A}\ar[dd]_{f_0} && P_0\ar[ll]_{g_0'}\ar@{-->}[ld]_{\widetilde{i}}\ar[dd]^{f_0'}\\
A_1\ar[dd]_{f_1} && \widetilde{P_1}\ar[ll]_{\qquad\quad g_1^*}\ar[dd]_{f_1^*}\\
& C_0\ar[ld]_{i^C} && B_0\ar[ll]_{\qquad\quad g_0}\ar[ld]^{i^B}\\
C_1 && B_1\ar[ll]_{g_1} &\\
}$
\qquad
$\xymatrix{& P_0\ar[ld]_{\eta_{P_0}}\ar[dd]_{\widetilde{i}} && P_0\ar@{=}[ll]\ar@{..>}[ld]_{i^P}\ar[dd]^{\widetilde{i}}\\
TP_0\ar[dd]_{\ell_0} && P_1\ar[ll]_{\qquad\quad d_0^P}\ar[dd]_{\ell_0'}\\
& \widetilde{P_1}\ar[ld]_{\widetilde{d_0}}&& \widetilde{P_1}\ar@{=}[ll]\ar@{=}[ld]\\
P_0^T && \widetilde{P_1}\ar[ll]_{\widetilde{d_0}}\\
}$	
\end{center}
For $\widetilde{i}:=i^A\times i^B$ as described by the cube on the left one easily confirms the equality $\widetilde{d_0}\cdot\widetilde{i}=\ell_0\cdot \eta_{P_0}$, so that one can put $i^P:= \eta_{P_0}\times 1_{\widetilde{P_1}}$ as described by the cube on the right. By definition, one has $d_0^P\cdot i^P=\eta_{P_0}$. Furthermore, showing first $\widetilde{c_0}\cdot\widetilde{i}=1_{P_0}$ we conclude $c_0^P\cdot i^P=\widetilde{c_0}\cdot\ell_0'\cdot i^P=\widetilde{c_0}\cdot\widetilde{i}=1_{P_0}$, as required.

 \medskip

 {\em STEP 2: Establishing the pullback $\widetilde{P_2}$ of $(f_2, g_2)$ over $C_2$ as a pullback over $P_0^T$.}
\begin{center}
$\xymatrix{TA_1\ar[dd]_{Tf_1}\ar[rd]_{Tc_0^A} && P_1^T\ar[ll]_{g_1^T}\ar[dd]^{f_1^T}\ar@{-->}[rd]^{c_0^T} &\\
& TA_0\ar[dd]^{Tf_0} && P_0^T\ar[ll]_{g_0^T\qquad\quad}\ar[dd]^{f_0^T}\\
TC_1\ar[rd]_{Tc_0^C} && TB_1\ar[ll]_{Tg_1\qquad\quad}\ar[rd]^{Tc_0^B} &\\
& TC_0 && TB_0\ar[ll]_{Tg_0}\\
}$
\qquad\qquad
$\xymatrix{A_2\ar[d]_{f_2} & \widetilde{P_2}\ar[l]_{g_2^*}\ar[d]^{f_2^*}\\
C_2 & B_2\ar[l]_{g_2}\\
P_1^T\ar[d]_{c_0^T} & \widetilde{P_2}\ar[l]_{\widetilde{d_1}}\ar[d]^{\widetilde{c_1}} \\
P_0^T & \widetilde{P_1}\ar[l]_{\widetilde{d_0}}\\
}$	
\end{center}
Like the front panel, also the back panel of the cube is a pullback by definition and, thus, determines the (suggestively, but slightly illegitimately, named) morphism $c_0^T:=Tc_0^A\times Tc_0^B$. The upper square on the right is, by definition, a pullback as well. The next goal is to establish the fact that the lower square is also a pullback (with the same pullback object $\widetilde{P_2}$), where $\widetilde{d_1},\widetilde{c_1}$ are defined to make the following diagram commutative:
\begin{center}
$\xymatrix{& A_2\ar[ld]_{d_1^A}\ar[dd]^{f_2} && \widetilde{P_2}\ar[ll]_{g_2^*}\ar@{-->}[ld]_{\widetilde{d_1}}\ar[dd]_{f_2^*}\ar@{-->}[rd]^{\widetilde{c_1}}\ar[rr]^{g_2^*} && A_2\ar[dd]_{f_2}\ar[rd]^{c_1^A} &\\
TA_1\ar[dd]_{Tf_1} && P_1^T\ar[ll]_{g_1^T\qquad\quad}\ar[dd]_{f_1^T} && \widetilde{P_1}\ar[dd]^{f_1^*}\ar[rr]^{\qquad\quad g_1^*} && A_1\ar[dd]^{f_1}\\
& C_2\ar[ld]_{d_1^C} && B_2\ar[ll]_{\qquad\quad g_2}\ar[ld]^{d_1^B}\ar[rd]_{c_1^B}\ar[rr]^{g_2\qquad\quad} && C_2\ar[rd]^{c_1^C} &\\
TC_1 && TB_1\ar[ll]_{Tg_1} && B_1\ar[rr]^{g_1} && C_1\\
}$
\end{center}
The claim that the upper small diamond of the following diagram commutes and is in fact a pullback now follows formally from the given pullback presentations of each of its four objects (drawn by dotted arrows) and the fact that all other parts of the diagram commute, with the other three small diamonds of the diagram being pullbacks as well.
\begin{center}
$\xymatrix{ &&&& \widetilde{P_2}\ar@{..>}[llld]_{g_2^*}\ar[ld]^{\widetilde{d_1}}\ar[rd]_{\widetilde{c_1}}\ar@{..>}[rrrd]^{f_2^*} &&&&\\
& A_2\ar[ld]_{d_1^A}\ar[rd]_{c_1^A}\ar@{..>}[rrrdd]^{f_2} && P_1^T\ar@{..>}[llld]_{\qquad g_1^T}\ar[rd]_{c_0^T}\ar@{..>}[rrrd]^{f_1^T\quad} && \widetilde{P_1}\ar@{..>}[llld]_{\qquad g_1^*}\ar[ld]^{\widetilde{d_0}}\ar@{..>}[rrrd]^{f_1^*\qquad} && B_2\ar@{..>}[llldd]_{g_2\;}\ar[ld]^{d_1^B}\ar[rd]^{c_1^B} &\\
TA_1\ar[rd]_{Tc_0^A}\ar@{..>}[rrrdd]^{Tf_1} && A_1\ar[ld]_{d_0^A}\ar@{..>}[rrrdd]^{f_1} && P_0^T\ar@{..>}[llld]_{g_0^T}\ar@{..>}[rrrd]^{f_0^T} && TB_1\ar@{..>}[llldd]_{\qquad Tg_1}\ar[rd]^{Tc_0^B} && B_1\ar@{..>}[llldd]_{g_1}\ar[ld]^{d_0^B}\\
& TA_0\ar@{..>}[rrrdd]_{Tf_0} &&& C_2\ar[ld]^{d_1^C}\ar[rd]_{c_1^C} &&& TB_0\ar@{..>}[llldd]^{Tg_0} &\\
&&& TC_1\ar[rd]^{Tc_0^C} && C_1\ar[ld]_{d_0^C} &&&\\
&&&& TC_0 &&&&\\
}$	
\end{center}

 \medskip

 {\em STEP 3: Comparing $\widetilde{P_2}$ with $P_2$, and $P_0^{TT}:=TTA_0\times_{TTC_0}TTB_0$ with $TTP_0$ and $P_0^T$.}\\
Taking the morphism $\ell_0':P_1\longrightarrow \widetilde{P_1}$ to the next level, one first defines the morphism $\ell_1:TP_1\longrightarrow P_1^T$ by the left cube below, then routinely confirms the equality $c_0^T\cdot \ell_1=\ell_0\cdot Tc_0^P$, and finally uses Step 2 to define $\ell_1':=\ell_1\times\ell_0':P_2\longrightarrow\widetilde{P_2}$ as the morphism fitting into the cube on the right.

 \begin{center}

 $\xymatrix{
& T\widetilde{P_1}\ar[ld]_{Tg_1^*}\ar[dd]_{Tf_1^*} && TP_1\ar[ll]_{T\ell_0'}\ar@{-->}[ld]_{\ell_1}\ar[dd]^{T\ell_o'}\\
TA_1\ar[dd]_{Tf_1} && P_1^T\ar[ll]_{\qquad\quad g_1^T}\ar[dd]_{f_1^T} &\\
& TB_1\ar[ld]_{Tg_1} && T\widetilde{P_1}\ar[ll]_{\qquad\quad Tf_1^*}\ar[ld]^{Tf_1^*}\\
TC_1 && TB_1\ar[ll]_{Tg_1} &\\
}$	
\qquad
$\xymatrix{
& TP_1\ar[ld]_{\ell_1}\ar[dd]_{Tc_0^P} && P_2\ar[ll]_{d_1^P}\ar@{..>}[ld]_{\ell_1'}\ar[dd]^{c_1^P}\\
P_1^T\ar[dd]_{c_0^T} && \widetilde{P_2}\ar[ll]_{\qquad\quad \widetilde{d_1}}\ar[dd]_{\widetilde{c_1}} &\\
& TP_0\ar[ld]_{\ell_0} && P_1\ar[ll]_{\qquad\quad d_0^P}\ar[ld]^{\ell_0'}\\
P_0^T && \widetilde{P_1}\ar[ll]_{\widetilde{d_0}} &\\
}$	
\end{center}

 One level up, the counterpart of the comparison morphism $\ell_0$ is the canonical morphism
$$\ell_0^T: TTP_0=TT(A_0\times B_0)\longrightarrow P_0^{TT}:=TTA_0\times_{TTC_0}TTB_0.$$
It turns out to cooperate well with the morphisms $$d_1^T:=Td_0^A\times Td_0^B:P_1^T\longrightarrow P_0^{TT}\quad\text{and}\quad\widetilde{\mu}:=\mu_{A_0}\times\mu_{B_0}:P_0^{TT}\longrightarrow P_0^T$$ that are described by the following commutative diagram, the front, center and back panels of which are pullbacks:
\begin{center}
$\xymatrix{&& TA_1\ar[ldd]_{Td_0^A}\ar[d]^{Tf_1} && P_1^T\ar[ll]_{g_1^T}\ar@{-->}[ldd]_{d_1^T\!}\ar[d]^{f_1^T}\\
&& TC_1\ar[ldd]^{\!\!Td_0^C} && TB_1\ar[ll]_{Tg_1\qquad}\ar[ldd]^{Td_0^B}\\
& TTA_0\ar[ldd]_{\mu_{A_0}}\ar[d]^{TTf_0} && P_0^{TT}\ar[ll]_{\qquad\quad g_0^{TT}}\ar@{-->}[ldd]_{\widetilde{\mu}}\ar[d]_{f_0^{TT}\!\!}\\
& TTC_0\ar[ldd]^{\mu_{C_0}} && TTB_0\ar[ll]_{TTg_0\quad}\ar[ldd]^{\mu_{B_0}} &\\
TA_0\ar[d]_{Tf_0} && P_0^T\ar[ll]_{\qquad g_0^T}\ar[d]_{f_0^T} &&\\
TC_0 && TB_0\ar[ll]_{Tg_0} &&\\
}$	
\end{center}
Indeed, one routinely confirms the equalities
$$d_1^T\cdot \ell_1=\ell_0^T\cdot Td_0^P\quad\text{and}\quad \widetilde{\mu}\cdot \ell_0^T=\ell_0\cdot\mu_{P_0}.$$
For later use we also note that, having obtained the morphisms
$$g_2':=Tg_1'\times g_1':P_2\longrightarrow A_2\quad\text{and}\quad f_2':= Tf_1'\times f_1':P_2\longrightarrow B_2$$
in the standard fashion, one easily confirms the equalities
$$g_2'=g_2^*\cdot\ell_1'\quad\text{and}\quad f_2'=f_2^*\cdot \ell_1'.$$

\medskip
{\em STEP 4: Defining $m^P:P_2\longrightarrow P_1$.}\\
Just like the definition of $i^P$ needed the precursor $\widetilde{i}$, we first consider the morphism $\widetilde{m}:= m^A\times m^B$ as described by the cube below on the left, whose front and back panels are pullbacks:
\begin{center}
$\xymatrix{& A_2\ar[ld]_{m^A}\ar[dd]_{f_2} && \widetilde{P_2}\ar[ll]_{g_2^*}\ar@{-->}[ld]_{\widetilde{m}}\ar[dd]^{f_2^*}\\
A_1\ar[dd]_{f_1} && \widetilde{P_1}\ar[dd]_{f_1^*}\ar[ll]_{\qquad \quad g_1^*} &\\
& C_2\ar[ld]_{m^C\!\!} && B_2\ar[ll]_{\qquad\quad g_2}\ar[ld]^{m^B}\\
C_1 && B_1\ar[ll]_{g_1}\\
}$
\quad
$\xymatrix{& TTP_0\ar[ld]_{\mu_{P_0}}\ar[dd]_{\ell_0^T} && P_2\ar[ll]_{Td_0^P\cdot d_1^P}\ar@{..>}[ld]_{m^P}\ar[dd]^{\ell_1'}\\
TP_0\ar[dd]_{\ell_0} && P_1\ar[ll]_{\qquad\quad d_0^P}\ar[dd]^{\ell_0'} &\\
& P_0^{TT}\ar[ld]_{\ell_0} && \widetilde{P_2}\ar[ll]_{d_1^T\cdot\widetilde{d_1}\qquad\quad}\ar[ld]^{\widetilde{m}}\\
P_0^T && \widetilde{P_1}\ar[ll]_{\widetilde{d_0}} &\\
}$	
\end{center}	
With the morphisms established in Step 3 one routinely shows the equality
$$\widetilde{d_0}\cdot\widetilde{m}=\widetilde{\mu}\cdot d_1^T\cdot\widetilde{d_1},$$
which then gives
$$\widetilde{d_0}\cdot\widetilde{m}\cdot\ell_1'=\widetilde{\mu}\cdot d_1^T\cdot\widetilde{d_1}\cdot\ell_1'=\widetilde{\mu}\cdot d_1^T\cdot\ell_1\cdot d_1^P=\widetilde{\mu}\cdot d_0^T\cdot Td_0^P\cdot d_1^P=\ell_0\cdot\mu_{P_0}\cdot Td_0^P\cdot d_1^P.$$
Consequently we obtain the desired morphism $m^p$ making the cube above on the right commute. By definition, it cooperates correctly with the domain morphisms. To show the corresponding statement for the codomain morphisms, one first verifies easily the equality
$$ \widetilde{c_0}\cdot\widetilde{m}=\widetilde{c_0}\cdot\widetilde{c_1} $$
and then concludes
$$c_0^P\cdot m^P=\widetilde{c_0}\cdot \ell_0'\cdot m^P=\widetilde{c_0}\cdot\widetilde{m}\cdot\ell_1'=\widetilde{c_0}\cdot\widetilde{c_1}\cdot\ell_1'=\widetilde{c_0}\cdot\ell_0'\cdot c_1^P=c_0^P\cdot c_1^P\;.$$
	
\medskip

 {\em STEP 5: Verifying the equalities $m^P\cdot i_1^P=1_{P_1}=m^P\cdot i_2^P$.}\\
Similarly to the definition of $\widetilde{i}, \widetilde{\mu}$ one first defines the auxiliary morphisms
$$ \widetilde{i_1}=i_1^A\times i_1^B, \quad\widetilde{i_2}=i_2^A\times i_2^B:\;\widetilde{P_1}\longrightarrow\widetilde{P_2},\quad \widetilde{\eta}=T\eta_{A_0}\times\ T\eta_{B_0}:P_0^T\longrightarrow P_0^{TT}.$$
With the readily verified equalities
$$\ell_1'\cdot i_1^P=\widetilde{i_1}\cdot \ell_0',\quad\ell_1'\cdot i_2^P=\widetilde{i_2}\cdot \ell_0',\quad\ell_0^T\cdot T\eta_{P_0}=\widetilde{\eta}\cdot\ell_0,\quad d_1^T\cdot\widetilde{d_1}\cdot\widetilde{i_1}=\widetilde{\eta}\cdot \widetilde{d_0}$$
one confirms that these morphisms are the correct ``approximations" of $i_1^P, i_2^P$ and $T\eta_{P_0}$,
and they also satisfy
$$\widetilde{m}\cdot\widetilde{i_1}=1_{\widetilde{P_1}}=\widetilde{m}\cdot\widetilde{i_2}\quad\text{and}\quad\widetilde{\mu}\cdot\widetilde{\eta}=1_{P_0^T}. $$
With the first set of equalities one sees that the following diagram commutes without the curved arrows, whilst the second set shows that it commutes with the curved arrows as well. Hence, with the front face being a pullback, one concludes $m^P\cdot i_1^P= 1_{P_1}$, as desired.
\begin{center}
$\xymatrix{&& TP_0\ar[ldd]_{T\eta_{P_0}}\ar@/_32
pt/[lldddd]_1\ar[d]^{\ell_0} && P_1\ar[ll]_{d_0^P}\ar[ldd]_{\!i_1^P\!}\ar[d]^{\ell_0'}\\
&& P_0^T\ar[ldd]^{\!\!\widetilde{\eta}}\ar@/^20pt/[lldddd]^1 && \widetilde{P_1}\ar[ll]_{\widetilde{d_0}\qquad}\ar[ldd]^{\widetilde{i_1}}\ar@/^20pt/[lldddd]^1\\
& TTP_0\ar[ldd]_{\mu_{P_0}}\ar[d]^{\ell_0^T} && P_2\ar[ll]_{\qquad\quad Td_0^P\cdot d_1^P}\ar[ldd]_{m^P\!}\ar[d]_{\ell_1'\!\!}\\
& P_0^{TT}\ar[ldd]^{\widetilde{\mu}} && \widetilde{P_2}\ar[ll]_{d_1^T\cdot\widetilde{d_1}\quad}\ar[ldd]^{\widetilde{m}} &\\
TP_0\ar[d]_{\ell_0} && P_1\ar[ll]_{\qquad d_0^P}\ar[d]_{\ell_0'} &&\\
P_0^T && \widetilde{P_1}\ar[ll]_{\widetilde{d_0}} &&\\
}$	
\end{center}
The other needed equality, $m^P\cdot i_2^P = 1_{P_1}$, follows more succinctly from
$$d_0^P\cdot m^P\cdot i_2^P=\mu_{P_0}\cdot Td_0^P\cdot d_1^P\cdot i_2^P=\mu_{P_0}\cdot Td_0^P\cdot\eta_{P_1}=\mu_{P_0}\cdot\eta_{TP_0}\cdot d_0^P=d_0^P,$$
$$\ell_0'\cdot m^P\cdot i_2^P= \widetilde{m}\cdot\ell_1'\cdot i_2^P=\widetilde{m}\cdot\widetilde{i_2}\cdot\ell_0'=\ell_0'\;.$$

 \medskip
{\em STEP 6: Revisiting Steps 2 and 3, one level up.}\\
Raising the indices of all objects and morphisms in the defining diagrams for $c_0^T,\,\widetilde{d_1},\,\widetilde{c_1}$ of Step 2 by 1, we define the morphisms
$$c_1^T:P_2^T\longrightarrow P_1^T,\quad \widetilde{d_2}:\widetilde{P_3}\longrightarrow P_2^T,\quad\widetilde{c_2}: \widetilde{P_3}\longrightarrow\widetilde{P_2},$$
including their domains, which come with projections $g_2^T, f_2^T$ and $g_3^*,f_3^*$, respectively.
As in Step 2 one obtains the pullback diagram
\begin{center}
$\xymatrix{P_2^T\ar[d]_{c_1^T} & \widetilde{P_2}\ar[l]_{\widetilde{d_2}}\ar[d]^{\widetilde{c_2}}\\
P_1^T & P_2^T\ar[l]_{\widetilde{d_1}}\\
}$	
\end{center}
One then defines the morphisms
$$\ell_2: TP_2\longrightarrow P_2^T\quad\text{and}\quad\ell_2':P_3\longrightarrow\widetilde{P_2}$$
just like $\ell_1, \ell_1'$ have been defined in Step 3, again by just raising all indices of the defining diagrams by 1. We record the equalities $\widetilde{d_1}\cdot \ell_2=\ell_1\cdot d_1^P$ and $c_1^T\cdot \ell_2=\ell_1\cdot Tc_1^P$; they, together with the above pullback, facilitate the definition of $\ell_2'$, which is determined by
$$\widetilde{d_2}\cdot\ell_2'=\ell_2\cdot d_2^P\quad\text{and}\quad \widetilde{c_2}\cdot\ell_2'=\ell_1'\cdot c_2^P.$$

\medskip
{\em STEP 7: Verifying the equality $m^P\cdot m_1^P=m^P\cdot m_2^P$.}
Analogously to the procedure pursued in Step 5, we first consider the auxiliary morphisms
$$ \widetilde{m_1}=m_1^A\times m_1^B, \quad\widetilde{m_2}=m_2^A\times m_2^B:\;\widetilde{P_3}\longrightarrow\widetilde{P_2},\quad m^T=Tm^A\times\ Tm^B:P_2^T\longrightarrow P_1^{T}.$$
One obtains immediately the ``approximate'' version of the desired equality, that is
$$\widetilde{m}\cdot\widetilde{m_1}=\widetilde{m}\cdot\widetilde{m_2}.$$
But one also needs to establish successively the equalities
$$d_1^T\cdot m^T\cdot\ell_2=\ell_0^T\cdot Td_0^P\cdot Tm^P,\;d_1^T\cdot m^T\cdot\widetilde{d_2} =d_1^T\cdot\widetilde{d_1}\cdot \widetilde{m_1},\;\ell_1'\cdot m_1^P=\widetilde{m_1}\cdot \ell_2',\;\ell_1'\cdot m_2^P=\widetilde{m_2}\cdot\ell_2',$$
in order to be able to derive the desired equality, as follows:
\begin{align*}
d_0^P\cdot m^P\cdot m_1^P &=\mu_{P_0}\cdot Td_0^P\cdot d_1^P\cdot m_1^P &\ell_0'\cdot m^P\cdot m_1^P&=\widetilde{m}\cdot \ell_1'\cdot m_1^P\\
&=\mu_{P_0}\cdot Td_0^P\cdot Tm^P\cdot d_2^P &&=\widetilde{m}\cdot \widetilde{m_1}\cdot\ell_2'\\
&=\mu_{P_0}\cdot \mu_{TP_0}\cdot TTd_0^P\cdot Td_1^P\cdot d_2^P &&=\widetilde{m}\cdot \widetilde{m_2}\cdot\ell_2'\\
&=\mu_{P_0}\cdot Td_0^P\cdot\mu_{P1}\cdot Td_1^P\cdot d_2^P &&=\widetilde{m}\cdot\ell_1'\cdot m_2^P\\
&=\mu_{P_0}\cdot Td_0^P\cdot d_1^P\cdot m_2^P &&=\ell_0'\cdot m^P\cdot m_2^P. \\
&=d_0^P\cdot m^P\cdot m_2^P\\
\end{align*}

 \medskip{\em STEP 8: Verifying the universal property.}\\
By design, $g'=(g_0',g_1')$ and $ f'=(f_0',f_1')$ coo
perate with the domain and codomain morphisms and satisfy $g'\cdot f=f'\cdot g$. Their $\mathbb T$-functoriality follows instantaneously as well:
$$ g_1'\cdot i^P= g_1^*\cdot l_0'\cdot i^P=g_1^*\cdot\widetilde{i}=i^A\cdot g_0'\;;$$
$$ g_1'\cdot m^P= g_1^*\cdot \ell_0'\cdot m^P=g_1^*\cdot\widetilde{m}\cdot\ell_1'=m^A\cdot g_2^*\cdot \ell_1'=m^A\cdot g_2'\;;$$
and, likewise, $f_1'\cdot i^P=i^B\cdot f_0',\;f_1'\cdot m^P=m^B\cdot f_2'$.

 \medskip
Considering $\mathbb T$-functors $s:X\longrightarrow A$ and $t:X\longrightarrow B$ with $f\cdot s= g\cdot t$ one obtains the morphisms $$u_0: X_0\longrightarrow P_0\quad\text{and}\quad u_1^*: X_1\longrightarrow\widetilde{ P_1}$$
satisfying $g_0'\cdot u_0=s_0,\; f_0'\cdot u_0=t_0$ and $g_1^*\cdot u_1^*= s_1,\;f_1^*\cdot u_1^*=t_1$.
With the pullback property of $P_0^T$ one confirms the equality $\widetilde{d_0}\cdot u_1^*==\ell_0\cdot Tu_0\cdot d_0^X$ which, with the pullback property of $P_1$, determines the morphism $$u_1:X_1\longrightarrow P_1$$
with $d_0^P\cdot u_1=Tu_0\cdot d_0^X$ and $ \ell_0'\cdot u_1= u_1^*$. One routinely confirms the equality $c_0^P\cdot u_1=u_0\cdot c_0^X$, and $g_1'\cdot u_1=s_1,\; f_1'\cdot u_1 =t_1$ holds trivially. From
$$g_1'\cdot i^P\cdot u_0=g_1^*\cdot \ell_o'\cdot i^P\cdot u_0=g_1^*\cdot \widetilde{i}\cdot u_0=i^A\cdot g_0'\cdot u_0=i^A\cdot s_0=s_1\cdot i^X=g_1'\cdot u_1\cdot i^X$$
and $f_1'\cdot i^P\cdot u_0=f_1'\cdot u_1\cdot i^X$ one concludes $u_1\cdot i^X=i^P\cdot u_0$. Finally, establishing first the equalities $g_2'\cdot u_2=s_2,\, f_2'\cdot u_2= t_2$ one similarly confirms the remaining $\mathbb T$-functoriality condition for $u$, that is: $u_1\cdot m^X=m^P\cdot u_2$.

 Any $\mathbb T$-functor $v:X\longrightarrow P$ with $g'\cdot v=g'\cdot u, \;f'\cdot u=f'\cdot v$ is easily seen to have to satisfy $\ell_0'\cdot v_1=\ell_0'\cdot u_1$ and, hence $v_1=u_1$, whilst $v_0=u_0$ holds trivially.
\end{proof}

 As already explained before Proposition \ref{pbstability}, the now confirmed pullback stability of ${\sf Perf}(\mathbb T)$, in combination with Theorem \ref{reflective} and Proposition \ref{factsystems}, gives us the principal result of this paper:

 \begin{theorem}\label{compfact}(Comprehensive Factorization Theorem) Let $\mathbb T=(T,\eta,\mu)$ be a monad on a category $\C$ with pullbacks and coequalizers of reflexive pairs, such that these coequalizers are stable under pullback and preserved by $T$. Then every $\mathbb T$-functor factors into an initial $\mathbb T$-functor followed by a perfect one.
	\end{theorem}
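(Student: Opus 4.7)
The strategy is simply to assemble the ingredients that the paper has already painstakingly prepared, by applying Proposition \ref{factsystems} to the category $\A={\sf Cat}(\mathbb T)$ and the class $\M={\sf Perf}(\mathbb T)$. The left class $\E$ of the resulting factorization system will then, by definition, consist of the $\mathbb T$-functors left-orthogonal to every perfect $\mathbb T$-functor; these deserve to be called \emph{initial} $\mathbb T$-functors, since in the classical setting $\C={\bf Set}$, $\mathbb T={\rm Id}$ they recover the initial (final) functors of \cite{StreetWalters1973}.

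The plan is to verify each of the three hypotheses of Proposition \ref{factsystems} in turn. First, condition 1 is handled by a straightforward pullback-pasting observation: given perfect $\mathbb T$-functors $f:A\to B$ and $g:B\to C$, the defining $d_0$-square for the composite $g\cdot f$ is the vertical paste of the two defining pullback squares, and is therefore itself a pullback, so $\M$ is closed under composition. Condition 2, the pullback stability of $\M$ in ${\sf Cat}(\mathbb T)$ together with the existence of such pullbacks in the first place, is exactly the content of Proposition \ref{pbstability}. Condition 3, the reflectivity of ${\sf Perf}(\mathbb T)/B$ in ${\sf Cat}(\mathbb T)/B$ for each $\mathbb T$-category $B$, is precisely Theorem \ref{reflective}, which is where the assumptions on the existence, pullback-stability, and $T$-preservation of reflexive coequalizers of $\C$ are consumed.

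With all three conditions confirmed, Proposition \ref{factsystems} yields an orthogonal factorization system $(\E,\M)$ on ${\sf Cat}(\mathbb T)$, and the universal factorization of a given $\mathbb T$-functor $f:A\to B$ is concretely realized by the reflection $f = p\cdot r$ constructed in Steps 1--7 of the proof of Theorem \ref{reflective}, with $r:A\to P$ in $\E$ and $p:P\to B$ perfect. Since the hard technical work has already been carried out in Theorem \ref{reflective} and Proposition \ref{pbstability}, no genuine obstacle remains. The only small subtlety worth double-checking is that the diagonal fill-in supplied by the orthogonality argument in Proposition \ref{factsystems} is a bona-fide $\mathbb T$-functor, but this is automatic, because that argument is executed entirely inside $\A={\sf Cat}(\mathbb T)$, using only its reflections (provided by Theorem \ref{reflective}) and its pullbacks (provided by Proposition \ref{pbstability}).
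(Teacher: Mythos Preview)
Your proposal is correct and follows exactly the paper's own route: the theorem is obtained by invoking Proposition \ref{factsystems} with $\A={\sf Cat}(\mathbb T)$ and $\M={\sf Perf}(\mathbb T)$, checking closure under composition by pullback pasting, pullback stability via Proposition \ref{pbstability}, and reflectivity via Theorem \ref{reflective}. The paper states this assembly in the paragraph immediately preceding the theorem, so your write-up matches the intended argument in both structure and detail.
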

	
	Here, by definition, a $\mathbb T$-functor is {\em initial} if it is left orthogonal to the class ${\sf Perf}(\mathbb T)$ in ${\sf Cat}(\mathbb T)$ (as defined in the proof of Proposition \ref{factsystems}).
	
	Revisiting the construction of pullbacks as in Proposition \ref{pbstability}, one sees easily that the full subcategory ${\sf Ord}(\mathbb T)$ is closed under the formation of pullbacks in ${\sf Cat}(\mathbb T)$. One also confirms immediately that, for every perfect $\mathbb T$-functor $p:P\longrightarrow B$, the domain $P$ is ordered when $B$ is ordered. Consequently, Theorem \ref{compfact} gives us also the comprehensive factorization system for ordered $\mathbb T$-categories:
	
	\begin{corollary}
	Under the hypotheses	 of Theorem {\em \ref{compfact}}, the comprehensive factorization of $\mathbb T$-functor of ordered $\mathbb T$-categories stays in the category ${\sf Ord}(\mathbb T)$.
	\end{corollary}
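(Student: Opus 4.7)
The plan is to verify the two observations sketched by the authors, and then to assemble them into the corollary. The central point is that one does not need to inspect the rather intricate construction of $P$ in Theorem \ref{reflective} step by step; it suffices to read off the orderedness of $P$ from the fact that $p:P\longrightarrow B$ is perfect, together with the orderedness of $B$.

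I would begin with the key lemma: if $p:P\longrightarrow B$ is a perfect $\mathbb T$-functor and $B$ is ordered, then $P$ is ordered. To prove it, suppose $x,y$ are parallel morphisms into $P_1$ with $d_0^P\cdot x=d_0^P\cdot y$ and $c_0^P\cdot x=c_0^P\cdot y$. Applying the $\mathbb T$-functor equalities $d_0^B\cdot p_1=Tp_0\cdot d_0^P$ and $c_0^B\cdot p_1=p_0\cdot c_0^P$ (post-composed with $x$ and $y$), one deduces $d_0^B\cdot p_1\cdot x=d_0^B\cdot p_1\cdot y$ and $c_0^B\cdot p_1\cdot x=c_0^B\cdot p_1\cdot y$. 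Joint monicity of $(d_0^B,c_0^B)$ (by orderedness of $B$) yields $p_1\cdot x=p_1\cdot y$; combined with $d_0^P\cdot x=d_0^P\cdot y$ and the pullback property defining perfectness, this forces $x=y$. Hence $(d_0^P,c_0^P)$ is jointly monic and $P$ is ordered.

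Applying this lemma to the comprehensive factorization $f=p\cdot r$ of a $\mathbb T$-functor $f:A\longrightarrow B$ in ${\sf Ord}(\mathbb T)$ provided by Theorem \ref{compfact}, one concludes at once that $P$ is ordered. Both $r:A\longrightarrow P$ and $p:P\longrightarrow B$ are then $\mathbb T$-functors in ${\sf Ord}(\mathbb T)$, since the latter is a full subcategory of ${\sf Cat}(\mathbb T)$. The universal property of the factorization is inherited verbatim, as ${\sf Perf}(\mathbb T)\cap{\sf Ord}(\mathbb T)/B$ is full in ${\sf Perf}(\mathbb T)/B$ for $B$ ordered.

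There is no real obstacle; the only part that requires a brief verification is the lemma above, and the complementary remark that ${\sf Ord}(\mathbb T)$ is closed under pullbacks in ${\sf Cat}(\mathbb T)$ can be read off from Proposition \ref{pbstability} in essentially the same way: joint monicity of the defining spans of the two factors transfers to the pullback span $(d_0^P,c_0^P)$ through the pullback projections $\ell_0',\widetilde{d_0},\widetilde{c_0}$ constructed there. Thus the corollary is a direct consequence of Theorem \ref{compfact} together with the single observation about perfect $\mathbb T$-functors preserving orderedness of the codomain.
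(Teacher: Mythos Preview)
Your proposal is correct and follows essentially the same approach as the paper: the paper likewise reduces the corollary to the two observations that (i) a perfect $\mathbb T$-functor $p:P\longrightarrow B$ with ordered codomain has ordered domain, and (ii) ${\sf Ord}(\mathbb T)$ is closed under pullbacks in ${\sf Cat}(\mathbb T)$ via Proposition~\ref{pbstability}. Your explicit verification of (i)---using that the pullback projections $(d_0^P,p_1)$ are jointly monic---spells out precisely what the paper asserts ``one confirms immediately''.
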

	
	\begin{remark}
	After being embedded into ${\sf Ord}(\mathbb T)$, every morphism in ${\sf Alg}(\mathbb T)$ is, by definition, trivially perfect. Hence, the comprehensive factorization system becomes the trivial system (isomorphisms, all morphisms) when restricted to ${\sf Alg}(\mathbb T)$.
	\end{remark}	

 \begin{examples}\label{examplefacts}
(1)	For $\mathbb T$ the identity monad on a category $\C$ with pullbacks and pullback-stable coequalizers of reflexive pairs, Theorem \ref{compfact} reproduces the known comprehensive factorization system in the category of internal categories in $\C$; see \cite{Johnstone2002}. For $\C={\bf Set}$, the notion of initial $\mathbb T$-functor, defined to be left-orthogonal to the class of perfect $\mathbb T$-functors, returns the standard notion of initial functor of ordinary (small) categories. Recall that a functor $E:\I\longrightarrow \J$ is initial if it leaves all $\J$-indexed limits in any category invariant, so that for all $D:\J\longrightarrow\A$ the limit of $D$ (if it exists) serves also as the limit of $DE$ (making the canonical ${\rm lim}D\longrightarrow {\rm lim}DE$ an isomorphism). They are characterized by the property that, for every object $j\in\J$, the comma category $E/j$ is (non-empty and) connected.

 (2) The endofunctor of the list monad $\mathbb L$ on ${\bf Set}$ preserves pullbacks and coequalizers, and they are stable under pullback. Theorem \ref{compfact} guarantees the existence of the comprehensive factorization system in ${\bf MulCat}={\sf Cat}(\mathbb L)$. It extends the comprehensive factorization system in $\bf Cat$ and has been described in \cite{BergerKaufmann2017}, as a particular example of a general environment for which the authors establish comprehensive factorization systems. That environment, however, does not seem to include Burroni's $\mathbb T$-categories.

 (3) The endofunctor of the ultrafilter monad $\mathbb U$ on ${\bf Set}$ may fail to preserve the coequalizer of a reflexive pair (and it does not preserve pullbacks either), as shown by the following example, which is due to Dirk Hofmann (private communication, November 2020):\\
\indent Consider the set $R=\{(n,m)\;|\;|n-m|\le 1\}$ of pairs of equal or adjacent natural numbers, with its projections $p_1, p_2$ to $\mathbb N$. Their coequalizer in $\bf Set$ identifies all numbers. It now suffices to show that, by contrast, the maps $Up_1, Up_2: UR\to U\mathbb N=\beta\mathbb N$ cannot identify a fixed ultrafilter with a free ultrafilter on $\mathbb N$. Indeed, if we had an ultrafilter $\mathfrak r$ on $R$ with $Up_1(\mathfrak r)=\dot{n}$ fixed and $Up_2(\mathfrak r)$ free, then we would have $p_1^{-1}(n)\in\mathfrak r$. Since $p_1^{-1}(n)$ has at most 3 elements, this would force $\mathfrak r$ to be fixed. But then also $Up_2(\mathfrak r)$ would have to be fixed.\\
\indent Nevertheless, the comprehensive factorization of a continuous map $f:X\longrightarrow Y$, known as its (antiperfect, perfect)-factorization, exists in large subcategories of ${\bf Top}={\sf Ord}(\mathbb U)$, such as the category of Tychonoff spaces. A continuous map of Tychonoff spaces is perfect if, and only if, its naturality square given by the Stone-$\rm\check{C}$ech compactification is a pullback diagram (so that its Stone-$\rm{\check{C}}$ech extension maps the remainder of the compactification of its domain into the remainder of the compactification of its codomain), and it is antiperfect (= initial) precisely when its Stone-$\rm{\check{C}}$ech extension is a homeomorphism. These facts have been established much more generally in an abstract category that comes with a distinguished class of ``closed morphisms" satisfying suitable properties: see \cite{Tholen1999} and \cite{ClementinoGiuliTholen2004}. In the settings of theses papers the construction of the factorization largely follows the lead of \cite{Ringel1970} and \cite{CassidyHebertKelly1985}, as indicated in the Introduction.
\end{examples}

 \refs
\bibitem[Ad\'amek, Herrlich, Strecker, 1990]{AHS} J. Ad\'amek, H. Herrlich, and G.E. Strecker, \emph{Abstract and Concrete Categories}, John Wiley \& Sons, New York, 1990. Republished in: Reprints in Theory and Applications of Categories 17, 2006.
\bibitem[Barr, 1970]{Barr1970} M. Barr, {\em Relational algebras}, in: Lecture Notes in Mathematics 137, pp. 39--55, Springer-Verlag, New York, 1970.
\bibitem[Barr, Wells, 1985]{BarrWells1985} M. Barr and C. Wells, \emph{Toposes, Triples and Theories}, Springer-Verlag, New York, 1985.
\bibitem[Bastiani, Ehresmann, 1969]{BastianiEhresmann1969} A. Bastiani, C. Ehresmann, {\em Cat\'egories de foncteurs structur\'es}, Cahiers de Topologie et G\'eom\'etrie Diff\'erentielle Cat\'egoriques, 11:329--384, 1969.
\bibitem[Berger, Kaufmann, 2017]{BergerKaufmann2017} C. Berger and R.M. Kaufmann, {\em Comprehensive factorisation systems}, Tbilisi Mathematical Journal, 10(3):255--277, 2017.
\bibitem[Bourbaki, 1989]{Bourbaki1989} N. Bourbaki, \emph{General Topology, Chapters 1--4}, Springer-Verlag, Berlin, 1989.
\bibitem[Burroni, 1971]{Burroni1971} A. Burroni, \emph{T-cat\'egories (cat\'egories dans un triple)}, Cahiers de Topolologie et G\'eom\'etrie Diff\'erentielle, 12: 215-321, 1971.
\bibitem[Cassidy, H\'{e}bert, Kelly, 1985]{CassidyHebertKelly1985} C. Cassidy, M. H\'{e}bert and G.M. Kelly, {\em Reflective subcategories, localizations, and factorization systems}, Journal of the Australian Mathematical Society (Ser. A), 38:387--429, 1985.
\bibitem[Clementino, Giuli, Tholen, 2004]{ClementinoGiuliTholen2004} M.M Clementino, E. Giuli and W. Tholen, {\em A Functional Approach to General Topology}, in: M.C. Pedicchio and W. Tholen (editors), {\em Categorical Foundations}, Cambridge University Press, Cambridge 2004.
\bibitem[Clementino, Hofmann, 2003]{ClementinoHofmann2002} M.M. Clementino and D. Hofmann, {\em Topological features of lax algebras}, Applied Categorical Structures, 11:267--286, 2003.
\bibitem[Clementino, Tholen 2003]{ClementinoTholen2003}	M.M. Clementino and W. Tholen, {\em Metric, topology and multicategory---a common approach}, Journal of Pure and Applied Algebra, 179:13-47, 2003.
\bibitem[Cruttwell, Shulman, 2010]{CruttwellShulman2010} G.S.H. Cruttwell and M.A. Shulman, {\em A unified framework for generalized multicategories}, Theory and Applications of Categories, 24(21):580--655, 2010.
\bibitem[Dawson, Par\'e, Pronk, 2010] {DawsonParePronk2010} R. Dawson, R. Par\'e and D. Pronk,{\em The span construction}, Theory and Applications of Categories, 24(13):301--377, 2010.
\bibitem[Dikranjan, Tholen, 1995]{DikranjanTholen1995} D. Dikranjan and W. Tholen, {\em Categorical Structure of Closure Operators}, Kluwer Academic Publishers, Dordrecht, 1995.
\bibitem[Ehrbar, Wyler, 1968]{EhrbarWyler1968} H. Ehrbar and O. Wyler, {\em On subobjects and images in categories}, Report
68-34, Carnegie Mellon University, 28 pp., 1968.
\bibitem[Engelking, 1989]{Engelking1989} R. Engelking, {\em General Topology} (2nd edition), Heldermann, Berlin, 1989.
\bibitem[Freyd, Kelly, 1972]{FreydKelly1972} P.J Freyd and G.M. Kelly, {\em Categories of continuous functors, I}, Journal of Pure and Applied Algebra, 2:169--191, 1972.
\bibitem[Giuli, Tholen, 2007]{GiuliTholen2007} E. Giuli and W. Tholen, \emph{A topologist's view of Chu spaces}, Applied Categorical Structures, 15: 573-598, 2007.
\bibitem[Gray, 1969]{Gray1969} J.W. Gray, {\em The categorical comprehension scheme}, in: Lecture Notes in Mathematics 99, pp. 242--312, Springer-Verlag, Berlin ,1969.
\bibitem[Grothendieck, 1961]{Grothendieck1961} A. Grothendieck, {\em Cat\'{e}gories Fibr\'{e}es et Descente}, S\'{e}minaire de G\'{e}om\'{e}trie Alg\'{e}brique, Institute des Hautes \'{E}tudes Scientifiques, Paris, 1961.
\bibitem[Henriksen, Isbell, 1958]{HenriksenIsbell1958} M. Henriksen and J.R. Isbell, {\em Some properties of compactifications}, Duke Mathematical Journal, 25:83--106, 1958.
\bibitem[Hermida, 2000]{Hermida2000} C. Hermida, {\em Representable multicategories}, Advances in Mathematics, 151:164--225, 2000.
\bibitem[Herrlich, 1972]{Herrlich1972} H. Herrlich, {\em A generalization of perfect maps}, in: J. Nov\'{a}k (editor), {\em General Topology and its Relations to Modern Analysis and Algebra}, Proceedings of the Third Prague Topological Symposium, 1971, pp. 187--191, Academia Publishing House of the Czechoslovak Academy of Sciences, Prague, 1972.
\bibitem[Hofmann, Seal, Tholen, 2014]{HST} D. Hofmann, G.J. Seal and W. Tholen (editors): \emph{Monoidal Topology, A Categorical Approach to Order, Metric, and Topology}, Cambridge University Press, New York 2014.	
\bibitem[Hoffmann, 1972]{Hoffmann1972} R.-E. Hoffmann, {\em Die kategorielle Auffassung der Inoitial- und Finaltopologie}, Doctoral thesis, Ruhr-Universi\"{a}t, Bochum, 1972.
\bibitem[James, 1989]{James1989} I.M. James, \emph{Fibrewise Topology}, Cambridge University Press, Cambridge, 1989.
\bibitem[Johnstone, 2002]{Johnstone2002} P.T. Johnstone, {\em Sketches of an Elephant, A Topos Theory Compendium}, Oxford University Press, Oxford, 2002.
\bibitem[Lambek, 1969]{Lambek1969} J. Lambek, {\em Deductive systems and categories, II. Standard constructions and closed categories}, in: Lecture Notes in Mathematics 86, pp. 76--122, Springer-Verlag, New York, 1969.
\bibitem[Lawvere, 1970]{Lawvere1970} F.W. Lawvere, {\em Equality in hyperdoctrines and comprehension schema as an adjoint functor}, in: A. Heller (editor), {\em Proceedings of the New York Symposium on Applications of Categorical Algebra}, pp. 1--14, American Mathematical Society, Providence RI, 1970.
\bibitem[Lawvere, 1973]{Lawvere1973} F.W. Lawvere, {\em Metric spaces, generalized logic, and closed categories}, Rendiconti Sem. Mat. Fis. Milano 43:135--166, 1973. Republished in: Reprints in Theory and Applications of Categories 1, 2002.
\bibitem[Leinster, 2004]{Leinster2004} T. Leinster, {\em Higher Operads, Higher Categories}, Cambridge University Press, Cambridge, 2004.
\bibitem[Lowen, 1997]{Lowen1997} R. Lowen, {\em Approach Spaces: The Missing Link in the Topology-Uniformity-Metric Triad}, Oxford University Press, Oxford, 1997.
\bibitem[MacDonald, Tholen, 1982]{MacDonaldTholen1982} J. MacDonald and W. Tholen, {\em Decomposition of morphisms into infinityely many factors}, in: Lecture Notes in Mathematics 962, pp. 175--189, Springer-Verlag, New York, 1982.
\bibitem[Mac Lane, 1998]{Mac Lane} S. Mac Lane, \emph{Categories for the Working Mathematician}, 2nd edition, Springer-Verlag, New York, 1998.
\bibitem[Manes, 1969]{Manes1969} E.G. Manes, {\em A triple theoretic construction of compact algebras}, in: Lecture Notes in Mathematics 80, pp. 91--118, Springer-Verlag, New York, 1969.
\bibitem[Ringel, 1970]{Ringel1970} C.M. Ringel, {\em Diagonalisierungspaare, I}, Mathematische Zeitschrift 117:249--266, 1970.
\bibitem[Rosebrugh, Wood, 2002]{RosebrughWood2002} R. Rosebrugh and R.J. Wood, {\em Distributive laws and factorization}, Journal of Pure and Applied Algebra, 175:327--353, 2002.
\bibitem[Street, Walters 1973]{StreetWalters1973} R. Street and R.F.C. Walters, {\em The comprehensive factorization of a functor}, Bulletin of the American Mathematical Society, 79(5):936--941, 1973.
\bibitem[Tholen, 1979]{Tholen1979} W. Tholen, {\em Semi-topological functors, I}, Journal of Pure and Applied Algebra, 15:53--73, 1979.
\bibitem[Tholen, 1999]{Tholen1999} W. Tholen, {\em A categorical guide to separation, compactness and perfectness}, Homology, Homotopy and Applications 1:147--161, 1999.
\bibitem[Tholen, Yeganeh, 2021]{TholenYeganeh2021} W. Tholen and L. Yeganeh, {\em ${\sf Cat}(\mathbb T)$ as a closed category} (in preparation).
\bibitem[Whyburn 1966]{Whyburn1966} G.T. Whyburn, {\em Compactification of mappings}, Mathematische Annalen, 166:168-174, 1966.
\bibitem[Zawadowski, 2011]{Zawadowski2011} M. Zawadowski, {\em Lax monoidal fibrations}, in: {\em Models, Logics, and Higher-Dimensional Categories: A Tribute to the Work of Mihaily Makkai}, CRM Proceedings and Lecture Notes 53, pp 341--426, American Mathematical Society, Providence RI, 2011.

\end{document}